\providecommand*{\Dist}[2]{\operatorname{dist}({#1};{#2})}   
\providecommand*{\Dist}[2]{\Dist{#1}{#2}}
\providecommand{\Det}{\operatorname{det}}                    
\providecommand{\argmin}{\operatorname*{argmin}}  
\newcommand{\Bc}{{\boldsymbol{c}}}
\newcommand{\Bd}{{\boldsymbol{d}}}
\newcommand{\Be}{{\boldsymbol{e}}}
\newcommand{\Bf}{{\boldsymbol{f}}}
\newcommand{\Bg}{{\boldsymbol{g}}}
\newcommand{\Bn}{{\boldsymbol{n}}}
\newcommand{\Bp}{{\boldsymbol{p}}}
\newcommand{\Bu}{{\boldsymbol{u}}}
\newcommand{\Bv}{{\boldsymbol{v}}}
\newcommand{\Bw}{{\boldsymbol{w}}}
\newcommand{\Bx}{{\boldsymbol{x}}}
\newcommand{\By}{{\boldsymbol{y}}}
\newcommand{\BA}{{\boldsymbol{A}}}
\newcommand{\BC}{{\boldsymbol{C}}}
\newcommand{\BF}{{\boldsymbol{F}}}
\newcommand{\BG}{{\boldsymbol{G}}}
\newcommand{\BH}{{\boldsymbol{H}}}
\newcommand{\BI}{{\boldsymbol{I}}}
\newcommand{\BL}{{\boldsymbol{L}}}
\newcommand{\BP}{{\boldsymbol{P}}}
\newcommand{\BU}{{\boldsymbol{U}}}
\newcommand{\BV}{{\boldsymbol{V}}}
\newcommand{\BW}{{\boldsymbol{W}}}
\newcommand{\BX}{{\boldsymbol{X}}}
\newcommand{\etabf}{\boldsymbol{\eta}}
\newcommand{\xibf}{\boldsymbol{\xi}}
\newcommand{\varphibf}{\boldsymbol{\varphi}}
\newcommand{\chibf}{\boldsymbol{\chi}}
\newcommand{\Lambdabf}{\boldsymbol{\Lambda}}
\newcommand{\Psibf}{\boldsymbol{\Psi}}
\newcommand{\Phibf}{\boldsymbol{\Phi}}
\newcommand{\Ca}{\mathcal{A}}
\newcommand{\Ce}{\mathcal{E}}
\newcommand{\Cf}{\mathcal{F}}
\newcommand{\Ci}{\mathcal{I}}
\newcommand{\Cp}{\mathcal{P}}
\newcommand{\Ct}{\mathcal{T}}
\newcommand{\Cw}{\mathcal{W}}
\newcommand{\bbI}{\mathbb{I}}
\newcommand{\bbJ}{\mathbb{J}}
\newcommand{\bbL}{\mathbb{L}}
\newcommand{\bbR}{\mathbb{R}}
\newcommand*{\N}[1]{\left\|{#1}\right\|}     
\newcommand*{\TN}[1]{\left\|\left|{#1}\right|\right\|}     
\newcommand*{\SN}[1]{\left|{#1}\right|}      
\newcommand*{\Lp}[2][\defaultdomain]{L^{#2}({#1})}
\newcommand*{\Lpv}[2][\defaultdomain]{\BL^{#2}({#1})}
\newcommand*{\NLp}[3][\defaultdomain]{\N{#2}_{\Lp[#1]{#3}}}
\newcommand*{\NLpv}[3][\defaultdomain]{\N{#2}_{\Lpv[#1]{#3}}}
\newcommand*{\Ltwo}[1][\defaultdomain]{\Lp[#1]{2}}
\newcommand*{\Ltwov}[1][\defaultdomain]{\Lpv[#1]{2}}
\newcommand*{\NLtwo}[2][\defaultdomain]{\NLp[#1]{#2}{2}}
\newcommand*{\NLtwov}[2][\defaultdomain]{\NLpv[#1]{#2}{2}}
\newcommand*{\Linf}[1][\defaultdomain]{L^{\infty}({#1})}
\newcommand*{\NLinf}[2][\defaultdomain]{\N{#2}_{\Linf[{#1}]}}
\newcommand*{\Hm}[2][\defaultdomain]{H^{#2}({#1})}
\newcommand*{\Hmv}[2][\defaultdomain]{\BH^{#2}({#1})}
\newcommand*{\Hone}[1][\defaultdomain]{\Hm[#1]{1}}
\newcommand*{\Honev}[1][\defaultdomain]{\Hmv[#1]{1}}
\newcommand*{\NHone}[2][\defaultdomain]{{\N{#2}}_{\Hone[{#1}]}}
\newcommand*{\SNHone}[2][\defaultdomain]{{\SN{#2}}_{\Hone[{#1}]}}
\newcommand*{\SNHonev}[2][\defaultdomain]{{\SN{#2}}_{\Honev[{#1}]}}
\newcommand*{\jump}[2][]{\left \llbracket{#2}\right\rrbracket_{#1}}
\newcommand*{\avg}[2][]{\left\{\hskip -3.5pt\left\{{#2}
	\right\}\hskip -3.5pt\right\}_{#1}}
\newcommand{\D}{\mathrm{d}}
\newcommand{\ol}{\overline}
\newcommand{\ul}{\underline}
\newcommand{\be}{\begin{eqnarray}}
	\newcommand{\ee}{\end{eqnarray}}
\newcommand{\ben}{\begin{eqnarray*}}
	\newcommand{\een}{\end{eqnarray*}}
\def\cdotbf{\bm{\cdot}}
\newtheorem{lemma}[subsection]{\sc Lemma}
\newtheorem{theorem}[subsection]{\sc Theorem}
\newtheorem{remark}[subsection]{\sc Remark}
\newtheorem{corollary}[subsection]{\sc Corollary}
\begin{document}

\begin{frontmatter}
	
	\title{A new framework of high-order unfitted finite element methods using ALE maps for moving-domain problems}
	
	\author[add1]{Wenhao Lu}
	\ead{luwenhao@lsec.cc.ac.cn}
	\author[add3]{Chuwen Ma\corref{cor}\fnref{ma}}
	\ead{chuwenii@lsec.cc.ac.cn}
	\author[add1,add2]{Weiying Zheng\fnref{zwy}}
	\ead{zwy@lsec.cc.ac.cn}
	\address[add1]{LSEC, NCMIS, Institute of Computational Mathematics and Scientific/Engineering Computing, Academy of Mathematics and Systems Science, Chinese Academy of Sciences, Beijing, 100190, China.}
	\address[add2]{School of Mathematical Science, University
		of Chinese Academy of Sciences, Beijing, 100049, China.}
	\address[add3]{Institute of Natural Sciences, Shanghai Jiao Tong University, Shanghai 200240, China.}
	\cortext[cor]{Corresponding author}
	
	\fntext[ma]{This author was supported in part
		by China Postdoctoral Science Foundation (No. 2023M732248) and Postdoctoral Innovative Talents Support Program (No. 20230219).}
	
	\fntext[zwy]{This author was supported in part by National Key R \& D
		Program of China 2019YFA0709600 and 2019YFA0709602
		and by the National Science Fund for Distinguished Young Scholars 11725106.}

	\begin{abstract}
 As a sequel to our previous work [C. Ma, Q. Zhang and W. Zheng, SIAM J. Numer. Anal., 60 (2022)], [C. Ma and W. Zheng, J. Comput. Phys. 469 (2022)], this paper presents a generic framework of arbitrary Lagrangian-Eulerian unfitted finite element (ALE-UFE) methods for partial differential equations (PDEs) on time-varying domains. The ALE-UFE method has a great potential in developing high-order unfitted finite element methods. The usefulness of the method is demonstrated by a variety of moving-domain problems, including a linear problem with explicit velocity of the boundary (or interface),
a PDE-domain coupled problem, and a problem whose domain has a topological change. Numerical experiments show that optimal convergence is achieved by both third- and fourth-order methods on domains with smooth boundaries, but is deteriorated to the second order when the domain has topological changes.
	\end{abstract}
	
	\begin{keyword}
		Arbitrary Lagrangian-Eulerian unfitted finite element (ALE-UFE) method,  moving domain problems,  high-order schemes.
		\vspace{2mm}
		
		{\em MSC:} \; 65M60,76R99,76T99
	\end{keyword}
\end{frontmatter}
	
 \section{Introduction}
Multiphase flows with time-varying domains or free-surface fluids have an extremely wide range of applications in science and engineering.  
Transient deformations of material regions pose a great challenge to the design of high-order numerical methods for solving such problems. In terms of the relative position of a moving domain to its partition mesh, current numerical methods can be roughly classified into two regimes. 
In body-fitted methods, the mesh is arranged to follow the moving phase, and the implementation of boundary conditions becomes easy.
In the popular arbitrary Lagrangian-Eulerian (ALE) approach \cite{hir74,don82,for99,far01,geu03,hug16},
the mesh velocity can be chosen independent of local fluid velocity.
One needs to transform the problem from a moving domain to a fixed reference domain through an arbitrary mapping and further mesh the reference domain.
The method can also be used in combination with space-time Galerkin formulations\cite{mas97,tez92}.
However, these conveniences incur the cost of mesh regeneration and data migration across the entire computational domain at each time step. Another major concern of body-fitted methods is how to maintain high accuracy in the presence of abrupt motions or large deformations of the phase \cite{rich17}.

In the other regime of unfitted methods, the mesh for the bulk phase is fixed while the moving interface is allowed to cross the fixed mesh, resulting in cut cells near the interface where the degrees of freedom are doubled with additional penalty terms or basis functions are modified to enforce the interface conditions weakly. 
Unfitted methods may encounter significant challenges for dynamic interface problems.
Since the computational domain is time-varying, 
traditional methods for time integration may not be applicable \cite{fri09,zun13}.
To overcome this difficulty, the space-time method in \cite{leh13}
combines a discontinuous Galerkin technique in time with an extended finite element method. The immersed finite element method exploits the invariant degrees of freedom and uses backward Euler for the time discretization \cite{guo21,adj19}.
Methods in \cite{leh19,lou21,wah20,bur22} extend the discrete solution at each time-step by using a ghost penalty, which enables the use of a backward differentiation formula.
Most recently, von Wahl and Richter developed  an error estimate
for this Eulerian time-stepping scheme for a PDE on a moving domain, where the
domain motion is driven by an ordinary differential equation (ODE) coupled to the
PDE \cite{von23}.
In addition, the characteristic approach is applied in \cite{ma21,ma23,maz21} to 
develop high-order unfitted finite element methods and in \cite{hansbo15} for convection-diffusion problems on time-dependent surfaces.

This paper is inspired by the ALE method, where the mesh velocity is typically derived from boundary motion, which allows the fluid to move with respect to the mesh.
We exploit the ALE map to construct a backward flow map and then approximate the time derivative, which makes it easier to obtain high-order convergence rates even for large deformations. The basic idea behind is to replace the partial time derivative with a material derivative,
\begin{equation*}
	\partial_t u(x,t_n) = \frac{\D}{\D t} u(\Ca^n(x,t),t\big) \big|_{t=t_n}
	-\Bw^n(\Bx,t_n) \cdot \nabla u(x,t_n),
\end{equation*}
where $\Ca^n(x,t)$ is an arbitrary mapping that maps $\Bx \in \Omega_{t_n}$ to 
$y = \Ca^n(x,t)\in \Omega_t$, for any $t\leq t_n$, and $\Bw^n = \partial_t \Ca^n(x,t)$ is the velocity of the moving domain.
The arbitrary feature of the algorithm arises from the fact that this application dose not follow trajectories of bulk fluid particles, but only of boundary fluid particles.
In particular, if $\Bw^n$ coincides with the fluid velocity, 
the method turns out to be the unfitted characteristic finite element (UCFE) method proposed in \cite{ma21,maz21,ma23}. Compared with the UCFE method, the proposed method -- ALE-UFE has two  advantages:
\begin{enumerate}[leftmargin=6mm]
	\item [(i)] 
	In the framework of unfitted  finite element method, it is 
	difficult to calculate the integration of the  numerical solutions from
	early time steps at the present time step, since they do not belong to the present finite element space, especially in nonlinear problems where the moving interface depends on the solution of the equation.  
	The ALE-UFE method overcomes this difficulty by choosing a simple ALE mapping to construct a backward flow map (see section~4.3).
	\item [(ii)] 
	Since the construction of the ALE map requires only the position of the moving interface and not the internal variation of the region as in the UCFE approach, 
	the ALE-UFE method can be applied to more models, 
	including those where the moving region does not maintain the same volume (see section \ref{subsec:Free boundary problems}) and the topology changes
	(see section \ref{sec:Problems with a topological change}).
\end{enumerate}

The main contributions of this work are summarized as follows.

\begin{enumerate}[leftmargin=6mm]
	\item [(a)]
	Based on the heat equation in a time-evolving domain, we propose a new framework of designing high-order unfitted finite element methods with ALE maps. We prove the stability of the numerical solution in the energy norm and establish optimal error estimates, arising from both interface-tracking algorithms and spatial-temporal discretization of the equations.
	\vspace{0.5mm}

	
	
	\item [(b)] The competitive performance of the ALE-UFE method is demonstrated by numerical experiments on largely deforming domains, including a PDE-domain coupled model, a two-fluid model with moving interface, and a problem with topologically changing domain. Optimal convergence of the method is obtained for domains with smooth boundaries or interface, but is deteriorated for domains with topological changes.
\end{enumerate}

The rest of the paper is organized as follows. In section 2, we introduce the forward and backward flow maps. In section 3, we propose the ALE-UFE method for the heat equation on a moving domain, and establish the stability and error estimates of the numerical solution. A general framework of the ALE-UFE method is presented. In section 4, we apply the ALE-UFE method to a PDE-domain coupled problem. In section 5, the ALE-UFE method is applied to a two-phase flow problem. In section 6, we present numerical results to demonstrate optimal convergence of both the third- and fourth-order methods, and apply the method to more challenging problems.

Throughout this paper, vector-valued quantities and matrix-valued quantities are denoted by boldface symbols and blackboard bold symbols, respectively, such as $\Ltwov=\Ltwo^2$ and $\bbL^\infty(\Omega)=\Linf^{2\times 2}$. The notation $f \lesssim g$ means that $f\le Cg$ holds with a constant $C>0$ independent of sensitive quantities, such as the segment size $\eta$ for interface-tracking, the spatial mesh size $h$, the time-step size $\tau$, and the number of time steps $n$. Moreover, 
we use the notations $(\cdot,\cdot)_{\Omega}$ and 
$\left<\cdot ,\cdot\right>_{\Gamma}$
to denote the inner product on $L^2(\Omega)$ and $L^2(\Gamma)$, respectively.

\section{Flow maps}\label{sec:flow maps}

In this section, we introduce the forward boundary map for interface tracking  and the backward flow map for time integration based on a given velocity field which has compact support in space. Throughout the theoretical analysis, we assume the velocity is smooth such that $\Bv\in \BC^r(\bbR^2\times [0,T])$, $r\geq k+1$ with $2\le k\le 4$ being the order of time integration.

\subsection{Forward boundary map}

Suppose the initial domain $\Omega_0\subset\bbR^2$ is bounded and has a $C^r$-smooth boundary $\Gamma_0=\partial\Omega_0$. 
The variation of the domain boundary $\Gamma_t$ has the form 
\begin{equation}\label{eq:Gammat}
	\Gamma_t = \{\BX_F(t;0,\Bx_0):\forall \Bx_0 \in \Gamma_0\},
\end{equation}
where $\BX_F$ is a forward boundary map, defined by
\begin{equation}\label{eq:X}
	\frac{\D }{\D t}\BX_F(t;s,\Bx_s)= \Bv(\BX_F(t;s,\Bx_s),t),\quad
	\BX_F(s;s,\Bx_s)=\Bx_s.
\end{equation}
The moving domain $\Omega_t$ is surrounded by
the boundary $\Gamma_t$, i.e., $\Gamma_t = \partial \Omega_t$.
For theoretical analysis, we assume that  $\Bv$ is $\BC^r$-smooth, thus $\BX_F$ is a diffeomorphism and $\Omega_t$ has same topological properties as $\Omega_0$.  Meanwhile, we also assume that $\Gamma_t$ is $C^r$ smooth for all $t\in [0,T]$.

Consider a uniform partition of the interval $[0,T]$, given by $t_n=n\tau$, $n=0,1,\cdots,N$, where $\tau=T/N$.
Denote the forward boundary maps, the transient boundaries, and the transient domains at time $t_n$, respectively, by 
\begin{equation*}
	\BX_F^{n-1,n} := \BX_F(t_n;t_{n-1},\cdot),\quad 
	\Gamma^n = \Gamma_{t_n},\quad \Omega^n = \Omega_{t_n}, \quad n>0.
\end{equation*}
The well-posedness of \eqref{eq:X} implies that 
$\BX_F^{n-1,n}$: $\Gamma^{n-1}\to \Gamma^n$ 
is one-to-one.

\subsection{Backward flow map based on the ALE map}
\label{sec:backmap}

For fixed $t_n$, we choose $\Omega^{n}$ as a reference domain and are going to define a backward flow map $\Ca^n(\cdot,t)$ and the corresponding fluid velocity $\Bw^n$ by means of ALE map: for $t\leq t_n$,
\begin{equation}\label{def At wn}
	\Ca^n(\Bx,t):\bar \Omega^n\to \bar\Omega_t,\quad 
	\Ca^n(\Bx,t_n) := \Bx,\qquad
	\Bw^n(\Bx,t) = \partial_t \Ca^n(\Bx,t).
\end{equation} 
In practice, we only need the ALE map at discrete time steps. First we construct a discrete boundary map either by 
$\Bg^{n,n-1}(\Bx) =\BX_F(t_{n-1};t_n,\Bx)$ or by the closet point mapping 
$\Bg^{n,n-1}(\Bx) = \argmin\limits_{\By\in \Gamma^{n-1}} |\By-\Bx|$ (see  \cite{hansbo15}), for all $\Bx \in \Gamma^n$.
The backward flow map is defined by the solution to the harmonic equation
\begin{equation}\label{eq:construction of X}
	-\Delta \BX^{n,n-1} = \textbf{0} \quad \text{in}\;\;\Omega^n,\qquad
	\BX^{n,n-1} = \Bg^{n,n-1} \quad \text{on}\;\;\Gamma^{n}.
\end{equation}
The maximum principle implies that $\BX^{n,n-1}$ maps $\ol{\Omega^n}$ to $\ol{\Omega^{n-1}}$.
We then define a multi-step map from $\ol{\Omega^{n}}$ to $\ol{\Omega^{n-i}}$,
$2\leq i\leq k$, by
\begin{equation}\label{eq:multi step}
	\BX^{n,n-i} = \BX^{n-i+1,n-i}\circ \cdots \circ \BX^{n,n-1}.  
\end{equation}

Let $l_n^i\in P_k([t_{n-k},t_n])$ be the basis functions of Lagrange interpolation satisfying $l_n^i(t_{n-j}) = \delta_{ij}$, with $\delta_{ij}$ the Kronecker delta. Define the semi-discrete ALE map by 
\begin{equation}\label{def At k}
	\Ca^{n}_k(\Bx,t) = \sum\limits_{i=0}^k l_n^i(t)\BX^{n,n-i}(\Bx)\quad \forall\, 
	(\Bx,t) \in \ol{\Omega^n} \times [t_{n-k},t_n].
\end{equation}
Clearly $\Ca^n(\cdotbf,t_{n-i})=\Ca^{n}_k(\cdot\,t_{n-i}) = \BX^{n,n-i}$. The artificial fluid velocity is defined by 
\begin{equation}\label{eq:wn}
	\Bw^n_k(\Bx,t) := \partial_t \Ca^n_k(t,\Bx)
	= \sum_{i=0}^k (l_n^i)'(t)\BX^{n,n-i}(\Bx).
\end{equation}

\begin{remark}
	The construction of the ALE map is not unique. For example, one can represent the motion of a domain by considering the domain as elastic or viscoelastic solid, and solve the problem by resorting to the equations of elastic dynamics.  
\end{remark}

\section{The ALE-UFE method}

The purpose of this section is to propose a high-order finite element method for solving PDEs on time-moving domains based on ALE map. For clearness, we first focus on the heat equation and will extend the result to more complex problems in 
sections \ref{sec:nonlinear}--\ref{sec:num}. 

\subsection{The heat equation on a moving domain}

Based on the forward boundary map and backward flow map presented in the previous section, we now design the numerical scheme for solving the heat equation on a time-evolving domain 
\begin{align}\label{cd-model}
	\frac{\partial u}{\partial t} - \Delta u  = f\quad \text{in}\;\; \Omega_t, \qquad
	u =  0 \quad \text{on}\;\;\Gamma_t, \qquad
	u(\cdot,0) =  u_0  \quad \text{in}\;\;\Omega_0, 
\end{align}	 
where $\Omega_t\subset \bbR^2$ is the time-varying domain, $\Gamma_t = \partial \Omega_t$
the moving boundary defined  in \eqref{eq:Gammat},
$u(\Bx,t)$ the tracer transported by the fluid, $u_0$ the initial value, and $f(\Bx,t)$ the source term distributed in $\bbR^2$ and having a compact support.

By the chain rule and \eqref{def At wn}, the first equation of \eqref{cd-model} can be written as
\begin{equation} \label{cd-eqn1}
	\frac{\D }{\D t} u(\Ca^n(\Bx,t), t) \Big|_{t=t_n} -\Bw^n(\Bx,t_n)\cdot \nabla u(\Bx,t_n) - \Delta u(\Bx,t_n) = f(t_n).
\end{equation}
From \eqref{cd-eqn1}, $\Ca^n(\Bx,t)$ is not necessarily differentiable in $\Bx$. This inspires us to construct a  discrete ALE map in practice.
The semi-discrete scheme is given by
\begin{equation}\label{eq:opr-BDF2}
	\frac{1}{\tau}\Lambda^k_{\Bw} \ul\BU^n  - \Delta u^n = f^n,
\end{equation}
where $u^n$, $f^n$ denote $u(\Bx,t_n)$ and $f(\Bx,t_n)$, respectively, and $\tau^{-1}\Lambda_{\Bw}^k$ stands for the $k^{\text{th}}$-order time finite difference operator, given by 
\begin{equation}\label{eq:opr-BDF}
	\frac{1}{\tau}\Lambda_{\Bw}^k\ul{\BU}^n
	= \frac{1}{\tau }\Lambda^k \ul{\BU}^n 
	-\Bw_k^n(\Bx,t_n)\cdot \nabla U^n\quad \hbox{with}\;\;
	\Lambda^k \ul{\BU}^n = \sum_{i=0}^k \lambda_i^k U^{n-i,n}.
\end{equation} 
Here $\tau^{-1} \Lambda^k$ denotes the BDF-$k$ finite difference operator defined by (cf. \cite{liu13}) and
\begin{equation*}
	\ul \BU=[U^{n-k},\cdots,U^n],\quad 
	U^{n-i} = u(\Ca^n_k(\Bx,t_{n-i}),t_{n-i}),\quad 
	0\leq i\leq k.
\end{equation*}
The coefficients $\lambda_i^k$ for $k=2,3,4$ are listed in Table~\ref{tab:BDF}.

\begin{table}[htbp]
	\caption{\small Coefficients of $\lambda_i^k$ in ${\rm BDF}$ schemes.}\label{tab:BDF} \vspace{-3mm}
	\center 
	\setlength{\tabcolsep}{11mm}
	\begin{tabular}{ c|ccccc}
		\toprule[1pt]
		\diagbox{$k$}{$\lambda_i^k$}{$i$}  
		&$0$      & $1$     &$2$    & $3$   & $4$  \\  \hline   
		$2$      &$3/2 $   & $-2$  & $1/2$ & $0$    & $0$ \\ 
		$3$      &$11/6$   & $-3$  & $3/2$ & $-1/3$ & $0$ \\
		$4$      &$25/12$  & $-4$  & $3$   & $-4/3$ &$1/4$ \\ 
		\bottomrule[1pt]
	\end{tabular}
\end{table}

\subsection{Interface-tracking approximation}

Even the velocity of $\Gamma_t$ is known explicitly, we still need to approximate it with an approximate boundary due to computational complexity in practice. The approximate boundary $\Gamma^n_\eta$ can be constructed by either explicit algorithms such as front-tracking methods \cite{li03} and cubic MARS (Mapping and Adjusting Regular Semi-analytic sets) methods \cite{zha18}, or implicit algorithms such as level set methods \cite{ale99}. The domain enclosed by $\Gamma^n_\eta$ is denoted by $\Omega_\eta^n$ with $\eta$ being the parameter of approximation.

Let $\chibf_n(l)$ and $\hat\chibf_n(l)$, $l\in [0,L]$, denote the parametric representations of $\Gamma_\eta^n$ and $\Gamma^n$, respectively. In order to get optimal error estimates, we make an assumption that the approximation of the boundary is of high order in the sense that
\begin{equation}\label{eq:chi_eta}
	\SN{\chibf_n - \hat\chibf_n}_{C^\mu([0,L])} \lesssim \tau^{k+1-\mu},\quad \mu =0,1.
\end{equation}
For a rigorous proof of the error estimate, we refer to \cite[section~3.2]{ma21} where the cubic MARS method is used for interface tracking \cite{zha18}.
Using \eqref{eq:chi_eta}, we have the error estimate between the exact domain and the approximate domain
\begin{equation*}
	\text{area}(\Omega^n\backslash\Omega_\eta^n)
	+\text{area}(\Omega_\eta^n\backslash\Omega^n)=O(\tau^{k+1}).
\end{equation*}

\begin{remark}
	Assumption \eqref{eq:chi_eta} is used only for analysis. In practice, we require the Hausdorff distance between $\Gamma_\eta^n$ and $\Gamma^n$ to be small, i.e., 
	$\text{dist}(\Gamma_\eta^n,\Gamma^n) \lesssim \tau^{k+1}$.
\end{remark}

\subsection{Finite element spaces}
\label{sec:Finite element spaces and some bilinear forms}

First we define a $\delta$-neighborhood of $\Omega_\eta^n$ by
\begin{equation} \label{eq:delta omega}
	\Omega_{\delta}^n = :\big\{\Bx \in \bbR^2: \min_{\By\in \Omega_\eta^n}|\Bx -\By| \leq 0.5\tau\big\}.
\end{equation}
It is easy to see $\Omega_\eta^n \subset \Omega_\delta^n$ (see Fig.~\ref{fig:mesh}).
Let $D$ be an open square satisfying $\Omega_t\cup\Omega^n_\delta \subset D$ 
for all $0\le t\le T$ and $0\le n\le N$.
Let $\Ct_h$ be the uniform partition of $\bar D$ into 
closed squares of side-length $h$. 
It generates two covers of $\Omega^n_{\delta}$ and $\Gamma^n_{\eta}\cup \partial \Omega_\delta^n$, respectively,  
\begin{align}\label{eq:Cth}
	\Ct^n_h &:= \left\{K\in\Ct_h:\; 
	\ol K\cap\Omega^n_{\delta}\neq \emptyset \right\}, \\
	\Ct^n_{h,B} &:= \left\{K\in \Ct^n_h:\; 
	\ol K\cap\Gamma^n_{\eta}\neq \emptyset \;\;\text{or}\;\;
	\ol K\cap\partial \Omega_\delta^n\neq \emptyset \right\} . 
\end{align}
The cover $\Ct^n_h$ generates a fictitious domain
$\tilde\Omega^n:= \mathrm{interior}\big(\cup_{K\in\Ct^n_h}K\big)$.
Let $\Ce_h$ be the set of all edges in $\Ct_h$. The set of interior edges of boundary elements are denoted by
\begin{equation}\label{eq:Ceb}
	\Ce_{h,B}^{n}= \big\{E\in\Ce_h: \; 
	\exists K\in \Ct^n_{h,B}\;\;
	\hbox{s.t.}\;\; E\subset\partial K
	\backslash \partial \tilde\Omega^n\big\}.
\end{equation}

\begin{figure}[t]
	\centering
	\begin{tikzpicture}[scale =1.5]
		\filldraw[red](0.2*2,0.2*2)--(0.2*8,0.2*2)--(0.2*8,0.2*3)--(0.2*9,0.2*3)--(0.2*9,0.2*7)
		--(0.2*8,0.2*7)--(0.2*8,0.2*8)--(0.2*2,0.2*8)--(0.2*2,0.2*7)--(0.2*1,0.2*7)--(0.2*1,0.2*3)
		--(0.2*2,0.2*3)--(0.2*2,0.2*2);
		
		\filldraw[white](0.2*3,0.2*6)--(0.2*4,0.2*6)--(0.2*4,0.2*7)--(0.2*5,0.2*7)--(0.2*6,0.2*7)--(0.2*6,0.2*6)
		--(0.2*7,0.2*6)--(0.2*7,0.2*5)--(0.2*7,0.2*4)--(0.2*6,0.2*4)--(0.2*6,0.2*3)--(0.2*5,0.2*3)--(0.2*4,0.2*3)
		--(0.2*4,0.2*4)--(0.2*3,0.2*4)--(0.2*3,0.2*5)--(0.2*3,0.2*6); 
		
		\filldraw[yellow](0.2*3,0.2*6)--(0.2*4,0.2*6)--(0.2*4,0.2*7)--(0.2*5,0.2*7)--(0.2*6,0.2*7)--(0.2*6,0.2*6)
		--(0.2*7,0.2*6)--(0.2*7,0.2*5)--(0.2*7,0.2*4)--(0.2*6,0.2*4)--(0.2*6,0.2*3)--(0.2*5,0.2*3)--(0.2*4,0.2*3)
		--(0.2*4,0.2*4)--(0.2*3,0.2*4)--(0.2*3,0.2*5)--(0.2*3,0.2*6);

		\draw [step =0.2cm,gray,thin] (0,0) grid (2cm,2cm);
		\draw[black, thick] (1,1) ellipse [x radius=0.56cm, y radius=0.5cm];
		\draw[dashed,green,ultra thick] (1,1) ellipse [x radius=0.64cm, y radius=0.56cm];
		
		\draw[red,thick](0.2*2,0.2*2)--(0.2*8,0.2*2)--(0.2*8,0.2*3)--(0.2*9,0.2*3)--(0.2*9,0.2*7)
		--(0.2*8,0.2*7)--(0.2*8,0.2*8)--(0.2*2,0.2*8)--(0.2*2,0.2*7)--(0.2*1,0.2*7)--(0.2*1,0.2*3)
		--(0.2*2,0.2*3)--(0.2*2,0.2*2);
		
		\node[left] at (0.3,1.7) {$D$};
		\node[left] at (1.2,0.2*5) {$\Omega_\eta^n$};
		\node[right] at (1.4,0.2*7.35){$\Omega_{\delta}^n$};
	\end{tikzpicture}
	\qquad
	\begin{tikzpicture}[scale =1.8]
		\filldraw[red](0.2*2,0.2*2)--(0.2*8,0.2*2)--(0.2*8,0.2*3)--(0.2*9,0.2*3)--(0.2*9,0.2*7)
		--(0.2*8,0.2*7)--(0.2*8,0.2*8)--(0.2*2,0.2*8)--(0.2*2,0.2*7)--(0.2*1,0.2*7)--(0.2*1,0.2*3)
		--(0.2*2,0.2*3)--(0.2*2,0.2*2);			
		\filldraw[yellow](0.2*3,0.2*6)--(0.2*4,0.2*6)--(0.2*4,0.2*7)--(0.2*5,0.2*7)--(0.2*6,0.2*7)--(0.2*6,0.2*6)
		--(0.2*7,0.2*6)--(0.2*7,0.2*5)--(0.2*7,0.2*4)--(0.2*6,0.2*4)--(0.2*6,0.2*3)--(0.2*5,0.2*3)--(0.2*4,0.2*3)
		--(0.2*4,0.2*4)--(0.2*3,0.2*4)--(0.2*3,0.2*5)--(0.2*3,0.2*6);			
		\draw[red,thick](0.2*2,0.2*2)--(0.2*8,0.2*2)--(0.2*8,0.2*3)--(0.2*9,0.2*3)--(0.2*9,0.2*7)
		--(0.2*8,0.2*7)--(0.2*8,0.2*8)--(0.2*2,0.2*8)--(0.2*2,0.2*7)--(0.2*1,0.2*7)--(0.2*1,0.2*3)
		--(0.2*2,0.2*3)--(0.2*2,0.2*2);			
		\draw[black, ultra thick] (1,1) ellipse [x radius=0.56cm, y radius=0.5cm];
		\draw[step =0.2cm,gray,thin] (0.6,0.6) grid (7*0.2cm,7*0.2cm);
		\draw[blue,ultra thick](0.2*2,0.2*3)--(0.2*2,0.2*7);
		\draw[blue,ultra thick](0.2*3,0.2*2)--(0.2*3,0.2*8);
		\draw[blue,ultra thick](0.2*4,0.2*2)--(0.2*4,0.2*4);
		\draw[blue,ultra thick](0.2*4,0.2*6)--(0.2*4,0.2*8);
		\draw[blue,ultra thick](0.2*5,0.2*2)--(0.2*5,0.2*3); 
		\draw[blue,ultra thick](0.2*5,0.2*7)--(0.2*5,0.2*8);
		\draw[blue,ultra thick](0.2*6,0.2*2)--(0.2*6,0.2*4); 
		\draw[blue,ultra thick](0.2*6,0.2*6)--(0.2*6,0.2*8);
		\draw[blue,ultra thick](0.2*7,0.2*2)--(0.2*7,0.2*8);
		\draw[blue,ultra thick](0.2*8,0.2*3)--(0.2*8,0.2*7);
		
		\draw[blue,ultra thick](0.2*2,0.2*3)--(0.2*8,0.2*3);
		\draw[blue,ultra thick](0.2*1,0.2*4)--(0.2*4,0.2*4); 
		\draw[blue,ultra thick](0.2*6,0.2*4)--(0.2*9,0.2*4);
		\draw[blue,ultra thick](0.2*1,0.2*5)--(0.2*3,0.2*5); 
		\draw[blue,ultra thick](0.2*7,0.2*5)--(0.2*9,0.2*5);
		\draw[blue,ultra thick](0.2*1,0.2*6)--(0.2*4,0.2*6); 
		\draw[blue,ultra thick](0.2*6,0.2*6)--(0.2*9,0.2*6);
		\draw[blue,ultra thick](0.2*2,0.2*7)--(0.2*8,0.2*7);			
		\node[left] at (2.15,1.0) {$\tilde{\Omega}^n$};
		\node[right] at (1.25,1.5) {$\Ct_{h,B}^n$};
	\end{tikzpicture}
	\caption{\small Left: the domain $D$ and its partition $\Ct_h$, the approximate domain $\Omega_\eta^n$ and its $\delta$-neighborhood $\Omega_{\delta}^n$.
		Right: the set of red and yellow squares $\Ct^n_h$, the set of red squares $\Ct^n_{h,B}$, the set of blue edges $\Ce_{h,B}^{n}$, the union of red and yellow squares $\tilde{\Omega}^n$.} \label{fig:mesh}
\end{figure}
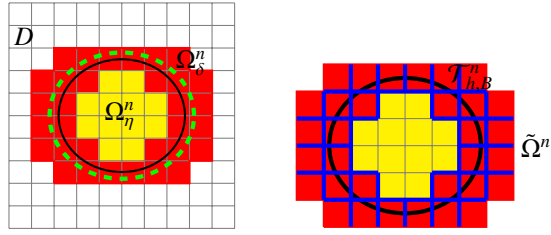

Next we define the finite element spaces on $D$ and $\tilde \Omega^n$, respectively, as follows
\begin{align*}
	V(k,\Ct_h) :=\,& \big\{v\in\Hone[D]:
	v|_K\in Q_k(K),\;\forall\,K\in \Ct_h\big\}, \\
	V(k,\Ct^n_h):=\,&  \big\{v|_{\tilde{\Omega}^n}:
	v\in V(k,\Ct_h) \big\} ,
\end{align*}
where $Q_k$ is the space of polynomials whose degrees are no more than $k$ for each variable. 
The space of piecewise regular functions over $\Ct^n_h$ is defined by
\begin{equation}\label{eq:FEM space}
	H^m(\Ct^n_h) := \big\{v\in\Ltwo[\tilde{\Omega}^n]:\;
	v|_K\in H^m(K),\;\forall\, K\in\Ct^n_h\big\},\qquad m\ge 1.
\end{equation}

\subsection{Construction of an approximate ALE map}\label{sec:construction ALE}
Let $\BX_\tau^{n,n-1}$ denote the approximation of 
the inverse of $\BX_F(t_n,t_{n-1},\Bx)$. 
This is computed by solving \eqref{eq:X} with the RK-$(k+1)$ scheme (the $(k+1)^{\text{th}}$-order Runge-Kutta scheme) from $t_n$ to $t_{n-1}$. 
For any $\Bx^{n}\in \Gamma_\eta^n$, the point 
$\Bx^{n-1}=\BX_\tau^{n,n-1}(\Bx^n)$ is calculated 
as follows:
\begin{equation}\label{eq:RKrk}
	\begin{cases}
		\Bx^{(1)} = \Bx^{n},  \\
		\Bx^{(i)}  =\Bx^{n} -
		\tau \sum \limits_{j=1}^{i-1} 
		a^{k+1}_{ij} \Bv(\Bx^{(j)},t_{n} - c_j^{k+1} \tau),\quad
		2\le i\le n_{k+1},  
		\vspace{0.5mm}\\
		\Bx^{n-1} =\Bx^{n} -
		\tau \sum \limits_{i=1}^{n_{k+1}} 
		d_{i}^{k+1} \Bv(\Bx^{(i)},t^{(i)}) .
	\end{cases}
\end{equation}
Here $a_{ij}^{k+1},\;d_i^{k+1},\;c_i^{k+1}$ are the coefficients of the RK-($k+1$) scheme, 
satisfying $a_{ij}^{k+1} =0$ if $j\geq i$, and $n_{k+1}$ is the number of stages. 
Recall from section~\ref{sec:backmap} that $\Bg^{n,n-1}$ maps $\Gamma^n$ to $\Gamma^{n-1}$. We define its approximation by
$\Bg_{\eta}^{n,n-1}= \BX_{\tau}^{n,n-1}|_{\Gamma_{\eta}^n}$.

A  discrete approximation of $\BX^{n,n-1}$ is defined by solving the discrete problem: find $\BX_h^{n,n-1}\in \BV(k,\Ct_h^n) :=(V(k,\Ct_h^n))^2$ such that
\begin{equation}\label{eq:discrte Xh} 
	\mathscr{A}_h^n(\BX_h^{n,n-1},\Bv_h) =
	\Cf_{\Gamma_{\eta}^n}(\Bg_{\eta}^{n,n-1},\Bv_h)
	\quad\; \forall\, \Bv_h \in \BV(k,\Ct_h^n).
\end{equation}
where $\Cf_{\Gamma_{\eta}^n}(\Bw,\Bv_h) = \left<\Bw,\gamma_0h^{-1}\Bv_h-\partial_{\Bn} \Bv_h\right>_{\Gamma_\eta^n}$ and the bilinear forms are defined by 
\begin{align}
	\bm{\mathscr{A}}_h^n(\Bu,\Bv) :=\,& \mathscr{A}_h^n(u_1,v_1)+\mathscr{A}_h^n(u_2,v_2), \label{A-nh}\\
	\mathscr{A}^n_h(u,v):=\,&(\nabla u, \nabla v)_{\Omega^n_{\eta}} + \mathscr{S}^{n}_h(u,v) 
	+ \mathscr{J}_{0}^{n}(u,v) +\mathscr{J}_{1}^{n}(u,v) , \label{a-nh}\\	
	\mathscr{S}^n_h(u,v):=\,&- \left<v,\partial_\Bn u\right>_{\Gamma_{\eta}^n}
	-\left<u,\partial_\Bn v\right>_{\Gamma_{\eta}^n},\label{S-nh} \\
	\mathscr{J}^n_0(u,v):=\,& \frac{\gamma_0}{h}\left<u, v\right>_{\Gamma_{\eta}^n},\quad 
	\mathscr{J}^n_1(u,v):= \sum_{E\in \Ce_{h,B}^{n}} 
	\sum_{l=1}^k h^{2l-1}\left<\jump{\partial_{\Bn}^l u},
	\jump{\partial_{\Bn}^l v} \right>_{E}. \label{J1-nh}
\end{align}
In \eqref{S-nh}, $\partial_{\Bn} u=\frac{\partial u}{\partial \Bn}$ denotes the normal derivative of $u$ 
on $\Gamma_{\eta}^n$. In \eqref{J1-nh}, $\mathscr{J}^n_0$ is used to impose the Dirichlet boundary condition weakly with $\gamma_0>0$, $\partial_{\Bn}^l v$ denotes 
the $l$-th order normal derivative of $v$, and $\jump{v}|_E = v|_{K_1}-v|_{K_2}$ denotes the jump of $v$ across edge $E$ where $K_1,K_2\in\Ct_h$ are the two elements sharing $E$.

Similar to \eqref{def At k}, we define the fully discrete ALE map and artificial velocity by
\begin{equation}\label{eq: ALE Ah}
	\Ca^{n}_{k,h}(\Bx,t) = \sum_{i=0}^k l_n^i(t)\BX_h^{n,n-i},\quad
	\Bw_{k,h}^{n}(\Bx,t) = \sum_{i=0}^k (l_n^i)'(t) \BX_h^{n,n-i},
\end{equation}
where $\BX_h^{n,n}=\BI$ and the multi-step backward flow map is defined by
\begin{align}\label{def:Xh n-i}
	\BX_h^{n,n-i} := \BX_h^{n-1,n-i}\circ \BX_h^{n,n-1}
	=\BX_h^{n-i+1,n-i}\circ \cdots\circ \BX_h^{n,n-1}.
\end{align}
The recursive definition only needs to compute the one-step map  $\BX_h^{n,n-1}$ at each $t_n$.

\subsection{The fully discrete scheme}
Given the finite element function $\Bw_{k,h}^{n}$, 
we define the $k^{\text{th}}$-order time finite difference operator as
\begin{equation}\label{eq:time diff wh}
	\frac{1}{\tau} \Lambda^k_{\Bw_h} \ul{\BU^n_h}
	= \frac{1}{\tau}\Lambda^k \ul{\BU^n_h} -\Bw_{k,h}^n\cdot \nabla U^{n,n}_h,
\end{equation}
where $\ul{\BU^n_h}=\big[U^{n-k,n}_h,\cdots,U^{n,n}_h\big]^\top$ and  $U_h^{n-i,n}=u_h^{n-i}\circ\BX_h^{n,n-i}$. Note that $U_h^{n,n} = u_h^n$. 
The discrete approximation to problem \eqref{cd-model} is to 
seek  $u^n_h\in V(k,\Ct_h^{n})$ such that 
\begin{align}	\label{eq:discrete-uh}
	\frac{1}{\tau}\big(\Lambda^k_{\Bw_h} \ul{\BU^n_h}, v_h\big)_{\Omega^n_{\eta}}
	+ \mathscr{A}^n_h(u_h^n,v_h) 
	= (f^n,v_h)_{\Omega^n_{\eta}} \quad
	\forall\,v_h\in V(k,\Ct_h^{n}).
\end{align}	
In view of \eqref{eq:discrete-uh}, the stiffness matrix corresponding to $\mathscr{A}_h^n$ has already been obtained when computing the discrete ALE map in \eqref{eq:discrte Xh}. The $\delta$-neighborhood $\Omega_{\delta}^n$ is chosen to ensure that $u_h^{n-i}\circ \BX_h^{n,n-i}$ is well-defined for $0\leq i\leq k$.
\subsection{Well-posedness and error estimates}
\label{sec:The analysis for the fully scheme}
In this section, we show the well-posedness, stability, and error estimates of the discrete problems in the appendix. 
Firstly, we define the mesh-dependent norms
\begin{align*}
	&\TN{v}_{\Omega^n_{\eta}}  = \big(\SNHone[\Omega^n_{\eta}]{v}^2 
	+h^{-1}\NLtwo[\Gamma^n_{\eta}]{v}^2 
	+ h\NLtwo[\Gamma^n_{\eta}]{\partial_{\Bn}v}^2\big)^{1/2} ,\\
	&\TN{v}_{\Ct^n_h}  = \big(\SNHone[\Omega^n_{\eta}]{v}^2 
	+\mathscr{J}^n_0(v,v) +\mathscr{J}^n_1(v,v)\big)^{1/2} .
\end{align*}
Clearly $\TN{\cdot}_{\Ct^n_h}$ is a norm on $H^1(\tilde \Omega^{n})\cap H^{k+1}(\Ct^n_h)$. From \cite{guz18}, we have the following norm inequalities: for any $v_h\in V(k,\Ct_h^{n})$, 
\begin{align}	\label{norm-eq0}
	\NLtwo[\tilde \Omega^n]{v_h}^2
	\lesssim \NLtwo[\Omega^n_{\eta}]{v_h}^2 
	+h^2\mathscr{J}_1^n(v_h,v_h), 	\qquad
	\TN{v_h}_{\Omega^n_{\eta}} \lesssim \TN{v_h}_{\Ct^n_h}.
\end{align}
Suppose $\gamma_0$ is large enough. It is standard to prove the coercivity and continuity of the bilinear form (see \cite{ma21}): 
for any $v_h\in V(k,\Ct^n_h)$ and $u\in H^{k+1}(\Ct_h^n)\cap H^1(\Omega_{\eta}^n)$, 
\begin{align*}
	\mathscr{A}_h^n(v_h,v_h) \geq C_a\TN{v_h}_{\Ct^n_h}^2, \qquad
	\SN{\mathscr{A}_h^n (u,v_h)} \lesssim 
	\TN{u}_{\Omega_{\eta}^n}\TN{v_h}_{\Ct^n_h}.
\end{align*}
By Corollary~\ref{corol:bound of Jh}, the artificial velocity $\Bw_{k,h}^n$ is bounded. This gives the theorem.

\begin{theorem}\label{lem:Ah}
	There is a positive constant $\tau_0 \lesssim C_a\|\Bw_{k,h}^n\|_{\BL^{\infty}(\Omega_{\eta}^n)}^{-2}$ small enough such that the problem \eqref{eq:discrete-uh} has a unique solution for any $\tau \in (0, \tau_0]$.
\end{theorem}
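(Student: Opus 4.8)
The plan is to recast \eqref{eq:discrete-uh} as a square finite-dimensional linear system and establish its well-posedness through coercivity. First I would isolate the unknown. Since $U_h^{n,n}=u_h^n$ whereas each $U_h^{n-i,n}=u_h^{n-i}\circ\BX_h^{n,n-i}$ with $1\le i\le k$ is already known from earlier time steps, the operator in \eqref{eq:time diff wh} contributes, as far as $u_h^n$ is concerned, only $\lambda_0^k u_h^n/\tau-\Bw_{k,h}^n\cdot\nabla u_h^n$; the remaining terms $\tau^{-1}\sum_{i=1}^k\lambda_i^k U_h^{n-i,n}$ are data and are moved to the right-hand side along with $(f^n,\cdot)_{\Omega^n_\eta}$. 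Thus \eqref{eq:discrete-uh} reads $B(u_h^n,v_h)=F(v_h)$ for all $v_h\in V(k,\Ct_h^n)$, where
\[
B(u,v):=\frac{\lambda_0^k}{\tau}(u,v)_{\Omega^n_\eta}-(\Bw_{k,h}^n\cdot\nabla u,v)_{\Omega^n_\eta}+\mathscr{A}^n_h(u,v).
\]
As this is a square system on a finite-dimensional space, it suffices to show that the homogeneous problem has only the trivial solution, which I would deduce from coercivity of $B$ with respect to $\TN{\cdot}_{\Ct^n_h}$.

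Next I would estimate $B(v_h,v_h)$ from below. The mass term contributes $\lambda_0^k\tau^{-1}\NLtwo[\Omega^n_\eta]{v_h}^2$ with $\lambda_0^k>0$ (Table~\ref{tab:BDF}), and the coercivity of $\mathscr{A}^n_h$ recalled before the theorem gives $\mathscr{A}^n_h(v_h,v_h)\ge C_a\TN{v_h}_{\Ct^n_h}^2$. The single indefinite contribution is the convection term, which I would control by Cauchy--Schwarz followed by Young's inequality,
\begin{align*}
\SN{(\Bw_{k,h}^n\cdot\nabla v_h,v_h)_{\Omega^n_\eta}}
&\le \|\Bw_{k,h}^n\|_{\BL^\infty(\Omega^n_\eta)}\,\SNHone[\Omega^n_\eta]{v_h}\,\NLtwo[\Omega^n_\eta]{v_h}\\
&\le \frac{C_a}{2}\SNHone[\Omega^n_\eta]{v_h}^2
+\frac{\|\Bw_{k,h}^n\|_{\BL^\infty(\Omega^n_\eta)}^2}{2C_a}\NLtwo[\Omega^n_\eta]{v_h}^2.
\end{align*}
Since $\TN{v_h}_{\Ct^n_h}^2\ge\SNHone[\Omega^n_\eta]{v_h}^2$ by the definition of the triple norm, the seminorm piece is absorbed: $C_a\TN{v_h}_{\Ct^n_h}^2-\tfrac{C_a}{2}\SNHone[\Omega^n_\eta]{v_h}^2\ge\tfrac{C_a}{2}\TN{v_h}_{\Ct^n_h}^2$.

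Collecting the three contributions yields
\[
B(v_h,v_h)\ge\Big(\frac{\lambda_0^k}{\tau}-\frac{\|\Bw_{k,h}^n\|_{\BL^\infty(\Omega^n_\eta)}^2}{2C_a}\Big)\NLtwo[\Omega^n_\eta]{v_h}^2+\frac{C_a}{2}\TN{v_h}_{\Ct^n_h}^2.
\]
Taking $\tau_0=2C_a\lambda_0^k\|\Bw_{k,h}^n\|_{\BL^\infty(\Omega^n_\eta)}^{-2}$ --- which has the asserted scaling $\tau_0\lesssim C_a\|\Bw_{k,h}^n\|_{\BL^\infty(\Omega^n_\eta)}^{-2}$ since $\lambda_0^k\le 25/12$ and $\Bw_{k,h}^n$ is bounded by Corollary~\ref{corol:bound of Jh} --- makes the $L^2$ coefficient nonnegative for every $\tau\in(0,\tau_0]$, whence $B(v_h,v_h)\ge\tfrac{C_a}{2}\TN{v_h}_{\Ct^n_h}^2$. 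As $\TN{\cdot}_{\Ct^n_h}$ is a norm on $V(k,\Ct_h^n)\subset H^1(\tilde\Omega^n)\cap H^{k+1}(\Ct^n_h)$, this lower bound is strictly positive for $v_h\neq0$, establishing coercivity and hence the existence and uniqueness of $u_h^n$.

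The main obstacle is the sign-indefinite convection term $-(\Bw_{k,h}^n\cdot\nabla v_h,v_h)$: on the cut domain $\Omega^n_\eta$ the discrete artificial velocity $\Bw_{k,h}^n$ is in general neither divergence-free nor tangent to $\Gamma^n_\eta$, so the skew-symmetry/integration-by-parts argument that would give unconditional stability is unavailable. One is therefore forced to absorb this term by Young's inequality at the expense of the mass term, and it is exactly this trade-off that imposes the step-size restriction and fixes the scaling $\tau_0\lesssim C_a\|\Bw_{k,h}^n\|_{\BL^\infty(\Omega^n_\eta)}^{-2}$.
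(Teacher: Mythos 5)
Your proof is correct and follows exactly the route the paper intends: the paper only sketches this result (coercivity of $\mathscr{A}^n_h$ plus boundedness of $\Bw_{k,h}^n$, followed by ``This gives the theorem''), and your argument---isolating $u_h^n$, absorbing the convection term into the mass term via Young's inequality, and concluding positive definiteness of the resulting bilinear form on the finite-dimensional space---is precisely the detail that sketch suppresses, reproducing the stated scaling $\tau_0 \lesssim C_a\|\Bw_{k,h}^n\|_{\BL^{\infty}(\Omega_{\eta}^n)}^{-2}$.
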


\begin{theorem} \label{thm:uh-stab}
	Assume the approximate boundary satisfies \eqref{eq:chi_eta} and  the penalty parameter $\gamma_0$ in $\mathscr{A}^n_h$ is large enough.
	Let $u_h^{n}$ be the solution to the discrete problem \eqref{eq:discrete-uh} 
	based on the pre-calculated initial values $\{u_h^{0},\cdots, u^{k-1}_h\}$. 
	There is an $h_0>0$ small enough such that, 
	for any $h=O(\tau)\in(0, h_0]$ and $m\ge k$, 
	\begin{align*}
		\|{u^m_h}\|^2_{L^2(\Omega^n_{\eta})}
		+\sum_{n=k}^m\tau\TN{u_h^n}^2_{\Ct_h^n} 
		\lesssim\, \sum_{n=k}^m
		\tau\|{f^n}\|^2_{L^2(\Omega^n_\eta)} 
		+\sum_{i=0}^{k-1}\left[
		\|{u^i_h}\|^2_{L^2(\Omega^i_\eta)} +
		\tau \|u^{i}_h\|_{H^1(\tilde\Omega^{i})}^2
		\right]. 
	\end{align*}
\end{theorem}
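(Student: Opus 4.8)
The plan is to run the classical energy argument for a BDF-$k$ scheme, but carried out on the moving cut domains $\Omega^n_\eta$ and with the transport maps $\BX_h^{n,n-i}$ built into the time-difference operator. Since $k$ ranges up to $4$ and BDF-$3$, BDF-$4$ are not $A$-stable, testing with $u_h^n$ alone does not give a sign-definite contribution from the leading term; instead I would invoke the Nevanlinna--Odeh / Dahlquist $G$-stability framework. Concretely, fix the multiplier $\eta=\eta_k$ for which BDF-$k$ is $G$-stable (with $\eta_k=0$ for $k\le 2$), take $v_h=u_h^n-\eta_k\,u_h^{n-1}|_{\tilde\Omega^n}$ as the test function in \eqref{eq:discrete-uh} (legitimate because the $\delta$-neighbourhood guarantees $u_h^{n-1}$ is defined on all of $\tilde\Omega^n$), and use the existence of an SPD matrix $G=(g_{ij})$ realizing the algebraic $G$-stability inequality for the BDF symbol. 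This converts the leading part $\frac1\tau(\Lambda^k\ul{\BU^n_h},v_h)_{\Omega^n_\eta}$ into a telescoping energy difference $\frac1\tau(\mathcal E_n-\mathcal E_{n-1})$ plus lower-order remainders, where $\mathcal E_n$ is the $G$-weighted $L^2(\Omega^n_\eta)$ energy of the local history $(U^{n,n}_h,\dots,U^{n-k+1,n}_h)$.

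The first genuine difficulty is that $\mathcal E_n$ is an integral over $\Omega^n_\eta$ of the transported iterates $U^{n-i,n}_h=u_h^{n-i}\circ\BX_h^{n,n-i}$, whereas $\mathcal E_{n-1}$ lives over $\Omega^{n-1}_\eta$. To make the telescoping honest I would use the recursion $\BX_h^{n,n-i}=\BX_h^{n-1,n-i}\circ\BX_h^{n,n-1}$ together with the change of variables $\Bx\mapsto\BX_h^{n,n-1}(\Bx)$, and the fact that for small $\tau$ the discrete flow map is an $O(\tau)$-perturbation of the identity with Jacobian $1+O(\tau)$ (this follows from the harmonic-extension construction \eqref{eq:construction of X}, the smoothness of the boundary map, and the bound on $\Bw_{k,h}^n$ from Corollary~\ref{corol:bound of Jh}). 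Combined with the area mismatch $\text{area}(\Omega^n\,\triangle\,\Omega^n_\eta)=O(\tau^{k+1})$ implied by \eqref{eq:chi_eta}, this shows that $\mathcal E_n$ evaluated in the level-$n$ and the level-$(n-1)$ frames differ by a factor $1+O(\tau)$, so the telescoping holds up to a multiplicative $(1+C\tau)$ Gronwall factor.

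Next I would treat the extra ALE convection term. Testing $-\Bw_{k,h}^n\cdot\nabla u_h^n$ against $v_h$ yields $-\big(\Bw_{k,h}^n\cdot\nabla u_h^n,\,u_h^n-\eta_k u_h^{n-1}\big)_{\Omega^n_\eta}$, which by Cauchy--Schwarz and the uniform bound $\|\Bw_{k,h}^n\|_{\BL^\infty(\Omega^n_\eta)}\lesssim 1$ is controlled by $C\,\SNHone[\Omega^n_\eta]{u_h^n}\,(\NLtwo[\Omega^n_\eta]{u_h^n}+\NLtwo[\Omega^n_\eta]{u_h^{n-1}})$. After multiplying \eqref{eq:discrete-uh} by $\tau$, I would absorb the $\SNHone[\Omega^n_\eta]{u_h^n}$ factor into the coercive term $C_a\tau\TN{u_h^n}^2_{\Ct_h^n}$ by Young's inequality with weight $\sim C_a$, leaving a residual $\tau(\NLtwo[\Omega^n_\eta]{u_h^n}^2+\NLtwo[\Omega^n_\eta]{u_h^{n-1}}^2)$ for Gronwall. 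It is exactly this absorption that forces the step restriction $\tau\le\tau_0\sim C_a\|\Bw_{k,h}^n\|_{\BL^\infty}^{-2}$ from Theorem~\ref{lem:Ah}. The penalties $\mathscr J_0^n,\mathscr J_1^n$ in $\mathscr A^n_h$, together with the norm equivalences \eqref{norm-eq0}, are what let me bound all quantities over the full fictitious domain $\tilde\Omega^n$ (needed because $U^{n-i,n}_h$ samples $u_h^{n-i}$ slightly outside $\Omega^{n-i}_\eta$) and also control the source via $(f^n,v_h)_{\Omega^n_\eta}\le\NLtwo[\Omega^n_\eta]{f^n}\NLtwo[\Omega^n_\eta]{v_h}\lesssim\NLtwo[\Omega^n_\eta]{f^n}\TN{u_h^n}_{\Ct_h^n}$.

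Finally I would sum over $n=k,\dots,m$: the telescoping collapses to $\mathcal E_m-\mathcal E_{k-1}$, the coercivity contributes $\sum_{n=k}^m C_a\tau\TN{u_h^n}^2_{\Ct_h^n}$, the sources give $\sum\tau\NLtwo[\Omega^n_\eta]{f^n}^2$, and the starting energy $\mathcal E_{k-1}$ is dominated by $\sum_{i=0}^{k-1}(\NLtwo[\Omega^i_\eta]{u_h^i}^2+\tau\|u_h^i\|_{H^1(\tilde\Omega^i)}^2)$, which accounts for the $\tau H^1$ terms in the statement. A discrete Gronwall inequality then removes the accumulated $(1+C\tau)$ factors for $\tau$ (hence $h=O(\tau)$) small enough, giving the stated bound. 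I expect the main obstacle to be the second step: reconciling the fixed-inner-product $G$-stability identity with the genuinely time-dependent integration domains and the composition with discrete harmonic flow maps, i.e.\ showing that the frame-change errors are uniformly $O(\tau)$ relative to the energy and do not accumulate faster than Gronwall can absorb.
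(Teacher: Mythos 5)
Your high-level skeleton (multiplier-type test function producing a telescoping BDF energy, change of variables through $\BX_h^{n,n-1}$ with Jacobian $1+O(\tau)$, absorption of the ALE convection term into the coercive part, discrete Gronwall) is the same as the paper's, and replacing the paper's algebraic tool --- Liu's sum-of-squares identity for BDF-$k$, $k\le 5$, obtained by testing with $2u_h^n-(\cdot)$, cf.\ \cite{liu13} --- by the Nevanlinna--Odeh $G$-stability framework would by itself be a legitimate variation. The genuine gap is your test function. You take $v_h=u_h^n-\eta_k\,u_h^{n-1}|_{\tilde\Omega^n}$ and justify it by claiming that ``the $\delta$-neighbourhood guarantees $u_h^{n-1}$ is defined on all of $\tilde\Omega^n$.'' This is false on both counts. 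The $\delta$-neighbourhood (of width $0.5\tau$) is designed so that the \emph{composition} $U^{n-1,n}_h=u_h^{n-1}\circ\BX_h^{n,n-1}$ is well defined on $\Omega^n_\eta$, because the backward map carries $\Omega^n_\eta$ into $\tilde\Omega^{n-1}$ (see Corollary~\ref{corol:bound of Jh}); it does \emph{not} give $\tilde\Omega^n\subset\tilde\Omega^{n-1}$. When the boundary moves, cells of $\Ct^n_h$ lying ahead of the motion need not belong to $\Ct^{n-1}_h$, so the plain restriction $u_h^{n-1}|_{\tilde\Omega^n}$ is simply undefined there (the scheme determines $u_h^{n-1}$ only on $\tilde\Omega^{n-1}$, and it has no canonical extension). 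Moreover, the multiplier identity you invoke requires the multiplier to act on the same objects that appear in the difference operator: $\Lambda^k\ul{\BU^n_h}$ is built from the \emph{transported} iterates $U^{n-i,n}_h$, so the $G$-stability inequality needs the test function $U^{n,n}_h-\eta_k U^{n-1,n}_h$, not $u_h^n-\eta_k u_h^{n-1}$; with the untransported history the cross terms do not telescope without an extra substitution estimate that again presupposes $u_h^{n-1}$ makes sense on $\Omega^n_\eta$.

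The deeper obstruction, which your proposal never confronts, is that $U^{n-1,n}_h$ itself is \emph{not} an admissible test function either: composing a piecewise-$Q_k$ function with the piecewise-polynomial map $\BX_h^{n,n-1}$ destroys the piecewise-polynomial structure, so $U^{n-1,n}_h\notin V(k,\Ct_h^n)$. This is precisely the difficulty the paper's proof is organized around. It introduces the modified Ritz projection $\Cp^n_h$ of \eqref{eq:Pnh}, tests \eqref{eq:discrete-uh} with $v_h=2u^n_h-\Cp^n_h\big(U^{n-1,n}_h\big)$, and then controls the projection defect $R^n_2=\big(\Lambda^k\ul{\BU_h^{n}},\,\Cp^n_h(U^{n-1,n}_h)-U^{n-1,n}_h\big)_{\Omega^n_\eta}$ via \eqref{Rw-0}, whose $O(h)=O(\tau)$ smallness is exactly what keeps this term at a Gronwall-absorbable size. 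Your proposal contains no counterpart to this projection (or to any other device mapping the transported history into $V(k,\Ct_h^n)$) and no estimate of the resulting defect, so the multiplier identity cannot actually be executed in the discrete space. The remaining ingredients of your outline --- the change-of-variables bounds for the telescoped energies (the paper's Step~2, via Lemma~\ref{lem:Uh} and the relation $\Phibf_i^4(\ul{\BU_h^n})=\Psibf_i^4(\ul{\BU_h^{n-1}})\circ\BX_h^{n,n-1}$), the treatment of the convection term, the source, and the initial-energy bookkeeping --- do match the paper's argument and are sound once the test function is repaired.
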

The proof of Theorem~\ref{thm:uh-stab} is presented in \ref{sec:The proof of the stability for the discrete solutions}.

For $v\in W^{\mu,\infty}(0,T;H^s(\Omega_t))$, one has $\|v(t)\|_{H^s(\Omega_t)}\in W^{\mu,\infty}([0,T])$
for $\mu =0,1$ and $s=1,\cdots, k+1$.
Suppose $f\in L^{\infty}(0,T;H^1(D))$. The exact solution satisfies
\begin{equation}\label{ass exact solution}
	u\in H^{k+1}(Q_T) \cap L^{\infty}(0,T;H^{k+1}(\Omega_t))\cap W^{1,\infty}(0,T;H^1(\Omega_t)), 
\end{equation} 
where $Q_T = \{ (\Bx,t):\Bx\in \Omega_t, t\in [0,T]\}$.
Assume $\Ca^n$ is smooth in time such that 
\begin{equation}\label{assum ALE mapping}
	\| \partial_t^{k+1}\Ca^n(\Bx,t)\|_{L^2(\Omega_t)} \leq M_0
	\quad  \forall\, t\in [t_{n-k}, t_n],
\end{equation}
where $k$ is the order of the BDF scheme and $M_0$ is independent of $\tau$, $\eta$, $h$ and $n$. 
Since $\Ca^{n}_k$ is the $k^{\text{th}}$-order Lagrange interpolation of $\Ca^n$ in $[t_{n-k},t_n]$, 
we have
\begin{equation}\label{eq:bound of A}
	\sum_{i=0}^k\|\partial_t^i\Ca^{n}_{k,h}\|_{L^2(\Omega_{\eta}^n)} \lesssim \sum_{i=0}^k\|\partial_t^i\Ca^{n}_k\|_{L^2(\Omega_t)} \lesssim \|\partial_t^{k+1}\Ca^n\|_{L^2(\Omega_t)}
	\quad \forall\, t \in [t_{n-k},t_n].
\end{equation}

\begin{theorem}\label{thm:err}
	Suppose the assumptions in Theorem~\ref{thm:uh-stab} hold and the initial solutions satisfy $\|{u^i-u_h^i}\|^2_{L^2(\Omega_\eta^i)}
	+\tau \||{u^i-u_h^i}|\|_{\Omega_\eta^i}^2
	\lesssim \tau^{2k}$ for $\le i\le k-1$.	Then 
	\begin{equation}\label{um-err}
		\|{u^m-u_h^m}\|^2_{L^2(\Omega^m_\eta)}
		+ \sum_{n=k}^{m}\tau \||{u^{n}-u_h^{n}}|\|_{\Ct_h^n}^2 
		\lesssim \tau^{2k},\qquad
		k\leq m\leq N.
	\end{equation} 
\end{theorem}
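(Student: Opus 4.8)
The plan is to follow the standard energy-method framework for unfitted finite element error analysis, reducing everything to the stability bound of Theorem~\ref{thm:uh-stab}. First I would introduce a suitable projection (the elliptic/Ritz projection associated with $\mathscr{A}_h^n$), $R_h^n u^n\in V(k,\Ct_h^n)$ defined by $\mathscr{A}_h^n(R_h^n u^n,v_h)=\mathscr{A}_h^n(u^n,v_h)$ for all $v_h\in V(k,\Ct_h^n)$, and split the error as $u^n-u_h^n=\rho^n+\theta^n$ with $\rho^n=u^n-R_h^n u^n$ and $\theta^n=R_h^n u^n-u_h^n\in V(k,\Ct_h^n)$. Using the coercivity and continuity of $\mathscr{A}_h^n$ stated in section~\ref{sec:The analysis for the fully scheme}, the $Q_k$ approximation estimates from \cite{ma21,guz18}, and the regularity \eqref{ass exact solution}, the projection error obeys $\|\rho^n\|_{L^2(\Omega_\eta^n)}\lesssim h^{k+1}$ and $\TN{\rho^n}_{\Ct_h^n}\lesssim h^k$; since $h=O(\tau)$ these are already $O(\tau^{k+1})$ and $O(\tau^k)$, so it remains only to control the discrete component $\theta^n$.

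Next I would derive the error equation for $\theta^n$. Subtracting the scheme \eqref{eq:discrete-uh} from the material-derivative form \eqref{cd-eqn1} of the continuous equation, tested against $v_h$ and integrated by parts over $\Omega_\eta^n$, produces a discrete problem of the same structure as \eqref{eq:discrete-uh} with $\theta^n$ as unknown and a consistency residual $\mathcal{R}^n$ on the right-hand side; the Ritz projection annihilates the $\rho^n$ contribution to $\mathscr{A}_h^n(\cdot,v_h)$, leaving $\rho$ only inside the time-difference operator. The residual collects three groups: (i) the BDF-$k$ truncation error, i.e.\ the gap between $\tau^{-1}\Lambda^k$ applied to the exact nodal traces transported by the ALE map and the material derivative $\tfrac{\D}{\D t}u(\Ca^n(\cdot,t),t)|_{t_n}$; (ii) the error in the fully discrete ALE data, namely $\Ca^n_{k,h}$ versus $\Ca^n_k$ and $\Bw_{k,h}^n$ versus $\Bw^n$, arising from the finite-element solution of the harmonic extension \eqref{eq:discrte Xh}, the RK-$(k{+}1)$ boundary map \eqref{eq:RKrk}, and the boundary approximation $\Bg_\eta^{n,n-1}$; and (iii) geometric consistency terms, since all integrals are over $\Omega_\eta^n$ rather than $\Omega^n$, together with the weakly imposed boundary condition on $\Gamma_\eta^n$.

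I would then bound each group. Group (i) is $O(\tau^k)$ by Taylor expansion, using the Lagrange-interpolation bound for $\Ca^n_k$ and the smoothness hypotheses \eqref{assum ALE mapping}--\eqref{eq:bound of A}. Group (ii) is controlled by the approximation properties of the harmonic-extension discretization and the $(k{+}1)$-th order boundary tracking, which through \eqref{eq:chi_eta} and the earlier lemmas on $\BX_h^{n,n-i}$ yield errors compatible with $\tau^k$; here the neighborhood $\Omega_\delta^n$ in \eqref{eq:delta omega} guarantees that the compositions $u_h^{n-i}\circ\BX_h^{n,n-i}$ are well defined. Group (iii) is $O(\tau^{k+1})$ from the area estimate $\text{area}(\Omega^n\setminus\Omega_\eta^n)+\text{area}(\Omega_\eta^n\setminus\Omega^n)=O(\tau^{k+1})$ implied by \eqref{eq:chi_eta}. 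With $\mathcal{R}^n$ thereby bounded by $O(\tau^k)$ in the relevant norm, I would apply Theorem~\ref{thm:uh-stab} to the error equation for $\theta^n$, treating $\mathcal{R}^n$ as the source and the initial discrete errors as initial data. The hypothesis $\|u^i-u_h^i\|_{L^2(\Omega_\eta^i)}^2+\tau\TN{u^i-u_h^i}_{\Omega_\eta^i}^2\lesssim\tau^{2k}$ for $0\le i\le k-1$ bounds $\theta^i$ as well via the projection estimates, and the stability inequality then gives $\|\theta^m\|_{L^2(\Omega_\eta^m)}^2+\sum_{n=k}^m\tau\TN{\theta^n}_{\Ct_h^n}^2\lesssim\tau^{2k}$. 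Combining with the bounds on $\rho^n$ through the triangle inequality yields \eqref{um-err}.

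The main obstacle is the fully discrete material-derivative operator $\tau^{-1}\Lambda^k_{\Bw_h}$, specifically the transported quantities $U^{n-i,n}_h=u_h^{n-i}\circ\BX_h^{n,n-i}$ and the convective term $\Bw_{k,h}^n\cdot\nabla$. One must cleanly separate the temporal BDF truncation error from the spatial error in the discrete flow maps $\BX_h^{n,n-i}$, show that composing the discrete solution with a perturbed map preserves the order (requiring Lipschitz and stability control of the maps uniform in $n$, consistency between the projections $R_h^{n-i}$ across time levels, and the $W^{1,\infty}$ bound on $\Bw_{k,h}^n$ furnished by Corollary~\ref{corol:bound of Jh}), and verify that the scaling $h=O(\tau)$ renders the spatial contributions $O(\tau^k)$ so that none of them degrades the optimal rate. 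Ensuring all constants remain independent of the number of time steps $n$, so that the Gronwall-type argument embedded in Theorem~\ref{thm:uh-stab} closes, is the most delicate point.
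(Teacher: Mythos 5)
Your overall architecture (consistency residual, then the stability/Gronwall machinery, then a triangle inequality with the initial-error hypothesis) has the right shape, but the decomposition of the residual contains a step that fails quantitatively: your group (ii). You write the consistency relation along the exact/semi-discrete ALE data $(\Ca^n_k,\Bw^n_k)$ and move the discrepancy between $(\Ca^n_{k,h},\Bw^n_{k,h})$ and $(\Ca^n_k,\Bw^n_k)$ into the residual, asserting it is ``compatible with $\tau^k$''. The paper's own estimates do not support this. Lemma~\ref{lem:Xh} gives $\|\BX_h^{n,n-1}-\tilde\BX^{n,n-1}\|_{\BL^\infty(\tilde\Omega^n)}\lesssim\tau^{k-1/2}$, and only $O(h^k+\tau^{k+1/2})=O(\tau^k)$ in $L^2$ (the dominant part being the $O(h^k)$ finite element error of the discrete harmonic extension), while \eqref{eq:bound of Bw} gives $\|\Bw^n_{k,h}-\tilde\Bw^n_k\|_{\BL^\infty(\Omega_\eta^n)}\lesssim\tau^{k-3/2}$, i.e.\ $O(\tau^{k-1})$ in $L^2$. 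Since the map discrepancies enter the BDF operator multiplied by $\tau^{-1}$, and the velocity discrepancy multiplies $\nabla u^n$ directly, your group (ii) is $O(\tau^{k-1})$ in $L^2$ at best; and there is no cancellation under $\sum_i\lambda_i^k$ to rescue it, because the differences $\BX_h^{n,n-i}-\BX^{n,n-i}$ are independent elliptic-discretization errors at distinct levels, not a telescoping family. As written, your argument therefore proves at most $\|u^m-u_h^m\|^2_{L^2}+\sum_n\tau\||u^n-u_h^n|\|^2_{\Ct_h^n}\lesssim\tau^{2(k-1)}$, one full order short of \eqref{um-err}.

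The paper avoids this trap by never comparing the discrete ALE map with an exact one. The consistency relation \eqref{eq:tild u} is written for the extension $\tilde u$ transported along the \emph{fully discrete} map $\Ca^n_{k,h}$ itself, with the matching discrete velocity $\Bw^n_{k,h}$ in the convective correction; by the chain rule (using $\Ca^n_{k,h}(\cdot,t_n)=\BI$ and $\partial_t\Ca^n_{k,h}|_{t=t_n}=\Bw^n_{k,h}$) this relation is exact up to the single residual $R^n=\tau^{-1}\Lambda^k\tilde{\ul\BU}-\frac{\D}{\D t}\tilde u\circ\Ca^n_{k,h}(\cdot,t)|_{t=t_n}$, a pure BDF truncation error along the discrete characteristics, which is $O(\tau^k)$ because \eqref{eq:bound of A} and \eqref{eq:property of extension} bound the $(k+1)$-th time derivative of $t\mapsto\tilde u(\Ca^n_{k,h}(\cdot,t),t)$; the spatial accuracy of the ALE map never enters the residual at all (only its accuracy on $\Gamma_\eta^n$ enters, through the geometric terms). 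Subtracting \eqref{eq:discrete-uh} from \eqref{eq:tild u} then yields an error equation in which the past errors appear composed with exactly the same maps $\BX_h^{n,n-i}$ used by the scheme, so Lemma~\ref{lem:Uh} (which needs only $\|\bbJ_h^{n,n-i}-\bbI\|\lesssim\tau$, not accuracy) applies. This is the structural point your splitting misses, and it is precisely why the ALE map may be ``arbitrary''. A secondary issue: Theorem~\ref{thm:uh-stab} cannot be invoked verbatim as a black box, since the error equation's right-hand side is not an $L^2(\Omega_\eta^n)$ source; it contains the boundary functionals $\left<\tilde u^n,(\gamma_0/h-\partial_{\Bn})v_h\right>_{\Gamma_\eta^n}$ (the exact solution vanishes on $\Gamma^n$, not on $\Gamma_\eta^n$), so the energy/Gronwall argument of \ref{sec:The proof of the stability for the discrete solutions} must be rerun with these extra terms --- this is what the paper means by ``parallel to \cite[Section~6]{ma21}''.
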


By \cite{ma21}, there is an extension $\tilde u$ of $u$
such that, for $1\leq m\leq k+1$,
\begin{equation}\label{eq:property of extension}
	\begin{cases} 
		\|\tilde u(t)\|_{H^{m}(D)}\lesssim \|u(t)\|_{H^{m}(\Omega_t)}, \vspace{0.5mm}\\
		\|\tilde u(t)\|_{H^{k+1}(D\times[0,T])} \lesssim \|u(t)\|_{H^{k+1}(Q_T)},
		\vspace{0.5mm}\\
		\|\partial_t \tilde u(t)\|_{H^1(D)}\lesssim \|u(t)\|_{H^2(\Omega_t)} + 
		\|\partial_t u(t)\|_{H^1(\Omega_t)}.
	\end{cases}
\end{equation}
Then \eqref{eq:bound of A} and \eqref{eq:property of extension} show $\big\|\frac{\D^{k+1}}{\D t^{k+1}} \tilde u(\Ca_h^n(x,t),t_n)\big\|_{L^{2}(\Omega_{\eta}^n)} \lesssim M_0 \|u\|_{H^{k+1}(\Omega_t\times[t_{n-k},t_n] )}$. It is easy to see that the extension $\tilde{u}$ satisfies
\begin{align}
	\frac{1}{\tau}(\Lambda^k_{w_h} \tilde{\ul\BU},v_h)_{\Omega_{\eta}^n}
	+\mathscr{A}_h^n(\tilde u,v_h) 
	=\left<\tilde u^n,\,
	(\gamma_0/h-\partial_n) v_h\right>_{\Gamma_\eta^n}
	+(\tilde f^n+R^n,v_h)_{\Omega_{\eta}^n},
	\label{eq:tild u}
\end{align}
where $\tilde{\ul\BU}=[\tilde u^{n-k}\circ \Ca_{k,h}^n(\cdot,t_{n-k}),\cdots, \tilde u^n]$, $\tilde f^n = \partial_t \tilde u(t_n)-\Delta \tilde u^n$, and
\begin{align*} 
	R^n = \tau^{-1}\Lambda^k \tilde{\ul\BU}-
	\frac{\D}{\D t}\tilde u\circ \Ca_{k,h}^n(\cdot,t)|_{t=t_n}.
\end{align*}
The proof of Theorem~\ref{thm:err} is parallel to \cite[Section~6]{ma21} by subtracting \eqref{eq:discrete-uh} from \eqref{eq:tild u}. We omit the details here.

\subsection{The ALE-UFE framework}
Now we conclude the ALE-UFE framework for solving PDEs on varying domains. It consists of four steps. 
\vspace{1mm}
\begin{center}
	\fbox{\parbox{0.975\textwidth}
		{\begin{enumerate}[leftmargin=5mm]
				\item Track the varying interface by the forward boundary map \eqref{eq:X}.
				\vspace{0.5mm}
				
				\item  Construct a one-step backward flow map with \eqref{eq:discrte Xh}, a  
				discrete ALE map and an  artificial velocity with \eqref{eq: ALE Ah}, and a multi-step map $\BX^{n,n-i}_h$ with \eqref{def:Xh n-i}. 
				\vspace{0.5mm}
				
				\item Define the finite difference operator in \eqref{eq:time diff wh} by combining the BDF-$k$ scheme and the backward flow map.
				\vspace{0.5mm}
				
				\item Construct the fully discrete scheme as in \eqref{eq:discrete-uh}. 
	\end{enumerate}}}
\end{center}
\vspace{1mm}

The framework can be conveniently  applied to various moving-domain problems. The operations in each step  are adapted to a specific problem.

\section{A domain-PDE-coupled problem}
\label{sec:nonlinear}

In this section we apply the ALE-UFE framework to a nonlinear problem where the moving domain depends on the solution. Unless otherwise specified, the finite element spaces and bilinear forms are defined in the same way as in the previous section. We consider the following model:
\begin{equation}\label{eq:nonlienar}
	\frac{\partial \Bv}{\partial t} -\Delta \Bv= \Bf\quad\text{in}\;\; \Omega_t, \quad
	\partial_{\Bn} \Bv =\Bg_{N}\quad\text{on} \; \Gamma_t,	\qquad
	\Bv(\Bx,t_0) = \Bv_0,\;\; \text{in}\;\Omega_0, 
\end{equation}
where  $\Omega_t$ is the domain surrounded by a varying boundary $\Gamma_t$, i.e. $\Gamma_t = \partial \Omega_t$.
The Neumann boundary condition $\Bg_N$ can be viewed as an applied force on $\Gamma_t$, such as a surface tension of fluids \cite{ma23}.
For simplicity, we treat it as a given function.

\subsection{Flow maps}
The variation of $\Gamma_t$ has the form of \eqref{eq:Gammat}, namely
$\Gamma_t = \BX_F(t;0,\Gamma_0)$, and the forward boundary map is defined by
\begin{equation} \label{eq:nonlinear bd}
	\frac{\D}{\D t} \BX_F(t;s,\Bx_s) = \Bv(\BX_F(t;s,\Bx_s),t),\quad
	\BX_F(s;s,\Bx_s) = \Bx_s.
\end{equation}
Note that $\Bv$ is the solution of the problem.
Based on an ALE map $\Ca^n(\Bx,t):\Omega^n\to \Omega_t$, $(t\leq t_n)$, the equation in \eqref{eq:nonlienar} can be written as follows
\begin{equation}\label{eq:nonlinear compact}
	\frac{\D\, \Bv(\Ca^n(\Bx,t),t)}{\D t} \Big|_{t=t_n} =
	\Bw^n(\Bx,t_n) \nabla \Bv(\Bx,t_n) + \Delta \Bv(\Bx,t_n)+\Bf(\Bx,t_n),
\end{equation}
where $\Bw^n(\Bx,t_n) = \partial_t \Ca^n(\Bx,t)|_{t=t_n}$.
Here, the unknown variables are the flow velocity
$\Bv$ and the moving boundary $\Gamma_t$.
Equation \eqref{eq:nonlinear bd} governs the evolution of the boundary through the forward boundary map and Equation \eqref{eq:nonlinear compact} governs the dynamics of the fluid through an ALE map.
They form a nonlinear system. We adopt semi-implicit BDF schemes for solving them:
$\BX_F$ is computed explicitly at each time step, while the update of $\Bv$
is done implicitly.
The semi-discrete scheme reads as follows,
\begin{align}
	&\frac{1}{\tau}\sum_{i=0}^k a_i^k \BX_F^{n-1,n-i}(\Bx) = \sum_{i=1}^k b_i^k\Bv^{n-i}\circ \BX_F^{n-1,n-i}(\Bx) \quad \forall \Bx \in \Gamma^{n-1}, \label{eq:semi bd}\\
	&\frac{1}{\tau}\sum_{i=0}^k a_i^k \Bv^{n-i}\circ \Ca_k^n(\cdot,t_{n-i})
	=  \Bw_k^n(\Bx,t_n)\nabla \Bv^n+ \Delta \Bv^n+\Bf^n,
	\label{eq:semi equa}
\end{align}
where the coefficients $a_i^k$, $b_i^k$ are listed in Table~\ref{tab:SBDF} (cf. \cite{asc95}), $\Bv^{n-i} = \Bv(\cdot,t_{n-i})$, and $\Bf^n=\Bf(\cdot,t_{n})$.
Here $\BX_F^{n-1,n-i} = (\BX_F^{n-i,n-1})^{-1}$ is the inverse of $\BX_F^{n-i,n-1}$.
From \eqref{eq:semi bd}, an explicit forward map on the boundary is defined by
\begin{equation*}
	\BX_F^{n-1,n}(\Bx) = \frac{1}{a_0^k}\sum_{i=1}^k
	\big[\tau b_i^k\Bv^{n-i}\circ \BX_F^{n-1,n-i}(\Bx)- a_i^k \BX_F^{n-1,n-i}(\Bx)\big].
\end{equation*}

\renewcommand\arraystretch{1.1}
\begin{table}[htbp]
	\caption{Coefficients for $a_i^k,b_i^k$ schemes.}\label{tab:SBDF}\vspace{-3mm}
	\center 
	\setlength{\tabcolsep}{8.1mm}
	\begin{tabular}{ c|lllll}
		\toprule[1pt]
		\diagbox{$k$}{$(a_i^k,b_i^k)$}{$i$}  &$0$     & $1$     &$2$    & $3$   & $4$  \\  \hline
		$2$      &$(3/2,\cdotbf)$   & $(-2,2)$  & $(1/2,-1)$ & $(0,0)$    & $(0,0)$\\ 
		$3$      &$({11}/{6},\cdotbf)$   & $(-3,3)$  & $(3/2,-3)$ & $(-1/3,1)$ & $(0,0)$ \\ 
		$4$      &$({25}/{12},\cdotbf)$  & $(-4,4)$  & $(3,-6)$   & $(-4/3,4)$ &$(1/4,-1)$ \\ 
		\bottomrule[1pt]
	\end{tabular}
\end{table}

\subsection{Surface tracking algorithm via forward boundary map}
In practice, we construct a discrete approximation $\BX_{\Cf_h}^{n-1,n}$ 
to $\BX_F^{n-1,n}$ for surface tracking.
Suppose that the approximate boundary $\Gamma_{\eta}^{n-i}$, 
the discrete solutions $\Bv^{n-i}_h\in \BV(k,\Ct_h^{n-i})$,
and the approximate backward boundary maps 
$\BX_{\Cf_h}^{n-1,n-i}:\Gamma_\eta^{n-1}\to \Gamma_\eta^{n-i}$ have been obtained for $1\le i<k$. We use the information to construct the forward boundary map 
$\BX_{\Cf_h}^{n-1,n}:\Gamma_\eta^{n-1}\to \bbR^2$, explicitly, as follows
\begin{equation}\label{eq:nonlinear forward}
	\BX_{\Cf_h}^{n-1,n}(\Bx)=\frac{1}{a_0^k}\sum_{i=1}^k
	\big[\tau b_i^k\Bv_h^{n-i}\circ \BX_{\Cf_h}^{n-1,n-i}(\Bx)- a_i^k \BX_{\Cf_h}^{n-1,n-i}(\Bx)\big]\quad
	\forall \Bx \in \Gamma_{\eta}^{n-1}.
\end{equation}

Let $\Cp^0=\left\{\Bp^0_j: 0\le j\le J^0\right\}$ be the set of control points on the initial boundary  $\Gamma^0_\eta:=\Gamma_0$. Suppose that the arc length of $\Gamma^0_\eta$ between $\Bp^0_0$ and $\Bp^0_{j}$ equals to $L^0_j=j\eta$ for $1\le j\le J^0$, where $\eta:= L^0/J^0$ and $L^0$ is the arc length of $\Gamma^0_\eta$. For all $0\le m<n$, suppose we are given with the set of control points $\Cp^m=\{\Bp^m_j: 0\le j\le J^m\}\subset\Gamma^m_\eta$ and the parametric representation $\chibf_m$ of $\Gamma^m_\eta$, which satisfies
\begin{equation*}
	\chibf_m(L^m_j) = \Bp^m_j, \quad
	L^m_j = \sum_{i=0}^j\SN{\Bp^m_{i+1}-\Bp^m_{i}},\quad 0\le j\le J^m.
\end{equation*}
The set $ \Cp^n=\{\Bp_j^n: j=0,\cdots,J^n\}$ are obtained by 
\eqref{eq:nonlinear forward} and satisfy 
$|\Bp_{j+1}^n-\Bp_j^n|\sim \eta$.
We adopt the surface-tracking method in \cite[Algorithm~3.1 ]{maz21} to
construct a $C^2$ smooth boundary $\Gamma_{\eta}^n$ with cubic spline interpolation.

\subsection{Construction of an approximate ALE map}
Since ALE maps depend on boundary motions, the first task is to construct an approximation of the inverse map  $(\BX_{\Cf_h}^{n-1,n})^{-1}$, denoted by $\BX_{\Cf_h}^{n,n-1}$ without causing confusions. 
The backward flow map $\BX_{\Cf_h}^{n,n-1}$ in \eqref{eq:nonlinear forward} will also be used to construct $\Gamma_{\eta}^{n+1}$ at the next time step. The construction of $\BX_{\Cf_h}^{n,n-1}$ consists of two steps. 

{\bf Step~1. Construct an approximation of $\BX^{n-1,n}_{\Cf_h}$.} 
Let $\Gamma^{n-1}_{\eta,K} = \Gamma^{n-1}_{\eta}\cap K$ and let
$M$ be the number of control points in the interior of $K$,
which divide the curve into $M+1$ segments, namely,
\begin{equation}
\Gamma^{n-1}_{\eta,K} = \Gamma_{K,0}^{n-1} \cup \Gamma_{K,1}^{n-1}\cup \cdots \cup \Gamma_{K,M}^{n-1},
\quad \mathring{\Gamma}_{K,l}^{n-1}\cap \mathring{\Gamma}_{K,m}^{n-1} =\emptyset, 
\quad l\neq m.
\end{equation}
For each $\Gamma^{n-1}_{K,m}$, we take $k+1$ nodal points $\BA_0,\cdots,\BA_k$ quasi-uniformly on $\Gamma^{n-1}_{K,m}$ with $\BA_0,\BA_k\in \Cp^{n-1}$ (see Fig.~\ref{fig:cut element and illustration}(a)). Let $\hat{I}=[0,1]$ be the reference interval.
The two isoparametric transforms are defined as
\begin{align}\label{FmGm-T}
\BF_{{K,m}}(\xi) := \sum_{i=0}^k
\BA_{i}b_{i}(\xi),\quad
\BG_{{K,m}}(\xi) := \sum_{i=0}^k \BA_i^n b_{i}(\xi)\quad
\forall\,\xi\in\hat{I},
\end{align}
where $b_{i}\in P_k(\hat{I})$ satisfies $b_{i}(l/k)=\delta_{i,l}$ and $\BA_i^n = \BX_{\Cf_h}^{n-1,n}
\big(\BA_{i}\big)$.
They define a homeomorphism from 
$\Gamma_{K,m}^{n-1}$ to $\tilde{\Gamma}_{K,m}^n :=\big\{G_{{K,m}}(\xi):\xi\in\hat{I}\big\}$:
\begin{equation}\label{Kmn}
\tilde\BX^{n-1,n}_{{K,m}} :=G_{{K,m}}\circ F_{{K,m}}^{-1}.
\end{equation}
Define $\tilde{\Gamma}_K^n:=\cup_{m=0}^M \tilde{\Gamma}_{K,m}^n$, and 
$\tilde{\Gamma}^n_{\eta}:=\cup_{K\in\Ct^{n-1}_{h,B}}\tilde{\Gamma}_K^n$. 
We obtain a homeomorphism $\tilde\BX^{n-1,n}_{\Cf_h}$: $\Gamma^{n-1}_\eta\to \tilde{\Gamma}^n_{\eta}$ which is defined piecewisely as follows (see Fig.~\ref{fig:cut element and illustration}(b))
\begin{equation}\label{tXn}
\tilde\BX^{n-1,n}_{\Cf_h} = \tilde\BX^{n-1,n}_{K} \quad \hbox{on}\;\; K\cap\Gamma_\eta^{n-1},\qquad
\tilde\BX^{n-1,n}_{K}\big|_{\Gamma_{K,m}^{n-1}} = \tilde\BX^{n-1,n}_{{K,m}} .
\end{equation} 
\begin{figure}
\centering
\begin{subfigure}[A cut element with markers]{
		\centering
		\begin{tikzpicture}[scale =1.8]
			\draw[thick,fill = white] (0,0) rectangle (2,2); 
			\draw [thick,red,fill=yellow] (0.5,0) to [out=90,in=190](2,1.5) --(2,0)--(0.5,0);
			\draw[thick,black] (2,1.5) --(2,0)--(0.5,0);
			\draw [blue,fill=blue] (0.5,0) circle [radius=0.04];
			\draw [blue,fill=blue] (2,1.5) circle [radius=0.04];	
			\draw[black,fill=black](1,1.05)circle[radius=0.03];
			\draw[black,fill=black](1.2,1.2)circle[radius=0.03];
			\draw[blue,fill=blue] (0.8,0.85) circle [radius=0.04];
			\draw[blue,fill=blue] (1.4,1.3) circle [radius=0.04];
			\node at (0.63,0.9) {\tiny{$\BA_0$}};
			\node at (0.85,1.12) {\tiny{$\BA_1$}};
			\node at (1.,1.3) {\tiny{$\BA_2$}};
			\node at (1.25,1.45) {\tiny{$\BA_3$}};
			\node at (0.9,0.45) {\tiny{$\Gamma_{K,0}^{n-1}$}};  
			\node at (1.25,0.95) {\tiny{$\Gamma_{K,1}^{n-1}$}};
			\node at (1.75,1.25) {\tiny{$\Gamma_{K,2}^{n-1}$}};
		\end{tikzpicture}
	}
\end{subfigure}
\begin{subfigure}[An illustration of $\tilde \BX_{\Cf_h}^{n-1,n}$ for $k=3$.]
	{
		\centering
		\begin{tikzpicture}[scale =1]
			\draw [thick,red] (-1.5,0+0.5) to [out=90,in=190](0,1.6+0.5);
			\draw [thick] (5.5-2.5,0.0+0.3) .. controls (6.6-2.5,0.9+0.3) and (5.5-2.5,1.0+0.3) .. (6.5-2.5,2.4+0.3);
			\draw [thick,blue] (5.45-2.5,0+0.3) .. controls (6.7-2.5,0.9+0.3) and (5.30-2.5,1.0+0.3) .. (6.55-2.5,2.4+0.3);		
			\node at (-0.7,2.7) {\small{$\Gamma^{n-1}_{K,1}$}};

			\draw[blue,fill=blue] (-0.45,1.95) circle [radius=0.06];
			\draw[blue,fill=blue] (-1.48,0.73) circle [radius=0.06];
			\draw[black,fill=black] (-1,1.61) circle [radius=0.05];
			\draw[black,fill=black] (-1.35,1.15) circle [radius=0.05];

			\draw[ultra thick, ->] (-0.4,1.1)--(0.8,1.1);
			\node at (0.25,1.5) {\footnotesize{$\BX_{\Cf_h}^{n-1,n}$}};
			
			\node at (-1.48-0.7,0.73) {\footnotesize{$\BA_0$}};
			\node at (-1.35-0.7,1.15) {\footnotesize{$\BA_1$}};
			\node at (-1-0.7,1.65) {\footnotesize{$\BA_2$}};
			\node at (-0.45-0.5,2.1) {\footnotesize{$\BA_3$}};
			
			\node at (2.12-0.7, 0.3) {\footnotesize{$\BA_0^n$}};
			\node at (2.0-0.7,1.15) {\footnotesize{$\BA_1^{n}$}};
			\node at (2.05-0.6,1.75) {\footnotesize{$\BA_2^n$}};
			\node at (1.7,2.40) {\footnotesize{$\BA_3^n$}};
			
			\draw[fill=green] (1.76,0.32+0.3) circle [radius=0.06]; 
			\draw[fill=green] (1.8,1.08+0.3) circle [radius=0.06]; 
			\draw[fill=green] (2.03,1.68+0.3) circle [radius=0.06]; 
			\draw[fill=green] (2.3,2.2+0.3) circle [radius=0.06];  

			\draw[ultra thick, -latex] (2.2,1.1)--(3.34,1.1);

			\draw[fill=green] (5.72-2.5,0.22+0.3) circle [radius=0.06];
			\draw[fill=green] (6.04-2.5,0.82+0.3) circle [radius=0.06];
			\draw[fill=green] (6.06-2.5,1.45+0.3) circle [radius=0.06];
			\draw[fill=green] (6.3-2.5,2.1+0.3) circle [radius=0.06];
			\node at (4-0.5,2.75) {\small{$\tilde{\Gamma}^{n}_{K,1}$}};
			\node at (4.5-0.5,0.5)[blue] {\small{${\Gamma}^{n}_{\eta}$}};
			
		\end{tikzpicture}
	} 
\end{subfigure}
\caption{Left: A cut element with  markers $\Cp^{n-1}$(blue points), 
	$3$ segments of $\Gamma_{\eta,K}^{n-1}$, (red lines),
	quasi-uniformly distributed points $\BA_i$ on $\Gamma_{K,1}^{n-1}$.
	Right: The black line $\tilde \Gamma_{K,m}^n$  is obtained by Lagrange interpolation based on the points $\BA_i^n$, $0\leq i\leq 3$ (green points), which is approximation of $\Gamma_{\eta}^n$ (blue line).
}
\label{fig:cut element and illustration}
\end{figure}
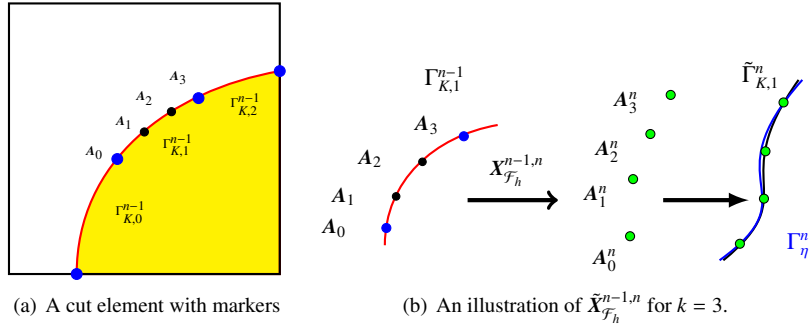 

{\bf Step~2. Construct the backward flow map $\BX^{n,n-1}_{\Cf_h}$.}
It is easy to see that $\tilde \BX_{K}^{n-1,n}$ is the $k^{\text{th}}$-order Lagrange interpolation of $\BX_{\Cf_h}^{n-1,n}(\Gamma_{\eta,K}^{n-1})$.
It is natural to use the inverse of $\tilde \BX_{\Cf_h}^{n-1,n}$ to approximate 
$(\BX_{\Cf_h}^{n-1,n})^{-1}$.
Finally, we define the backward flow map by
$\BX_{\Cf_h}^{n,n-1} := (\tilde \BX_{\Cf_h}^{n-1,n})^{-1}\circ \BP_n$, 
where $\BP_n:\Gamma_{\eta}^n \to \tilde\Gamma_{\eta}^{n}$ is the projection defined by $\BP_n(\Bx) := \argmin\limits_{\By\in \tilde\Gamma_{\eta}^n} \SN{\By-\Bx}$ for any $\Bx\in \Gamma_\eta^n$.

The construction of $\BX_{\Cf_h}^{n,n-1}$ is similar to \cite[Section~3.4]{maz21} but much more computationaly economic, since we only need the boundary map from $\Gamma_{\eta}^n$ to $\Gamma_{\eta}^{n-1}$ instead of the volume map from $\Omega_{\eta}^n$ to $\Omega_{\eta}^{n-1}$. Finally, the ALE map $\BX_h^{n,n-1}\in \BV(k,\Ct_h^n)$ is defined as in \eqref{eq:discrte Xh} by letting $\Bg_{\eta}^{n,n-1}=\BX_{\Cf_h}^{n,n-1}$, and $\Ca_{k,h}^n$, $\Bw_{k,h}^n$, and $\BX_h^{n,n-i}$ are obtained similarly as in \eqref{eq: ALE Ah}--\eqref{def:Xh n-i} for $i=2,\cdots,k$.
\subsection{The discrete scheme}
In view of \eqref{eq:semi equa}, the discrete scheme for solving \eqref{eq:nonlienar} is to
find $\Bv_h^n \in \BV(k,\Ct_h^n)$ such that 
\begin{equation*} 
\frac{1}{\tau}(\Lambdabf_{\Bw_h}^k \ul{\BV_h^n} ,\varphibf_h)_{\Omega_\eta^n} 
+( \nabla \Bv_h^n,\nabla \varphibf_h)_{\Omega_\eta^n} 
+\mathscr{J}_1^n(\Bv_h^n,\varphibf_h)
=(\Bf^n, \varphibf_h)_{\Omega_\eta^n}+\left<\Bg_{N}, \varphibf_h\right>_{\Gamma_{\eta}^n},
\end{equation*}
for any $\varphibf_h\in \BV(k,\Ct_h^n)$, where
\begin{equation*}
\frac{1}{\tau} \Lambdabf_{\Bw_h}^k \ul{\BV_h^n} 
= \frac{1}{\tau}\sum_{i=0}^k a_i^k \Bv_h^{n-i}\circ \BX_h^{n,n-i} - \Bw_{k,h}^n \cdot \nabla \Bv_h^n.
\end{equation*} 

\section{A two-phase problem}
In this section, we focus on a two-phase model with a time-varying interface. 
Let $D\subset\bbR^2$ be an open square with boundary $\Sigma=\partial D$.
For any $t\geq 0$, let $\Omega_{1,t}$ and $\Omega_{2,t}$ be two time-varying sub-domains of $D$ occupied by two immiscible fluids, respectively.
We assume $\Gamma_t = \partial \Omega_{1,t}\subset D$ and
$\partial \Omega_{2,t}=\Sigma \cup \Gamma_t$.
Consider the linear interface problem:
\begin{equation}\label{eq:interface}
\frac{\partial u_i}{\partial t}  -\nu_i \Delta u_i = f_i\;\;\; \text{in}\; \Omega_{i,t},\;\; i=1,2,\quad 
\jump{u}=g_D\;\; \text{on}\;\Gamma_{t},\quad
\jump{\partial_{\Bn} u}=g_N \;\; \text{on}\;\Gamma_{t},
\end{equation}
where $\jump{u}=u_1-u_2$ denotes the jump of $u$ across $\Gamma_t$, 
the viscosities $\nu_i$ are positive constants,  
and $\Bn$ is the unit normal on $\Gamma_t$ pointing to $\Omega_{2,t}$.
The interface $\Gamma_t$ is driven by $\Bv$ 
and has the same form as \eqref{eq:Gammat}.
We treat the model as two free-boundary problems which are coupled with the interface conditions. 

\subsection{Finite element spaces}
Suppose we have obtained the approximate interface $\Gamma_\eta^n$ 
by using some interface-tracking algorithm mentioned previously.
Let $\Omega_{\eta,1}^n$ be an approximate domain of 
$\Omega_1^n=\Omega_{1,t_n}$ such that
$\Gamma_\eta^n = \partial \Omega_{\eta,1}^n$.
Define $\Omega_{\eta,2}^n = D\backslash \Omega_{\eta,1}^n$.
Similar to the single phase case, we define
\begin{equation} \label{eq:interface delta omega}
\Omega_{\delta,i}^n = :\Big\{\Bx \in \bbR^2: \min_{\By\in \Omega_{\eta,i}^n}|\Bx -\By| \leq  0.5 \tau\Big\}, \quad i=1,2.
\end{equation}
Let $\Ct_h$ be the uniform partition of $D$ into closed squares of side-length $h$. It generates the covers of $\Omega^n_{\delta,i}$, $i=1,2$ and the cover of $\Gamma^n_{\eta}$
\begin{align*}
\Ct^n_{h,i} := \left\{K\in \Ct_h^n: \bar{K} \cap \bar{\Omega}_{\delta,i}^n \neq \emptyset\right\},
\quad
\Ct^n_{B,i} := \left\{K\in \Ct_h:\; 
\bar K\cap(\Gamma^n_{\eta}\cup\partial \Omega_{\delta,i}^n) 
\neq \emptyset \right\}.
\end{align*}
We define $\tilde{\Omega}_{i}^n:=\bigcup_{K\in\Ct^n_{h,i}}K$.
Clearly $\Omega^n_{\eta,i}\subset\tilde{\Omega}^n_{i}$. Define
\begin{equation*}
\Ce_{i,B}^{n}= \big\{E\in\Ce_h: \; E \not\subset \partial\tilde\Omega^n_{i}
\;\; \hbox{and}\;\; \exists K\in \Ct^n_{B,i}\;\;
\hbox{s.t.}\;\; E\subset\partial K\big\}.
\end{equation*}
The mesh is shown in Fig.~\ref{fig:interface Omega mesh}. We define the finite element spaces
\begin{align*}
&\Cw_{h,0} = \{v\in H^1_0(D), v|_K\in Q_k(K),\forall K\in \Ct_h\},\\
&\mathcal{\BW}_h^n:=  \big\{(v_{h,1},v_{h,2}):
v_{h,i} =v_h|_{\tilde\Omega^n_i},\; v_h\in \Cw_{h,0},\; i=1,2\big\}. 
\end{align*}

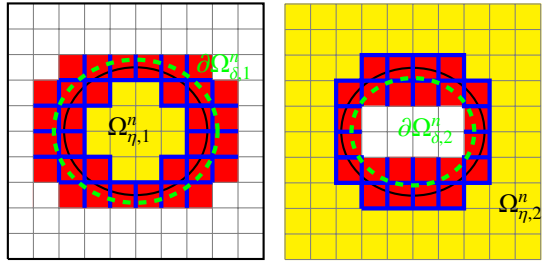
\begin{figure}[t]
\centering
\begin{subfigure}
	{		
		\centering
		\begin{tikzpicture}[scale =1.7] \filldraw[red](0.2*3,0.2*2)--(0.2*3,0.2*3)--(0.2*2,0.2*3)--(0.2*2,0.2*7)--(0.2*3,0.2*7)--(0.2*3,0.2*8)--(0.2*4,0.2*8)--(0.2*7,0.2*8)--(0.2*7,0.2*7)--(0.2*8,0.2*7)--(0.2*8,0.2*3)--(0.2*7,0.2*3)--(0.2*7,0.2*2)--(0.2*3,0.2*2);  		\filldraw[yellow](0.2*3,0.2*6)--(0.2*4,0.2*6)--(0.2*4,0.2*7)--(0.2*5,0.2*7)--(0.2*6,0.2*7)--(0.2*6,0.2*6)--(0.2*7,0.2*6)--(0.2*7,0.2*5)--(0.2*7,0.2*4)--(0.2*6,0.2*4)--(0.2*6,0.2*3)--(0.2*5,0.2*3)--(0.2*4,0.2*3)--(0.2*4,0.2*4)--(0.2*3,0.2*4)--(0.2*3,0.2*5)--(0.2*3,0.2*6);
			\filldraw[red](0.2*1,0.2*3)--(0.2*2,0.2*3)--(0.2*2,0.2*7)--(0.2*1,0.2*7)--(0.2*1,0.2*2);
			\filldraw[red](0.2*8,0.2*3)--(0.2*9,0.2*3)--(0.2*9,0.2*7)--(0.2*8,0.2*7)--(0.2*8,0.2*3);
			\filldraw[red](0.2*2,0.2*7)--(0.2*3,0.2*7)--(0.2*3,0.2*8)--(0.2*2,0.2*8)--(0.2*2,0.2*7);
			\filldraw[red](0.2*7,0.2*7)--(0.2*8,0.2*7)--(0.2*8,0.2*8)--(0.2*7,0.2*8)--(0.2*7,0.2*7);
			\filldraw[red](0.2*2,0.2*2)--(0.2*3,0.2*2)--(0.2*3,0.2*3)--(0.2*2,0.2*3)--(0.2*2,0.2*2);
			\filldraw[red](0.2*7,0.2*2)--(0.2*8,0.2*2)--(0.2*8,0.2*3)--(0.2*7,0.2*3)--(0.2*7,0.2*2);
			\draw[black, thick] (1,1) ellipse [x radius=0.56cm, y radius=0.5cm];
			\filldraw[step =0.2cm,gray,thin] (0,0) grid (2cm,2cm);
			\draw [blue, ultra thick] (0.2*3,0.2*3)--(0.2*3,0.2*7)--(0.2*7,0.2*7)--(0.2*7,0.2*3)--(0.2*3, 0.2*3);
			\draw [blue,ultra thick] (0.2*1,0.2*4)--(0.2*3,0.2*4);
			\draw [blue, ultra thick] (0.2*1,0.2*5)--(0.2*3,0.2*5);
			\draw [blue,ultra thick] (0.2*1,0.2*6)--(0.2*3,0.2*6);
			\draw [blue,ultra thick] (0.2*2,0.2*4)--(0.2*3,0.2*4);
			\draw [blue,ultra thick] (0.2*2,0.2*3)--(0.2*2,0.2*7);
			\draw [blue,ultra thick] (0.2*4,0.2*2)--(0.2*4,0.2*4)--(0.2*3, 0.2*4);
			\draw [blue,ultra thick] (0.2*4,0.2*8)--(0.2*4,0.2*6)--(0.2*3, 0.2*6);
			\draw [blue,ultra thick] (0.2*6,0.2*2)--(0.2*6,0.2*4)--(0.2*8, 0.2*4);
			\draw [blue,ultra thick] (0.2*6,0.2*8)--(0.2*6,0.2*6)--(0.2*8, 0.2*6);
			\draw [blue,ultra thick] (0.2*5,0.2*2)--(0.2*5,0.2*3);
			\draw [blue,ultra thick] (0.2*7,0.2*5)--(0.2*9,0.2*5);
			\draw [blue,ultra thick] (0.2*5,0.2*7)--(0.2*5,0.2*8);
			\draw [blue,ultra thick] (0.2*7,0.2*4)--(0.2*9,0.2*4);
			\draw [blue,ultra thick] (0.2*7,0.2*6)--(0.2*9,0.2*6);
			\draw [blue,ultra thick] (0.2*2,0.2*3)--(0.2*3,0.2*3);
			\draw [blue,ultra thick] (0.2*2,0.2*7)--(0.2*3,0.2*7);
			\draw [blue,ultra thick] (0.2*7,0.2*7)--(0.2*8,0.2*7);
			\draw [blue,ultra thick] (0.2*7,0.2*3)--(0.2*8,0.2*3);
			\draw [blue,ultra thick] (0.2*8,0.2*3)--(0.2*8,0.2*7);
			\draw [blue,ultra thick] (0.2*3,0.2*2)--(0.2*3,0.2*3);
			\draw [blue,ultra thick] (0.2*3,0.2*7)--(0.2*3,0.2*8);
			\draw [blue,ultra thick] (0.2*7,0.2*2)--(0.2*7,0.2*3);
			\draw [blue,ultra thick] (0.2*7,0.2*7)--(0.2*7,0.2*8);
			\node[left] at (0.2*6,0.2*5) {$\Omega_{\eta,1}^n$};
			\node[right,green,thick] at (0.2*7,0.2*7.5) {$\partial \Omega_{\delta,1}^n$};
			\draw[ green, ultra thick,dashed] (1,1) ellipse [x radius=0.64cm, y radius=0.56cm];
			\draw[black,thick](0,0)--(2,0)--(2,2)--(0,2)--(0,0);
		\end{tikzpicture}\label{fig:sub1}
	}
\end{subfigure}%
\begin{subfigure}
	{		\centering
		\begin{tikzpicture}[scale =1.7]
			\filldraw[yellow](0,0)--(0.2*10,0)--(0.2*10,0.2*10)--(0,0.2*10)--(0,0);
			\filldraw[red](0.2*3,0.2*2)--(0.2*3,0.2*3)--(0.2*2,0.2*3)--(0.2*2,0.2*7)--(0.2*3,0.2*7)--(0.2*3,0.2*8)--(0.2*4,0.2*8)--(0.2*7,0.2*8)--(0.2*7,0.2*7)--(0.2*8,0.2*7)--(0.2*8,0.2*3)--(0.2*7,0.2*3)--(0.2*7,0.2*2)--(0.2*3,0.2*2);  		
			\filldraw[white](0.2*3,0.2*6)--(0.2*4,0.2*6)--(0.2*4,0.2*7)--(0.2*5,0.2*7)--(0.2*6,0.2*7)--(0.2*6,0.2*6)--(0.2*7,0.2*6)--(0.2*7,0.2*5)--(0.2*7,0.2*4)--(0.2*6,0.2*4)--(0.2*6,0.2*3)--(0.2*5,0.2*3)--(0.2*4,0.2*3)--(0.2*4,0.2*4)--(0.2*3,0.2*4)--(0.2*3,0.2*5)--(0.2*3,0.2*6);
			\filldraw[red](0.2*4,0.2*3)--(0.2*6,0.2*3)--(0.2*6,0.2*4)--(0.2*4,0.2*4)--(0.2*4,0.2*3);
			\filldraw[red](0.2*4,0.2*6)--(0.2*6,0.2*6)--(0.2*6,0.2*7)--(0.2*4,0.2*7)--(0.2*4,0.2*6);
			\draw[black, thick] (1,1) ellipse [x radius=0.56cm, y radius=0.5cm];
			\filldraw[step =0.2cm,gray,thin] (0,0) grid (2cm,2cm);
			\draw[blue,ultra thick](0.2*3,0.2*2)--(0.2*7,0.2*2)--(0.2*7,0.2*3)--(0.2*8,0.2*3)--(0.2*8,0.2*7)--(0.2*7,0.2*7)--(0.2*7,0.2*8)--(0.2*3,0.2*8)--(0.2*3,0.2*7)--(0.2*2,0.2*7)--(0.2*2,0.2*3)--(0.2*3,0.2*3)--(0.2*3,0.2*2);
			\draw [blue,ultra thick] (0.2*4,0.2*8)--(0.2*4,0.2*7)--(0.2*3,0.2*7)--(0.2*3,0.2*6);
			\draw [blue,ultra thick] (0.2*2,0.2*4)--(0.2*3,0.2*4);
			\draw [blue, ultra thick] (0.2*2,0.2*5)--(0.2*3,0.2*5);
			\draw [blue,ultra thick] (0.2*2,0.2*6)--(0.2*3,0.2*6);
			\draw [blue,ultra thick] (0.2*2,0.2*4)--(0.2*3,0.2*4)--(0.2*3,0.2*3)--(0.2*4,0.2*3);
			\draw [blue,ultra thick] (0.2*6,0.2*7)--(0.2*7,0.2*7)--(0.2*7, 0.2*6)--(0.2*8,0.2*6);
			\draw [blue,ultra thick] (0.2*6,0.2*3)--(0.2*7,0.2*3)--(0.2*7, 0.2*4)--(0.2*8,0.2*4);
			\draw [blue,ultra thick] (0.2*6,0.2*8)--(0.2*6,0.2*7);
			\draw [blue,ultra thick] (0.2*5,0.2*8)--(0.2*5,0.2*7);
			\draw [blue,ultra thick] (0.2*4,0.2*2)--(0.2*4,0.2*3);
			\draw [blue,ultra thick] (0.2*3,0.2*2)--(0.2*3,0.2*3);
			\draw [blue,ultra thick] (0.2*5,0.2*2)--(0.2*5,0.2*3);
			\draw [blue,ultra thick] (0.2*6,0.2*2)--(0.2*6,0.2*3);
			\draw [blue,ultra thick] (0.2*7,0.2*5)--(0.2*8,0.2*5);
			\draw [blue,ultra thick] (0.2*5,0.2*7)--(0.2*5,0.2*8);	
			\draw [blue,ultra thick] (0.2*4,0.2*3)--(0.2*4,0.2*4);	
			\draw [blue,ultra thick] (0.2*5,0.2*3)--(0.2*5,0.2*4);	
			\draw [blue,ultra thick] (0.2*6,0.2*3)--(0.2*6,0.2*4);	
			\draw [blue,ultra thick] (0.2*4,0.2*6)--(0.2*4,0.2*7);	
			\draw [blue,ultra thick] (0.2*5,0.2*6)--(0.2*5,0.2*7);	
			\draw [blue,ultra thick] (0.2*4,0.2*3)--(0.2*6,0.2*3);	
			\draw [blue,ultra thick] (0.2*4,0.2*7)--(0.2*6,0.2*7);	
			\draw [blue,ultra thick] (0.2*6,0.2*6)--(0.2*6,0.2*7);	
			\node[right] at (0.2*8,0.2*2) {$\Omega_{\eta,2}^n$};
			\node[right,green,ultra thick] at (0.2*4,0.2*5) {$\partial\Omega_{\delta,2}^n$};
			\draw[ green,ultra thick,dashed] (1,1) ellipse [x radius=0.48cm, y radius=0.42cm];
		\end{tikzpicture}\label{fig:sub2}
	}
\end{subfigure}
\caption{ Left figure: $\tilde{\Omega}_1^n$ (red and yellow squares), $\Ct_{B,1}^n$ (the set of red squares), $\Ce_{1,B}^n$ (blue edges) and $\partial \Omega_{\delta,1}^n$ (green edge).
	Right figure: $\tilde{\Omega}_2^n$ (red and yellow squares), $\Ct_{B,2}^n$ (the set of red squares), $\Ce_{2,B}^n$ (blue edges) and $\partial \Omega_{\delta,2}^n$ (green edge).	
} \label{fig:interface Omega mesh}
\end{figure}

\subsection{Discrete ALE map and fully discrete scheme}

The ALE-UFE framework is used to each phase of the interface problem, and leads to the fully discrete scheme by coupling the discrete formulations of both phases with interface conditions.

First we construct ALE maps piecewise.
The discrete ALE map $\Ca_{k,h,1}^n$, $\Bw_{k,h,1}^n$ are exactly the same as $\Ca_{k,h}^n$ and $\Bw_{k,h}^n$ defined in 
section~\ref{sec:construction ALE}. To construct $\Ca_{k,h,2}^n$, we let
$\BX_{h,2}^{n,n-1}\in \BV(k,\Ct_{h,2}^n)$ be the solution satisfying $\BX_{h,2}^{n,n-1}\big|_{\partial D} = \BI$ and 
\begin{equation}\label{eq:Xh1}
\mathscr{A}_h^n(\BX_{h,2}^{n,n-1},\Bv_h) =
\Cf_{\Gamma_\eta^n}(\Bg_\eta^{n,n-1},\Bv_h)
\qquad \forall \Bv_h \in \BV_0(k,\Ct_{h,2}^n),
\end{equation}
where $\BV_0(k,\Ct_{h,2}^n):=\{\Bv\in \BV(k,\Ct_{h,2}^n);\Bv|_{\partial D}=0 \}$ and $\Bg_{\eta}^{n,n-1}$ is defined in section~\ref{sec:construction ALE}.
Then we define $\Ca_{k,h,2}^n = \sum_{i=0}^k l_n^i \BX_{h,2}^{n,n-i}$ and $\Bw_{k,h,2}^n = \sum_{i=0}^k (l_n^i)' \BX_{h,2}^{n,n-i}$.

Next we use the backward flow maps   
to discretize the time derivatives 
\begin{equation*}
\frac{1}{\tau}\Lambda_{\Bw_{h,j}}^k \ul{\BU^n_{h,j}}
= \frac{1}{\tau} \Lambda^k\ul{\BU^n_{h,j}} 
-\Bw_{k,h,j}^n\cdot \nabla u_{h,j}^n,\quad 
\Lambda^k\ul{\BU^n_{h,j}} := \sum_{i=0}^k \lambda_i^k u_{h,j}^{n-i}\circ \BX_{h,j}^{n,n-i}.
\end{equation*}
The discrete scheme is to find $u_h^n \in \BW_h^n$ such that
\begin{align*}
\sum_{j=1}^2\Big[
\Big(\frac{1}{\tau}\Lambda^k_{\Bw_{h,j}}\ul{\BU^n_{h,j}} -f^n_h,v_{j}\Big)_{\Omega_{\eta,j}^n}
+\mathscr{A}_{j}^n(u_{h,j}^n,v_j)\Big]
+\mathscr{S}_I^n(u_h^n,v)
+\mathscr{J}_{I}^n(u_h^n,v)  = 0,
\end{align*}
for all $v \in \BW_h^n$ with $v_j=v|_{\Omega_{\eta,j}^n}$, where 
\begin{align*}
\mathscr{A}_{j}^n(u,v) &:= 
(\nu_j \nabla u,\,\nabla v)_{\Omega_{\eta,j}^n}
+\mathscr{J}_{1,j}(u,v),\\
\mathscr{J}_{1,j}(u,v)&:=
\sum_{E\in \Ce_{j,B}^{n}} 
\sum_{l=1}^k h^{2l-1}\left< \nu_j 
\jump{\partial_{\Bn}^l u_{j}},
\jump{\partial_{\Bn}^l v_{j}}\right>_E.
\end{align*}
Moreover, $\mathscr{S}_{I}^n$ and $\mathscr{J}_{I}^n$ are bilinear forms which combine the two phases
\begin{align*}
\mathscr{S}_{I}^n(u,v) :=\,& 
-\left<  \avg{\nu\partial_{\Bn} u},\jump{v} \right>_{{\Gamma_\eta^n}}
-\left<\avg{\nu \partial_{\Bn} v},\jump{u}\right>_{{\Gamma_\eta^n}}, \\ 
\mathscr{J}_{I}^n(u,v) :=\,& \gamma_0 h^{-1}\avg{\nu} 
\left<\jump{u},\jump{v}\right>_{\Gamma_\eta^n}. 
\end{align*}
where $\gamma_0$ is a positive penalty coefficient and
$\Bn$ is the unit outward normal to $\Gamma_\eta^n$ from $\Omega_{\eta,1}^n$ to $\Omega_{\eta,2}^n$.
Here we have used the average operator
\begin{equation*}
\avg{a} = \kappa_1 a_1 + \kappa_2 a_2,\quad 
\kappa_1 = \nu_2/({\nu_1+\nu_2}),\quad \kappa_2 ={\nu_1}/({\nu_1+\nu_2}).
\end{equation*} 

\section{Numerical experiments}
\label{sec:num}

In this section, we demonstrate the ALE-UFE method with all 
the models in the previous sections. 
Throughout the section, we choose $\gamma_0=1000$. 
To simplify computations, we set the pre-calculated initial values by the exact solution, namely, $u^j_h=u(\cdot,t_j)$, for $0\le j\le k-1$.
\subsection{One-phase linear problem}
\label{subsec:Free boundary problems}
In this section, we consider a one-phase problem where 
the velocity of the moving boundary is given by an analytic function.
The exact solution is set by $u = \sin(\pi (x_1+t))\sin(\pi (x_2+t))$. 
The right-hand side and the boundary values in \eqref{cd-model} are set by $u$. The mesh of the evolution domain is shown in Fig.~\ref{fig:Omega_linear}. The boundary is varying according to 
\begin{equation*}
x_1(t)= \frac{x_1(0)}{1+0.2\sin(2t)}+\frac{\sin(2t)}{16},\quad
x_2(t)= \frac{x_2(0)}{1-0.25\sin(2t)}+\frac{\sin(2t)}{16}.
\end{equation*}
The initial boundary is a circle centered at $(0.5,0.5)^{\top}$ with radius $1/8$. The numerical error is measured by
\begin{equation*}
e^N=\bigg(\|u(\cdot,T)-u_h^{N}\|_{L^2(\Omega_\eta^N)}^2
+\tau \sum_{n=k}^N |u-u_h^n|_{H^1(\Omega_\eta^n)}^2\bigg)^{1/2}.
\end{equation*}
\begin{figure}[htbp]
\centering
\includegraphics[width=1\textwidth]{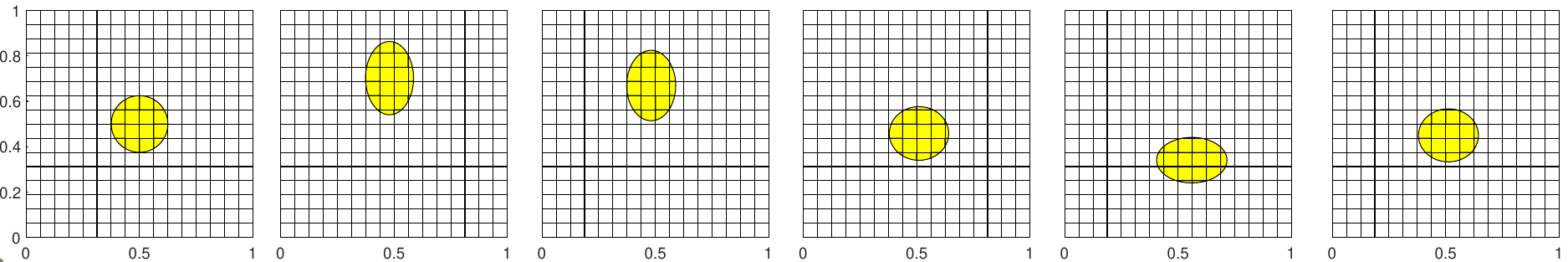}
\caption{\small The moving domain $\Omega_{\eta}^n$ at $t_n=0$, $0.2$, $0.4$, $0.6$, $0.8$ and $1$ ($h=1/16$).}	\label{fig:Omega_linear}
\end{figure}
Numerical results for $k=3,4$ are shown in Tables~\ref{tab:conv}. Optimal convergence rates $e^N =O(\tau^k)$ are obtained for both the third- and fourth-order methods.
Although the location of the boundary is given explicitly, the UCFE method in \cite{ma21} is not applicable in this case due to the lack of flow velocity inside the domain and the absence of  moving domains that maintain the same volume.
Therefore, the ALE-UFE method has a wider range of applications than the UCFE mehtod. 
\begin{table}[htbp]
\center 
\setlength{\tabcolsep}{12.5mm} 
\caption{Convergence rates for section~\ref{subsec:Free boundary problems}.} \label{tab:conv}
\begin{tabular}{ lcccc}
	\toprule[2pt]
	$h=\tau$     &$e^N\; (k=3)$       &rate   &$e^N\; (k=4)$       &rate      \\ \toprule[2pt] 
	1/16 &6.16e-03 &-     &1.91e-3 &-             \\ 
	1/32 &7.94e-04 &2.95  &1.25e-4 &3.93 \\ 
	1/64 &1.00e-04 &2.98  &9.97e-6 &3.97 \\ 
	1/128&1.25e-05 &2.99  &5.01e-7 &3.98\\  \toprule[2pt] 
\end{tabular}
\end{table}

\subsection{Nonlinear problem}\label{nonlinear}

Now we demonstrate the ALE-UFE method with a nonlinear problem where the boundary motion is specified by the solution. We require $\Omega_T=\Omega_0$ to check the accuracy of interface-tracking. 
The exact solution is set by
\begin{equation*}
\Bv = \cos(\pi t/3)\big[\sin(2\pi x_1)\sin(2\pi x_2), \cos(2\pi x_1)\cos(2\pi x_2)\big]^{\top}.
\end{equation*}
The initial domain is a disk with radius $R=0.15$ and centered at $(0.5,0.5)$. 
The domain is stretched into an inverted U shape at $t=1.5$ and returns to its initial shape at $T=3$ (see Fig.~\ref{fig:Omega_nonlinear}).
\begin{figure}[htbp]
\centering
\includegraphics[width=1\textwidth]{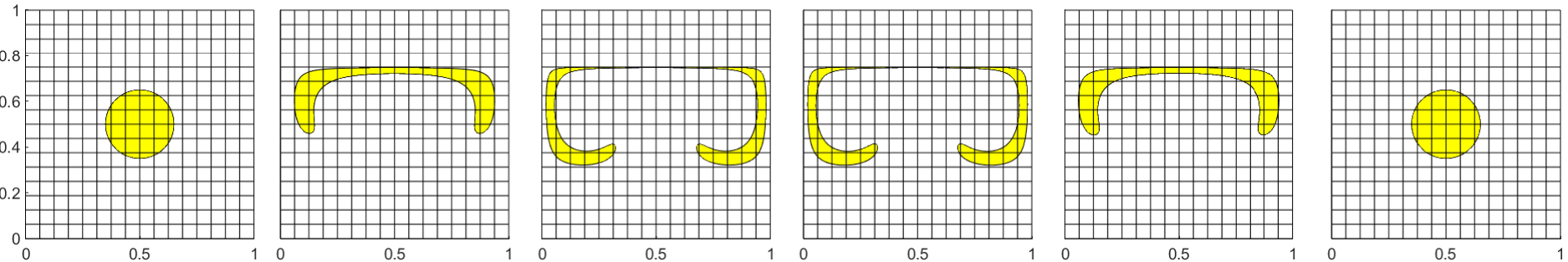}
\caption{\small Approximate domains at $t_n =0$, $0.6$, $1.2$, $1.8$, $2.4$ and $3$ ($h=1/16$).}	\label{fig:Omega_nonlinear}
\end{figure}

The source term and the Neumann condition of \eqref{eq:nonlienar} are given by 
\begin{equation*}
\Bf = \partial_t \Bv -\Delta \Bv \quad \text{in}\; \Omega_t, \quad
\Bg_{N} =\nabla \Bv \cdot \Bn \quad \text{on } \; \Gamma_t.
\end{equation*}
Define the boundary-tracking errors
\begin{align*}
e_{0,\Omega} &= \sum_{K\in \Ct_h} |\text{area}(\Omega_T\cap K)-\text{area}(\Omega_{\eta}^N\cap K)|,\\
e_{1,\Omega} &= \bigg[\sum_{n=k}^N \tau \Big( \sum_{K\in \Ct_h}
\big|\text{area}(\Omega_{\text{ref}}^n\cap K)-\text{area}(\Omega_{\eta}^n\cap K)\big|\Big)^2\bigg]^{1/2}.
\end{align*}
Here $\Omega_{\text{ref}}^n$ is the reference domain computed with the finest grid and the smallest time step.
Approximation errors are computed on the numerically tracked domain
\begin{equation*} 
e_0 = \bigg(\sum_{n=k}^N \tau \|\Bv(\cdot,t_n)-\Bv_h^n\|^2_{\BL^2(\Omega_{\text{ref}}^n)}\bigg)^{1/2},\quad 
e_1 = \bigg(\sum_{n=k}^N \tau |\Bv(\cdot,t_n)-\Bv_h^n|^2_{\BH^1(\Omega_{\text{ref}}^n)}\bigg)^{1/2}.
\end{equation*}
Tables~\ref{tab:nonlinear k3} and ~\ref{tab:nonlinear k4} show that optimal convergence rates are obtained for the numerical solutions and the boundary-tracking algorithm as $h$ (or $\tau$) approaches $0$.

\begin{table}[t]
\setlength{\tabcolsep}{5mm} 
\caption{Convergence rates for section~\ref{nonlinear} ($k=3$).} \label{tab:nonlinear k3} 
\begin{tabular}{ lcccccccc}
	\toprule[2pt]
	$h=\tau$     &$e_0$       &rate   &$e_1$       &rate  &$e_{0,\Omega}$    &rate  &$e_{1,\Omega}$    &rate \\ \toprule[2pt] 
	1/32&1.11e-04 &- &4.06e-04 &-    &1.21e-03 &-     &1.40e-03  &-\\
	1/64&1.20e-05 &3.21 &4.42e-05 &3.19  &1.65e-04 &2.88 &1.71e-04 &3.03\\
	1/128&1.41e-06 &3.08 &5.50e-06 &3.00 &2.13e-05 &2.95 &2.11e-05 &3.01\\
	1/256&1.72e-07 &3.03 &6.88e-07 &3.00  &2.67e-06 &2.99&2.63e-06 &3.00\\ \toprule[2pt]
\end{tabular}
\end{table} 
\begin{table}[t]
\setlength{\tabcolsep}{5mm} 
\caption{Convergence rates for section~\ref{nonlinear} ($k=4$)} \label{tab:nonlinear k4} 
\begin{tabular}{ lcccccccc}
	\toprule[2pt]
	$h=\tau$     &$e_0$       &rate   &$e_1$       &rate  &$e_{0,\Omega}$    &rate  &$e_{1,\Omega}$    &rate \\ \toprule[2pt]
	1/32&4.72e-05 &- &2.34e-04 &-    &4.05e-04 &-     &4.48e-04  &-\\
	1/64&4.14e-06 &3.51 &1.47e-05 &3.99  &3.93e-05 &3.36 &3.84e-05 &3.54\\
	1/128&2.82e-07 &3.87 &4.39e-07 &5.06 &2.73e-06 &3.85 &2.78e-06 &3.78\\
	1/256&1.80e-08 &3.96 &2.76e-08 &3.99  &1.75e-07 &3.96&1.86e-07 &3.90\\ \toprule[2pt]
\end{tabular}
\end{table}

\subsection{Two-phase flow}
\label{ex:linear2}
\begin{figure}[htbp]
	\centering
	\includegraphics[width=1\textwidth]{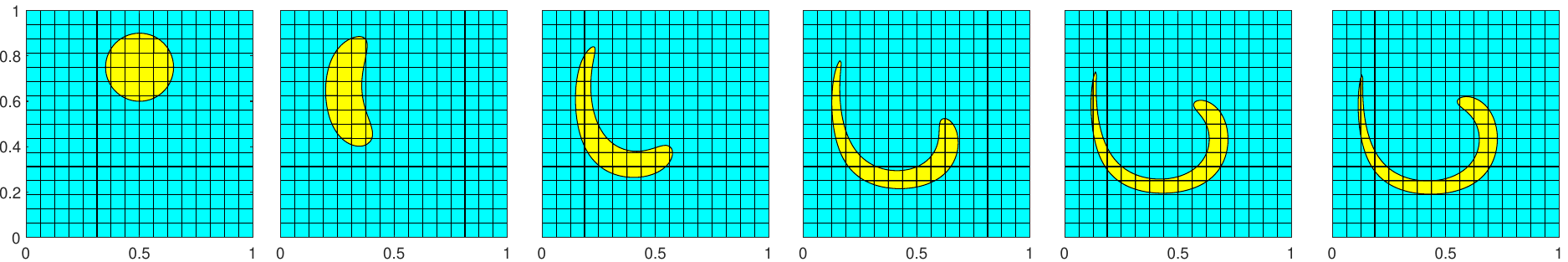}
	\caption{\small Two moving subdomains $\Omega_{\eta,1}^n$, $\Omega_{\eta,2}^n$ at $t_n =0$, $0.3$, $0.6$, $0.9$, $1.2$ and $1.5$ ($h=1/16$).}	\label{fig:Omega_interface}
\end{figure}
We use the cubic MARS algorithm in \cite{zha18} to track the interface and construct $\Omega_{\eta,1}^n$
and $\Omega_{\eta,2}^n = D\backslash \Omega_{\eta,1}^n$.
The initial domain $\Omega_{1,0}$ is a disk of radius $R = 0.15$ at $(0.5,0.75)$. The flow velocity is set by
\begin{equation*} 
\Bv = \cos(\pi t/3)\big[\sin^2 (\pi x_1 )\sin(2\pi x_2),-\sin^2(\pi x_2)\sin(2\pi x_1)\big]^{\top}
\end{equation*}
The domain is stretched into a snake-like region at $T=1.5$ (see Fig.~\ref{fig:Omega_interface}). 
The viscosities are set by $\nu_1 =1000$ and $\nu_2 = 1$.
The exact solution is set by
\begin{equation*}
u_1 = \sin(\pi(x_1+t))\sin(\pi(x_2+t)),\quad
u_2 = e^x\sin(\pi (x_2+t)).
\end{equation*}
The numerical error is measured by $e^N$, where
\begin{equation*}
(e^N)^2= \sum_{i=1}^2\bigg[\|u(T)-u_h^{N}\|_{L^2(\Omega_{\eta,i}^N)}^2
+\tau \sum_{n=k}^N \nu_i|u(t_n)-u_h^n|_{H^1(\Omega_{\eta,i}^n)}^2\bigg].
\end{equation*}
Convergence orders for $k=3$ and $4$ are shown in Table~\ref{tab:con interface}. For such a large 
deformation of the domain, the method still yields optimal convergence rates.
\begin{table}[htbp]
\setlength{\tabcolsep}{12.5mm} \center
\caption{ Convergence rates for section~\ref{ex:linear2}.} \label{tab:con interface}
\begin{tabular}{ lcccc}
	\toprule[2pt]
	$h=\tau$     &$e^N\; (k=3)$       &rate   &$e^N\; (k=4)$       &rate      \\ \toprule[2pt] 
	1/16 &6.05e-03 &-     &7.87e-3 &-             \\ 
	1/32 &8.78e-04 &2.78  &5.17e-5 &3.92 \\ 
	1/64 &1.15e-04 &2.92  &3.27e-6 &3.97 \\ 
	1/128&1.47e-05 &2.97  &2.07e-7 &3.97\\ \toprule[2pt]
\end{tabular}
\end{table} 

\subsection{Domain with topological change}
\label{sec:Problems with a topological change}
Finally we consider the domain having a topological change. Its boundary is given by the level set of the function
\begin{align*}
\phi(\Bx,t) = \min\{|\Bx-\Bc_1(t)|,|\Bx-\Bc_2(t)|\} - 0.15,
\end{align*}
where $\Bc_1(t)=[0.5,0.75-0.5t]^{\top}$ and $\Bc_2(t) =[0.5,0.25+0.5t]^{\top}$ are the centers of the two circles.
The finial time is set by $T=1$ which satisfies 
$\phi(\Bx,0) =\phi(\Bx,T)$. The evolution of the domain is shown in Fig.~\ref{fig:Omega_linear two circle}.
\begin{figure}[htbp]
\centering
\includegraphics[width=1\textwidth]{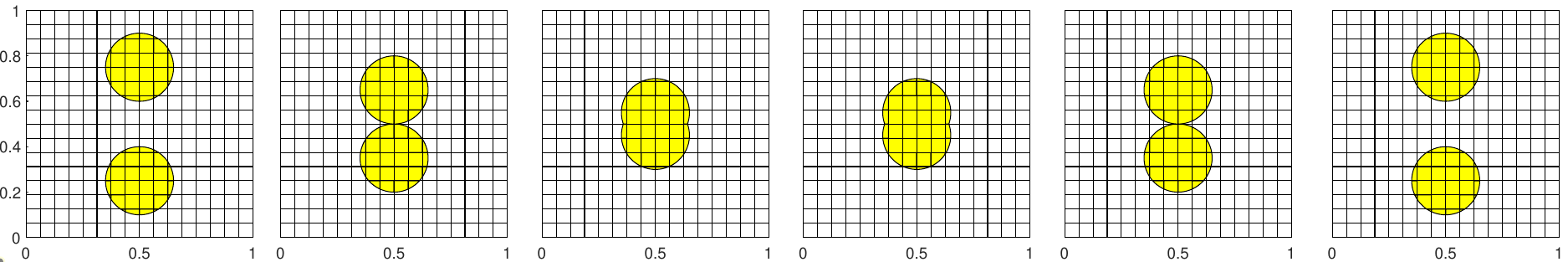}
\caption{\small The moving domain $\Omega_{\eta}^n$ at $t_n=0$, $0.2$, $0.4$, $0.6$, $0.8$, and $1$ ($h=1/16$).}	\label{fig:Omega_linear two circle}
\end{figure}

\begin{table}[htbp]
\center 
\setlength{\tabcolsep}{13mm} 
\caption{Convergence rates for section~\ref{sec:Problems with a topological change} ($k=3$).} \label{tab:topolical change} 
\begin{tabular}{ lcccc}
	\toprule[2pt]
	$h=\tau$     &$e_0$       &rate   &$e_1$       &rate      \\ \toprule[2pt] 
	1/16 &4.95e-05 &-     &6.37e-4 &-             \\ 
	1/32 &1.21e-05 &2.03  &1.52e-4 &2.02 \\ 
	1/64 &3.06e-06 &1.98  &4.36e-5 &1.85 \\ 
	1/128&7.80e-07 &1.98  &1.09e-5 &2.00 \\\toprule[2pt]
\end{tabular}
\end{table} 

The exact solution is $u = \sin(\pi (x_1+t))\sin(\pi (x_2+t))$.
In the implementation of the ALE map, we choose $\Bg_{\eta}^{n,n-1}$ by 
the closet point mapping. 
Again we compute the $L^2$- and $H^1$-errors of the solution on the approximate domain $\Omega_{\eta}^n$. 
Since the evolution of the domain is discontinuous, the assumption in \eqref{assum ALE mapping} does not hold anymore. 
Table ~\ref{tab:topolical change} shows that the convergence rate deteriorates into second order.

\section{Conclusions}
An arbitrary Lagrangian-Eulerian unfitted finite element (ALE-UFE) method has been presented for solving PDEs on time-varying domains. High-order convergence is obtained by adopting BDF schemes and ALE maps for time integration and unfitted finite element method for spatial discretization. The method is applied to various models, including a varying interface problem, a PDE-domain coupled problem, and a problem with topologically changing domain. The ALE-UFE method has the potential for solving a variety of moving-domain problems, such as fluid dynamics and FSI problems.
These will be our future work. 
\begin{appendix} 
\section{Useful estimates of the ALE mapping $\BX^{n,n-1}$}
\begin{lemma}
	Suppose $\Bg^{n,n-1}(\Bx) =\BX_F(t_{n-1};t_n,\Bx)$ and $\BX_F$ is given by \eqref{eq:X}. 
	The Jacobi matrices
	$\bbJ^{n,n-i}=\partial_{\Bx} \BX^{n,n-i}$, $1\leq i\leq k$, of ALE maps admit
	\begin{equation*}
		\|\bbJ^{n,n-i}-\bbI\|_{L^{\infty}(\Omega^n)} \lesssim \tau,\quad 
		\|\Bw_k^n(\Bx,t_n)\|\lesssim \infty.
	\end{equation*}
\end{lemma}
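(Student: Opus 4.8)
The plan is to reduce everything to the single-step map $\BX^{n,n-1}$ and then propagate the bounds through the composition \eqref{eq:multi step} and through the velocity formula \eqref{eq:wn}. First I would record that the backward boundary map is a near-identity perturbation. Writing $\BI$ for the identity map $\Bx\mapsto\Bx$ and integrating the flow equation \eqref{eq:X} from $t_n$ to $t_{n-1}$ gives $\Bg^{n,n-1}(\Bx)-\Bx=-\int_{t_{n-1}}^{t_n}\Bv(\BX_F(s;t_n,\Bx),s)\,\D s$, whose length-$\tau$ integration interval together with the boundedness of $\Bv\in\BC^r$ immediately yields $\|\Bg^{n,n-1}-\BI\|_{C^0(\Gamma^n)}\lesssim\tau$. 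For the derivatives I would differentiate in $\Bx$ and invoke the variational (linearized) equation $\frac{\D}{\D s}\partial_{\Bx}\BX_F=\nabla\Bv(\BX_F,s)\,\partial_{\Bx}\BX_F$ with initial value $\bbI$, which shows $\partial_{\Bx}\BX_F(t_{n-1};t_n,\cdot)=\bbI+O(\tau)$ and hence $\|\Bg^{n,n-1}-\BI\|_{C^1(\Gamma^n)}\lesssim\tau$.

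Next I would transfer this to the interior. Since the identity map is harmonic, the difference $\BX^{n,n-1}-\BI$ solves the homogeneous harmonic problem \eqref{eq:construction of X} with Dirichlet data $\Bg^{n,n-1}-\BI$ on $\Gamma^n$. Because $\Bv\in\BC^r$ with $r\ge k+1\ge 3$, the domains $\Omega^n=\Omega_{t_n}$ are images of $\Omega_0$ under a $C^r$-diffeomorphism over the fixed interval $[0,T]$, so their boundaries have $C^r$-geometry that is uniformly controlled in $n$; consequently the global Schauder estimate for the Dirichlet Laplacian holds with a constant independent of $n$, giving $\|\BX^{n,n-1}-\BI\|_{C^{1,\alpha}(\ol{\Omega^n})}\lesssim\|\Bg^{n,n-1}-\BI\|_{C^{1,\alpha}(\Gamma^n)}\lesssim\tau$. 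This is exactly the $i=1$ case of the Jacobian bound, and it also yields $\|\BX^{n,n-1}-\BI\|_{C^0(\ol{\Omega^n})}\lesssim\tau$. The same argument applied at every time level controls each one-step factor $\BX^{m,m-1}$. For $2\le i\le k$ I would then induct on $i$ using the chain rule applied to \eqref{eq:multi step}: writing $\bbJ^{n,n-i}(\Bx)=\bbJ^{n-i+1,n-i}(\BX^{n,n-i+1}(\Bx))\,\bbJ^{n,n-i+1}(\Bx)$, each factor is $\bbI+O(\tau)$ in $L^\infty$, and since the number $i\le k$ of factors is fixed, the product equals $\bbI+O(\tau)$; the companion bound $\|\BX^{n,n-i}-\BI\|_{C^0}\lesssim\tau$ follows by the same telescoping.

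Finally I would bound the artificial velocity. Evaluating \eqref{eq:wn} at $t=t_n$ gives $\Bw_k^n(\Bx,t_n)=\sum_{i=0}^k(l_n^i)'(t_n)\,\BX^{n,n-i}(\Bx)$, where the Lagrange derivatives scale like $(l_n^i)'(t_n)=O(\tau^{-1})$ on the $\tau$-spaced stencil, so a naive estimate would only give $O(\tau^{-1})$. The key is the cancellation coming from $\sum_{i=0}^k l_n^i\equiv 1$, hence $\sum_{i=0}^k(l_n^i)'(t_n)=0$, which lets me subtract the identity and write $\Bw_k^n(\Bx,t_n)=\sum_{i=0}^k(l_n^i)'(t_n)\big(\BX^{n,n-i}(\Bx)-\Bx\big)$. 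Combining $(l_n^i)'(t_n)=O(\tau^{-1})$ with the $C^0$-bound $\|\BX^{n,n-i}-\BI\|_{C^0}\lesssim\tau$ from the previous step collapses each summand to $O(1)$, and summing the fixed number $k+1$ of terms yields the uniform bound $\|\Bw_k^n(\cdot,t_n)\|_{\Linfv[\Omega^n]}\lesssim 1$. I expect the only genuinely delicate point to be the uniformity in $n$ of the Schauder constant in the second step; this is where the standing $\BC^r$-smoothness of $\Bv$ and the resulting uniform control of the moving boundaries $\Gamma^n$ are essential, and it is what lets all the $O(\tau)$ bounds be taken with a single $n$-independent hidden constant.
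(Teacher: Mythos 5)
Your proposal is correct, and it is worth comparing the two arguments. For the Jacobian bound your skeleton is the same as the paper's: $\BX^{n,n-1}-\BI$ is harmonic in $\Omega^n$ with $O(\tau)$ Dirichlet data (the paper writes this difference as $\Bd_\BX=\BX^{n,n-1}-\BX^{n,n}$), elliptic theory upgrades the boundary bound to an interior $W^{1,\infty}$ bound, and the case $2\le i\le k$ follows from the chain rule applied to \eqref{eq:multi step} with a bounded number of near-identity factors. The difference is the machinery: the paper estimates $\|\Bg^{n,n-1}-\BI\|_{\BH^{r}(\Gamma^n)}\lesssim\tau$, invokes $H^{r+1/2}(\Omega^n)$ elliptic regularity, and embeds $H^{r+1/2}\hookrightarrow W^{1,\infty}$, while you derive smallness of the boundary data from the variational equation of the flow and invoke global Schauder estimates; both work, but note that your step 1 proves only a $C^1$ bound while your step 2 consumes a $C^{1,\alpha}$ bound, so you need one more differentiation of the variational equation (available since $r\ge k+1\ge 3$) to supply what Schauder requires. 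Where you genuinely diverge from the paper is the velocity bound. The paper identifies $\Bw_k^n(\cdot,t_n)|_{\Gamma^n}$ with the BDF quotient $\tau^{-1}\sum_{i}\lambda_i^k\Bg^{n,n-i}$, asserts in \eqref{wn} that $\Bw_k^n(\cdot,t_n)$ is harmonic in $\Omega^n$, and concludes by elliptic regularity plus Sobolev embedding; you instead exploit $\sum_{i=0}^k(l_n^i)'(t_n)=0$ to write $\Bw_k^n(\cdot,t_n)=\sum_{i}(l_n^i)'(t_n)\big(\BX^{n,n-i}-\BI\big)$ and balance the $O(\tau^{-1})$ coefficients against the $O(\tau)$ sup-norm bounds, which needs only the maximum principle. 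Your route buys something real here: the harmonicity asserted in \eqref{wn} is delicate for $k\ge 2$, since $\BX^{n,n-i}$ with $i\ge 2$ is a composition of componentwise-harmonic maps and such compositions are not harmonic in general, whereas your cancellation argument never requires $\Bw_k^n$ to solve any PDE. Both proofs share the remaining subtle point you flagged, namely that the elliptic or Schauder constants must be uniform in $n$, which follows from the uniform $C^r$ regularity of the flow over $[0,T]$.
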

\begin{proof}
	For ease of notation, we write $\Bd_\BX =\BX^{n,n-1}-\BX^{n,n}$.
	Then \eqref{eq:construction of X} implies
	\begin{align*}
		-\Delta \Bd_\BX = \textbf{0} \quad \text{in}\; \Omega^n,\qquad
		\Bd_\BX = \Bg^{n,n-1}-\Bg^{n,n}  \quad \text{on}\; \Gamma^n.
	\end{align*}
	Since $\Bg^{n,n}=\BI$ and both $\Gamma^n$ and $\Bv$ are $C^r$-smooth, we have
	$\|\Bg^{n,n-1}-\Bg^{n,n}\|_{\BH^{r}(\Gamma^n)}\lesssim \tau$. 
	The regularity result of elliptic equations yields
	\begin{equation}\label{eq:estimate of Ex}
		\N{\Bd_\BX}_{\BH^{r+1/2}(\Omega^n)}\leq C \N{\Bg^{n,n-1}-\Bg^{n,n}}_{\BH^{r}(\Gamma^n)}\lesssim \tau.
	\end{equation}
	Using $\BX^{n,n}=\BI$, Sobolev's inequality, and \eqref{eq:estimate of Ex}, we find that
	\begin{equation*}
		\|{\bbJ^{n,n-1}-\bbI}\|_{\BL^\infty(\Omega^n)}=
		\|{\Bd_\BX}\|_{\BW^{1,\infty}(\Omega^n)}\lesssim
		\|{\Bd_\BX}\|_{\BH^{r+1/2}(\Omega^n)}\lesssim \tau.
	\end{equation*}
	By \eqref{eq:multi step} and the chain rule, we obtain
	$\|{\bbJ^{n,n-i}-\bbI}\|_{\BL^\infty(\Omega^n)}\lesssim \tau$ for $2\leq i\leq k$. 
	
	From the definition of $\Bw^{n}_k$ in \eqref{eq:wn},  a simple calculation gives
	\begin{equation*}
		\Bw_k^n(\Bx,t_n) |_{\Gamma^n}= \sum_{i=0}^k (l_n^i)'(t_n)\BX^{n,n-i}|_{\Gamma^n}
		= \frac{1}{\tau}\sum_{i=0}^k \lambda_i^k \BX^{n,n-i}|_{\Gamma^n}=
		\frac{1}{\tau}\sum_{i=0}^k \lambda_i^k \Bg^{n,n-i},
	\end{equation*}
	where $\lambda_i^k$ are the coefficients of BDF-$k$.
	Clearly $\Bw_k^n(\Bx,t_n) |_{\Gamma^n}$ 
	is an approximation to $\partial_t \BX_F(t;t_n,\cdot)|_{t=t_n}$. 
	It is easy to see from \eqref{eq:construction of X} that
	\begin{equation}\label{wn}
		-\Delta \Bw^{n}_k(\Bx,t_n) = 0 \quad \text{in}\;\; \Omega^n,\qquad
		\Bw^{n}_k(\Bx,t_n) =\tau^{-1}\sum_{i=0}^k \lambda_i^k \Bg^{n,n-i}   
		\quad\text{on} \;\; \Gamma^{n}.
	\end{equation}
	By Sobolev's inequalities and  regularity theories of elliptic equations, we obtain
	\begin{align*}
		\|\Bw^{n}_k(t_n)\|_{\BL^\infty(\Omega^n)}
		\lesssim \frac{1}{\tau}\Big\|\sum_{i=0}^k \lambda_i^k \Bg^{n,n-i} \Big\|_{\BH^{r}(\Gamma^n)} \lesssim 
		\|\partial_t \BX_F(t;t_n,\cdot)|_{t=t_n}\|_{\BH^{r}(\Gamma^n)}.  
	\end{align*}
\end{proof}

Since the computations and numerical analysis are performed on the approximate domain $\Omega_{\eta}^n$, which is different from the exact one $\Omega^n$ in general, we have to extend $\BX^{n,n-1}$ from $\Omega^n$ to the fictitious domain $\tilde{\Omega}^n$.

\begin{lemma}
	There exits an extension of $\BX^{n,n-1}\in \BH^{r+1/2}(\Omega^n)$,
	denoted by $\tilde\BX^{n,n-1}\in \BH^{r+1/2}(\tilde\Omega^n)$, such that
	\begin{equation}
		\tilde\BX^{n,n-1}|_{\Omega^n} =\BX^{n,n-1},\qquad
		\|\tilde \BX^{n,n-1}\|_{\BH^{r+1/2}(\tilde{\Omega}^n)} \leq \infty.
	\end{equation}
	Moreover, the Jacobi matrix $\tilde{\bbJ}^{n,n-1} = \partial_{\Bx}\tilde{\BX}^{n,n-1}$ satisfies
	\begin{equation*}
		\tilde{\bbJ}^{n,n-1}|_{\Omega^n} = \bbJ^{n,n-1},\qquad
		\|\tilde\bbJ^{n,n-1}-\bbI\|_{\bbL^{\infty}(\tilde{\Omega}^n)} \lesssim \tau.
	\end{equation*}
\end{lemma}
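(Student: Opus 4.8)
The plan is to extend not the map $\BX^{n,n-1}$ itself but its deviation from the identity, which the preceding lemma already controls. Writing $\Bd_\BX = \BX^{n,n-1}-\BI$, estimate \eqref{eq:estimate of Ex} gives $\N{\Bd_\BX}_{\BH^{r+1/2}(\Omega^n)}\lesssim \tau$. Since the identity map $\BI$ is globally defined on $\bbR^2$, it suffices to extend $\Bd_\BX$ and then set $\tilde\BX^{n,n-1} := \BI + \tilde\Bd_\BX$, where $\tilde\Bd_\BX$ denotes an extension of $\Bd_\BX$ to $\tilde\Omega^n$. This reduces the whole statement to one extension estimate, and it automatically ties the perturbation size to $\tau$.

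First I would invoke a Sobolev (Stein-type) extension operator $\scE:\BH^{r+1/2}(\Omega^n)\to\BH^{r+1/2}(\bbR^2)$, which exists since $\Gamma^n=\partial\Omega^n$ is $C^r$-smooth, hence in particular Lipschitz, and which is bounded: $\N{\scE\Bphi}_{\BH^{r+1/2}(\bbR^2)}\lesssim \N{\Bphi}_{\BH^{r+1/2}(\Omega^n)}$. Setting $\tilde\Bd_\BX := (\scE\Bd_\BX)|_{\tilde\Omega^n}$ and using $\Omega^n\subset\tilde\Omega^n$ (valid for $\tau$ small by the $\delta$-neighborhood construction), the restriction properties are immediate: $\tilde\BX^{n,n-1}|_{\Omega^n}=\BI+\Bd_\BX=\BX^{n,n-1}$ and, differentiating, $\tilde\bbJ^{n,n-1}|_{\Omega^n}=\bbI+\partial_\Bx\Bd_\BX=\bbJ^{n,n-1}$. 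The finiteness of $\N{\tilde\BX^{n,n-1}}_{\BH^{r+1/2}(\tilde\Omega^n)}$ then follows from $\N{\tilde\BX^{n,n-1}}_{\BH^{r+1/2}(\tilde\Omega^n)}\le \N{\BI}_{\BH^{r+1/2}(\tilde\Omega^n)}+\N{\scE\Bd_\BX}_{\BH^{r+1/2}(\bbR^2)}<\infty$, using boundedness of $\scE$ together with the bound on $\Bd_\BX$.

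For the Jacobian estimate I would use the Sobolev embedding $\BH^{r+1/2}(\tilde\Omega^n)\hookrightarrow \BW^{1,\infty}(\tilde\Omega^n)$, which holds in two dimensions because $r+1/2\ge k+3/2>2$. Since $\tilde\bbJ^{n,n-1}-\bbI=\partial_\Bx\tilde\Bd_\BX$, this yields $\N{\tilde\bbJ^{n,n-1}-\bbI}_{\bbL^{\infty}(\tilde\Omega^n)}\le \N{\tilde\Bd_\BX}_{\BW^{1,\infty}(\tilde\Omega^n)}\lesssim \N{\scE\Bd_\BX}_{\BH^{r+1/2}(\bbR^2)}\lesssim \tau$, mirroring the interior estimate of the previous lemma.

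The main obstacle is keeping every implied constant independent of $n$, $\tau$, and the mesh, as the $\lesssim$ convention requires; the delicate quantity is the operator norm of $\scE$, which a priori depends on the geometry of $\Omega^n$. I would resolve this by observing that, since $\Bv\in\BC^r(\bbR^2\times[0,T])$ and $\BX_F$ is a diffeomorphism, the family $\{\Gamma^n\}_{0\le n\le N}$ is \emph{uniformly} $C^r$-smooth over the compact interval $[0,T]$. One can therefore build $\scE$ from a finite atlas of boundary charts whose $C^r$-norms are uniformly bounded in $n$, giving a uniform extension-operator norm and hence estimates that hold uniformly in $n$.
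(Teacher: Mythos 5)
Your proposal is correct and takes essentially the same route as the paper: both extend the deviation $\Bd_\BX=\BX^{n,n-1}-\BI$, whose $\BH^{r+1/2}(\Omega^n)$-norm is $O(\tau)$ by \eqref{eq:estimate of Ex}, via a Sobolev extension operator, define $\tilde\BX^{n,n-1}=\BI+\widetilde{\Bd_\BX}$, and deduce the Jacobian bound from the two-dimensional embedding of $\BH^{r+1/2}$ into $\BW^{1,\infty}$. Incidentally, your write-up is cleaner on two points: the paper defines $\tilde\BX^{n,n-1}:=\BI+\tau\widetilde{\Bd_\BX}$, where the extra factor $\tau$ is evidently a slip (it would destroy the restriction identity $\tilde\BX^{n,n-1}|_{\Omega^n}=\BX^{n,n-1}$), and your closing paragraph on the $n$-uniformity of the extension-operator norm makes explicit a dependence that the paper silently absorbs into its $\lesssim$ notation.
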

\begin{proof}
	By the Sobolev extension theorem and \eqref{eq:estimate of Ex}, there exits an extension of $\Bd_{\BX}$, denoted by
	$\widetilde{\Bd_{\BX}}\in \BH^{r+1/2}(\tilde{\Omega}^n)$, such that
	\begin{equation}\label{eq:extension X*}
		\widetilde{\Bd_{\BX}}|_{\Omega^n} = \Bd_\BX,\qquad 
		\|\widetilde{\Bd_\BX}\|_{\BH^{r+1/2}(\tilde{\Omega}^n)}
		\lesssim \|\Bd_\BX\|_{\BH^{r+1/2}(\Omega^n)} \lesssim \tau.
	\end{equation}
	Then the extension of $\BX^{n,n-1}$ is defined by
	$\tilde{{\BX}}^{n,n-1} := \BI +\tau \widetilde{\Bd_\BX}$. It follows that
	\begin{equation}\label{eq:extension X}
		\tilde{\BX}^{n,n-1}|_{\Omega^n} = {\BX}^{n,n-1},\qquad 
		\|\tilde{\BX}^{n,n-1}\|_{\BH^{r+\frac 12}(\tilde{\Omega}^n)}
		\lesssim \text{area}(D) +\tau.
	\end{equation}
	Since $r\geq k+1\geq 3$,
	from Sobolev's inequality and \eqref{eq:extension X*}, for $\mu=0$, $1$, we get
	\begin{equation} \label{eq:linfy of X-x}
		\|\tilde{\BX}^{n,n-1}-\BI\|_{\BW^{\mu,\infty}(\tilde{\Omega}^n)}\lesssim\tau
		\|\widetilde{{\Bd}_{\BX}}\|_{\BH^{2+\mu}(\tilde{\Omega}^n)}\lesssim \tau^2. 
	\end{equation}
	This implies $\big\|\tilde \bbJ^{n,n-1}-\bbI\big\|_{\bbL^\infty(\tilde \Omega^n)}\lesssim \tau$ by noting $\tilde \bbJ^{n,n-1}=\partial_{\Bx} \tilde \BX^{n,n-1}$.
\end{proof}	
\section{Useful estimates of the discrete ALE map $\BX_h^{n,n-i}$}
\begin{lemma}\label{lem:Xh}
	Let $\bbJ_h^{n,n-1}:=\partial_{\Bx} \BX_h^{n,n-1}$ be the Jacobi matrix of $\BX_h^{n,n-1}$. Upon hidden constants independent of $\tau,h,n$, and $\eta$, there hold
	\begin{equation}\label{Xh}
		\|{ \bbJ_h^{n,n-1}-\bbI }\|_{\bbL^\infty(\tilde \Omega^n)}\lesssim\tau,\qquad
		\|{\BX_h^{n,n-1}-\tilde\BX^{n,n-1}}\|_{\BL^\infty(\tilde \Omega^n)}
		\lesssim\tau^{k-1/2}.
	\end{equation}
\end{lemma}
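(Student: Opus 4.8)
The plan is to establish an energy-norm error estimate for the unfitted Nitsche problem \eqref{eq:discrte Xh} and then pass to the two $\BL^\infty$ bounds by inverse inequalities. Throughout I compare $\BX_h^{n,n-1}$ with the extension $\tilde\BX^{n,n-1}$ furnished by the preceding lemma, which is bounded in $\BH^{r+1/2}(\tilde\Omega^n)$ with $r\ge k+1$, agrees with $\BX^{n,n-1}$ on $\Omega^n$, and satisfies $\|\tilde\bbJ^{n,n-1}-\bbI\|_{\bbL^\infty(\tilde\Omega^n)}\lesssim\tau$. Testing \eqref{eq:discrte Xh} with $\Bv_h\in\BV(k,\Ct_h^n)$ and integrating by parts componentwise over $\Omega_\eta^n$ — whereby the boundary flux from $(\nabla\tilde\BX^{n,n-1},\nabla\Bv_h)_{\Omega_\eta^n}$ cancels against $\mathscr{S}_h^n$ — I obtain the error identity
\begin{equation*}
	\bm{\mathscr{A}}_h^n\big(\BX_h^{n,n-1}-\tilde\BX^{n,n-1},\,\Bv_h\big)
	= \big(\Delta\tilde\BX^{n,n-1},\Bv_h\big)_{\Omega_\eta^n}
	+\Cf_{\Gamma_\eta^n}\big(\Bg_\eta^{n,n-1}-\tilde\BX^{n,n-1},\Bv_h\big)
	-\mathscr{J}_1^n\big(\tilde\BX^{n,n-1},\Bv_h\big),
\end{equation*}
with the volume, boundary and penalty forms read componentwise.

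The three right-hand terms are the consistency errors. The penalty term vanishes: since $\tilde\BX^{n,n-1}\in\BH^{r+1/2}(\tilde\Omega^n)$ with $r\ge k+1$, its normal derivatives up to order $k$ are continuous across interior edges, so every jump in $\mathscr{J}_1^n(\tilde\BX^{n,n-1},\cdot)$ is zero. The volume residual is supported on $\Omega_\eta^n\setminus\Omega^n$ because $\tilde\BX^{n,n-1}$ is harmonic on $\Omega^n$; by \eqref{eq:chi_eta} this strip has area $O(\tau^{k+1})$, and combined with a trace-type bound $\|\Bv_h\|_{\BL^2(\Omega_\eta^n\setminus\Omega^n)}\lesssim\tau^{(k+2)/2}\TN{\Bv_h}_{\Ct_h^n}$ for finite element functions on a thin tubular neighborhood it is controlled by $\tau^{k+1/2}\TN{\Bv_h}_{\Ct_h^n}$ (in fact of higher order). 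The boundary mismatch $\Bg_\eta^{n,n-1}-\tilde\BX^{n,n-1}$ on $\Gamma_\eta^n$ combines the RK-$(k+1)$ truncation error $O(\tau^{k+2})$ with the $O(\tau^{k+1})$ displacement between $\Gamma_\eta^n$ and $\Gamma^n$, so $\|\Bg_\eta^{n,n-1}-\tilde\BX^{n,n-1}\|_{\BL^2(\Gamma_\eta^n)}\lesssim\tau^{k+1}$; weighting by $\gamma_0h^{-1}$ and by $\partial_\Bn$, using the trace bounds built into $\TN{\cdot}_{\Ct_h^n}$ and $h=O(\tau)$, yields the dominant contribution $\lesssim\tau^{k+1/2}\TN{\Bv_h}_{\Ct_h^n}$. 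With a quasi-interpolant $\Pi_h\tilde\BX^{n,n-1}\in\BV(k,\Ct_h^n)$, the coercivity and continuity of $\mathscr{A}_h^n$ (stated before Theorem~\ref{lem:Ah}, cf.~\cite{ma21}) and a C\'ea argument — the interpolation contribution being $O(h^k)=O(\tau^k)$ and subdominant — then give $\TN{\BX_h^{n,n-1}-\tilde\BX^{n,n-1}}_{\Ct_h^n}\lesssim\tau^{k+1/2}$.

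It remains to pass to $\BL^\infty$. A Poincar\'e inequality (the trace on $\Gamma_\eta^n$ being controlled by $\mathscr{J}_0^n$) together with \eqref{norm-eq0} yields $\|\BX_h^{n,n-1}-\Pi_h\tilde\BX^{n,n-1}\|_{\BL^2(\tilde\Omega^n)}\lesssim\TN{\BX_h^{n,n-1}-\Pi_h\tilde\BX^{n,n-1}}_{\Ct_h^n}\lesssim\tau^{k+1/2}$; the two-dimensional inverse inequality $\|w_h\|_{\BL^\infty}\lesssim h^{-1}\|w_h\|_{\BL^2}$, the $\BL^\infty$-interpolation error $O(h^{k+1})$, and $h=O(\tau)$ then give $\|\BX_h^{n,n-1}-\tilde\BX^{n,n-1}\|_{\BL^\infty(\tilde\Omega^n)}\lesssim h^{-1}\tau^{k+1/2}\sim\tau^{k-1/2}$, which is the second estimate. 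For the Jacobian I split $\bbJ_h^{n,n-1}-\bbI=\partial_\Bx(\BX_h^{n,n-1}-\Pi_h\tilde\BX^{n,n-1})+\partial_\Bx(\Pi_h\tilde\BX^{n,n-1}-\tilde\BX^{n,n-1})+(\tilde\bbJ^{n,n-1}-\bbI)$: the last term is $\lesssim\tau$ by the preceding lemma, the middle is an $O(\tau^k)$ interpolation error, and the first is bounded via the inverse inequality by $h^{-1}\TN{\BX_h^{n,n-1}-\Pi_h\tilde\BX^{n,n-1}}_{\Ct_h^n}\lesssim\tau^{k-1/2}\lesssim\tau$ for $k\ge2$, which proves the first estimate. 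The main obstacle is the sharp treatment of the geometric consistency, namely the strip residual and the boundary-data mismatch, whose $\gamma_0h^{-1}$-weighting is responsible for the characteristic half-power loss $\tau^{k+1/2}$ in the energy norm; the remaining steps are a standard C\'ea estimate followed by inverse inequalities.
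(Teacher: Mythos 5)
Your overall architecture is the same as the paper's: compare $\BX_h^{n,n-1}$ with the harmonic extension $\tilde\BX^{n,n-1}$, derive a Galerkin-orthogonality identity whose consistency terms are the strip residual and the boundary-data mismatch (both of size $\tau^{k+1/2}$ against $\TN{\Bv_h}_{\Ct_h^n}$, with $\mathscr{J}_1^n(\tilde\BX^{n,n-1},\cdot)=0$ by regularity), and then pass to $\BL^\infty$ by inverse estimates --- this is exactly the paper's route through \eqref{eq:extend X}, \eqref{eq:Ahe}, \eqref{eq:errX1}, \eqref{eq:errX2}. The genuine gap is at your C\'ea step. You assert that the interpolation contribution ``$O(h^k)=O(\tau^k)$'' is subdominant and conclude $\TN{\BX_h^{n,n-1}-\tilde\BX^{n,n-1}}_{\Ct_h^n}\lesssim\tau^{k+1/2}$. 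This is backwards: for $\tau\le 1$ and $h\sim\tau$ one has $h^k\sim\tau^k\gg\tau^{k+1/2}$, so the interpolation term is the \emph{dominant} one. What your argument actually yields --- and what the paper records in \eqref{eq:err-xih} --- is $\TN{\xibf_h}_{\Ct_h^n}\lesssim h^k+\tau^{k+1/2}$ for the discrete part $\xibf_h=\BX_h^{n,n-1}-\Ci_h^n\tilde\BX^{n,n-1}$, and the same for the full error.

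This half power is fatal for the second estimate under your choice of inverse inequality. Inserting the corrected energy bound into $\|w_h\|_{\BL^\infty}\lesssim h^{-1}\|w_h\|_{\BL^2}$ gives only $h^{-1}\big(h^k+\tau^{k+1/2}\big)\sim\tau^{k-1}+\tau^{k-1/2}$, i.e.\ $O(\tau^{k-1})$, which misses the claimed $\tau^{k-1/2}$; only your Jacobian estimate survives, since $\tau^{k-1}\lesssim\tau$ for $k\ge 2$. The paper closes exactly this gap by using the sharper two-dimensional Sobolev--inverse bound $\|v_h\|_{L^\infty(\tilde\Omega^n)}\lesssim\|v_h\|_{W^{1,4}(\tilde\Omega^n)}\lesssim h^{-1/2}\|v_h\|_{H^1(\tilde\Omega^n)}$, its \eqref{eq:infty to H1}, applied to $\xibf_h$, together with a direct $O(h^k)$ interpolation bound in $\BL^\infty$ for $\etabf$: this gives $h^{-1/2}\big(h^k+\tau^{k+1/2}\big)+h^k\lesssim h^{k-1/2}+h^{-1/2}\tau^{k+1/2}\lesssim\tau^{k-1/2}$ when $h=O(\tau)$. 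Alternatively, your route could be repaired by sharpening the interpolation error itself: $\tilde\BX^{n,n-1}-\BI$ is $O(\tau)$ in $\BH^{r+1/2}(\tilde\Omega^n)$ and the degree-$k$ interpolant reproduces the affine map $\BI$ exactly, so the interpolation error actually carries an extra factor $\tau$ and is $O(h^k\tau)$, which genuinely is subdominant. But you neither state nor use this fact; as written, the key claim $\TN{\xibf_h}_{\Ct_h^n}\lesssim\tau^{k+1/2}$ is unsupported and false, and your proof of $\|\BX_h^{n,n-1}-\tilde\BX^{n,n-1}\|_{\BL^\infty(\tilde\Omega^n)}\lesssim\tau^{k-1/2}$ does not close.
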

\begin{proof}
	Since  $\Omega_\eta^n \neq \Omega^n$, 
	multiplying both sides of \eqref{eq:construction of X} by $\Bv_h\in\BV(k,\Ct_h^n)$
	and using $\tilde \BX^{n,n-1}\in \BH^{r+1/2}(\tilde \Omega^n)\subset  H^{k+1}(\tilde \Omega^n)$,
	we have
	\begin{equation}\label{eq:extend X}
		\Ca_h^n(\tilde \BX^{n,n-1},\Bv_h) = -(\Delta \tilde{\BX}^{n,n-1},\Bv_h)_{\Omega_\eta^n \backslash \Omega^n}
		+\Cf_{\Gamma_\eta^n}(\tilde \BX^{n,n-1},\Bv_h).
	\end{equation}
	For convenience, let $\Ci_h^n$ be the Lagrange interpolation operator and define
	\begin{equation*}
		\Be = \BX_h^{n,n-1}-\tilde\BX^{n,n-1},\;
		\xibf_h = \BX_h^{n,n-1}-\Ci_h^n \tilde \BX^{n,n-1},\;
		\etabf = \Ci_h^n \tilde \BX^{n,n-1}-\tilde \BX^{n,n-1}.
	\end{equation*}
	It is easy to see that
	$\Be = \xibf_h +\etabf$.
	Subtracting \eqref{eq:extend X} from \eqref{eq:discrte Xh}, one has
	\begin{align}\label{eq:Ahe}
		\Ca_h^n(\Be,\Bv_h)=
		(\Delta \tilde{\BX}^{n,n-1},\Bv_h)_{\Omega_\eta^n\backslash\Omega^n}
		+\Cf_{\Gamma_\eta^n}(\Bg_{\eta}^{n,n-1} -\tilde \BX^{n,n-1},\Bv_h).
	\end{align}
	
	For any $\Bv_h\in \BV(k,\Ct_h^n)$, Poincar$\acute{\text{e}}$ inequality and \eqref{norm-eq0} show 
	\begin{equation}\label{eq:L2 poincare}
		\NLtwov[\tilde \Omega^n]{\Bv_h}\lesssim \SNHonev[\tilde \Omega^n]{\Bv_h}+\mathscr{J}_0^n(\Bv_h,\Bv_h)\lesssim \TN{\Bv_h}_{\Ct_h^n}. 
	\end{equation}
	Since $\text{area}(\Omega_\eta^n\backslash\Omega^n) =\tau^{k+1}$,
	by \cite[Lemma~A4]{ma21} and \eqref{eq:L2 poincare}, we have
	\begin{align}
		(\Delta \tilde{\BX}^{n,n-1},\Bv_h)_{\Omega_\eta^n \backslash \Omega^n}
		&\lesssim \tau^{k+1} \|{\tilde \BX^{n,n-1}}\|_{\BH^{3}(\tilde{\Omega}^n)}
		\||{\Bv_h}|\|_{\Ct_h^n}.
		\label{eq:errX1}
	\end{align}
	Since the approximate boundary satisfies $\text{dist}(\Gamma^n,\Gamma_{\eta}^n)\lesssim \tau^{k+1}$, and 
	$\Bg_{\eta}^{n,n-1}$ is the $(k+1)^{\text{th}}$-order approximation to $\Bg^{n,n-1}$, there holds
	\begin{equation}\label{eq:eq:trace Xtau-X}
		\|\Bg_{\eta}^{n,n-1}-\tilde\BX^{n,n-1}\|_{L^2(\Gamma_\eta^n)}
		\lesssim\;\tau^{k+1}.
	\end{equation}
	From the trace inequality, inverse estimate, and \eqref{eq:eq:trace Xtau-X}, we have
	\begin{align}
		\Cf_{\Gamma_\eta^n}(\BX_\tau^{n,n-1} -\tilde \BX^{n,n-1},\Bv_h)
		\lesssim \tau^{k+\frac 12}\||{\Bv_h}|\|_{\Ct_h^n}.
		\label{eq:errX2}
	\end{align}
	Insert \eqref{eq:errX1} and \eqref{eq:errX2} into \eqref{eq:Ahe}. 
	By  Lemma~\ref{lem:Ah} and interpolation error estimates, we obtain
	\begin{align}
		\||{\xibf_h}|\|^2_{\Ct_h^n} \lesssim \mathscr{A}_h^n(\xibf_h,\xibf_h)=\Ca_h^n(\Be,\xibf_h)
		-\Ca_h^n(\etabf,\xibf_h)
		\lesssim (\tau^{k+\frac 12}+h^k)\||{\xibf_h}|\|_{\Ct_h^n}.\label{eq:err-xih}
	\end{align}
	Thus together with \eqref{norm-eq0} and \eqref{eq:err-xih}, one has
	\begin{equation}\label{eq:err-xih2}
		|{\xibf_h}|_{\BH^1(\tilde{\Omega}^n)}\lesssim \||{\xibf_h}|\|_{\Ct_h^n}\lesssim h^k+\tau^{k+\frac 12}.
	\end{equation}
	These yield $|{\Be}|_{\BH^1(\tilde{\Omega}^n)} \lesssim h^k + \tau^{k+\frac 12}$.
	Since $\|{\tilde \BX^{n,n-1}}\|_{\BW^{k,\infty}(\tilde \Omega^n)}\lesssim
	\|{\tilde \BX^{n,n-1}}\|_{\BH^{r+\frac 12}(\tilde \Omega^n)}$,
	inverse estimates imply
	\begin{align}
		\|{\Be}\|_{\BW^{1,\infty}(\tilde \Omega^n)}
		\lesssim h^{-1}\|{\xibf_h}\|_{\BH^{1}(\tilde{\Omega}^n)}+ h^{k-1} \|{\tilde \BX^{n,n-1}}\|_{\BH^{r+\frac 12}(\tilde \Omega^n)}
		\lesssim h^{k-1} +\tau^{k-\frac 12}.\label{eq:err e W1infty}	
	\end{align} 
	Let $\bbJ_h^{n,n-1}=\partial_{\Bx} \tilde \BX_h^{n,n-1}$.
	From  \eqref{eq:err e W1infty} and \eqref{eq:linfy of X-x}, we have
	\begin{align}
		\|{\bbJ_h^{n,n-1}-\bbI}\|_{\BL^\infty(\tilde{\Omega}^n)}
		&\leq \|{\bbJ_h^{n,n-1}-\tilde\bbJ^{n,n-1}}\|_{\BL^\infty(\tilde{\Omega}^n)}
		+\|{\tilde \bbJ^{n,n-1}-\bbI}\|_{\BL^\infty(\tilde{\Omega}^n)}
		\leq \|{\Be}\|_{\BW^{1,\infty}(\tilde{\Omega}^n)} 
		+\tau\|{\BX^{n,n-1}}\|_{\BH^{r+\frac 12}(\Omega^n)}\lesssim \tau.\notag
	\end{align}
	
	From Sobolev's inequality and inverse estimates, for any $v_h\in H^{1}(\tilde \Omega^n)$, we get
	\begin{equation}\label{eq:infty to H1}
		\NLinf[\tilde \Omega^n]{v_h}\lesssim \N{v_h}_{W^{1,4}(\tilde \Omega^n)}
		\lesssim h^{-\frac 12}\NHone[\tilde \Omega^n]{v_h}.
	\end{equation}
	Moreover, it yields from \eqref{eq:L2 poincare}, \eqref{eq:err-xih2}, \eqref{eq:infty to H1} that
	\begin{align*}
		\N{\Be}_{\BL^\infty(\tilde \Omega^n)} &\leq 
		\N{\xibf_h}_{\BL^{\infty}(\tilde \Omega^n)}
		+\N{\etabf}_{\BL^{\infty}(\tilde \Omega^n)}
		\lesssim h^{-\frac 12}\NHone[\tilde \Omega^n]{\xibf_h}
		+h^{k} \N{\BX^{n,n-1}}_{\BW^{k,\infty}(\tilde \Omega^n)}\\
		&\lesssim h^{-\frac 12}\TN{\xibf_h}_{\Ct_h^n}
		+h^{k}\lesssim h^{k-\frac 12}+h^{-\frac 12}\tau^{k+\frac 12}.
	\end{align*}
	The proof is finished by assuming $h=O(\tau)$.
\end{proof}    

\begin{corollary}\label{corol:bound of Jh}
	Let $\Bw^{n}_{k,h}$ be defined in \eqref{eq: ALE Ah} and write $\bbJ_h^{n,n-i} := \partial_{\Bx} \BX^{n,n-i}_h$ and 
	$\bbJ_h^{n-i,n} := (\bbJ_h^{n,n-i})^{-1}$ for $1\leq i\leq k$. Then $\|\Bw_{k,h}^n\|_{\BL^{\infty}(\Omega_{\eta}^n)}\lesssim \N{\Bw^{n}_{k}}_{\BH^{r+1/2}(\Omega^n)}$ and
	\begin{align}
		&\|\bbJ_h^{n,n-i}-\bbI\|_{\bbL^{\infty}(\tilde\Omega^n)} 
		\lesssim \tau , \hspace{19mm}
		\Det (\bbJ_h^{n,n-i}) = 1+O(\tau), \label{eq:Jacobin Xh}\\
		&\|{\bbJ_h^{n-i,n}-\bbI}\|_{\bbL^\infty(\BX_h^{n,n-i}(\Omega_\eta^n))}
		\lesssim \tau, \hspace{12mm}
		\Det (\bbJ_h^{n-i,n}) = 1+O(\tau). \label{eq:inverse Jacobin Xh}
	\end{align}
\end{corollary}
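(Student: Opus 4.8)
The plan is to reduce both \eqref{eq:Jacobin Xh} and \eqref{eq:inverse Jacobin Xh} to the one-step estimates of Lemma~\ref{lem:Xh} via the chain rule and the elementary algebra of matrices that are small perturbations of the identity. For the forward Jacobian I would differentiate the recursion \eqref{def:Xh n-i}: splitting off the last step gives
\[
\bbJ_h^{n,n-i} = \big(\bbJ_h^{n-1,n-i}\circ \BX_h^{n,n-1}\big)\,\bbJ_h^{n,n-1},
\]
where $\bbJ_h^{n-1,n-i}\circ \BX_h^{n,n-1}$ denotes $\bbJ_h^{n-1,n-i}$ evaluated at $\BX_h^{n,n-1}(\cdot)$. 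Applying Lemma~\ref{lem:Xh} at each intermediate time level (the lemma holds verbatim with $n$ shifted), each one-step factor is $\bbJ_h^{m,m-1} = \bbI + \bbE_m$ with $\|\bbE_m\|_{\bbL^\infty} \lesssim \tau$ uniformly. Since the composition \eqref{def:Xh n-i} involves at most $k$ one-step maps and $k$ is a fixed number, induction on $i$ (or expanding the ordered product $\prod_m(\bbI+\bbE_m)$) shows that all cross terms are of order $\tau^2$ or higher, whence $\|\bbJ_h^{n,n-i}-\bbI\|_{\bbL^\infty(\tilde\Omega^n)}\lesssim\tau$. The determinant estimate follows from multiplicativity, $\Det(\bbJ_h^{n,n-i}) = \prod_m \Det(\bbJ_h^{m,m-1})$, together with the two-dimensional identity $\Det(\bbI+\bbE_m)=1+\mathrm{tr}\,\bbE_m+\Det \bbE_m = 1+O(\tau)$, so that the product of at most $k$ such factors is again $1+O(\tau)$. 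For the inverse \eqref{eq:inverse Jacobin Xh}, once $\tau$ is small enough that $\bbE:=\bbJ_h^{n,n-i}-\bbI$ has norm below one, the Neumann series gives $\bbJ_h^{n-i,n}=(\bbI+\bbE)^{-1}=\bbI-\bbE+\bbE^2-\cdots=\bbI+O(\tau)$ uniformly on $\BX_h^{n,n-i}(\Omega_\eta^n)$, and $\Det(\bbJ_h^{n-i,n})=1/\Det(\bbJ_h^{n,n-i})=1/(1+O(\tau))=1+O(\tau)$.

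For the velocity bound I would first record the identity $(l_n^i)'(t_n)=\lambda_i^k/\tau$, so that \eqref{eq: ALE Ah} yields $\Bw_{k,h}^n(\cdot,t_n)=\tau^{-1}\sum_{i=0}^k\lambda_i^k\BX_h^{n,n-i}$, with the analogous expression holding for the continuous velocity $\Bw_k^n$ in terms of the extensions $\tilde\BX^{n,n-i}$. Because the BDF-$k$ coefficients in Table~\ref{tab:BDF} satisfy $\sum_i\lambda_i^k=0$, subtracting gives
\[
\Bw_{k,h}^n-\Bw_k^n = \frac{1}{\tau}\sum_{i=0}^k\lambda_i^k\big(\BX_h^{n,n-i}-\tilde\BX^{n,n-i}\big).
\]
Invoking the multi-step displacement estimate $\|\BX_h^{n,n-i}-\tilde\BX^{n,n-i}\|_{\BL^\infty}\lesssim\tau^{k-1/2}$ (the multi-step analogue of the second bound in Lemma~\ref{lem:Xh}), this difference is $O(\tau^{k-3/2})$, which is a vanishing higher-order perturbation for $k\ge2$. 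Combined with the Sobolev embedding $\BH^{r+1/2}(\Omega^n)\hookrightarrow\BL^\infty(\Omega^n)$, valid since $r+1/2>1$ in two dimensions, applied to $\Bw_k^n$, this controls $\|\Bw_{k,h}^n\|_{\BL^\infty(\Omega_\eta^n)}$ by $\|\Bw_k^n\|_{\BH^{r+1/2}(\Omega^n)}$.

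The main obstacle is the multi-step displacement estimate $\|\BX_h^{n,n-i}-\tilde\BX^{n,n-i}\|_{\BL^\infty}\lesssim\tau^{k-1/2}$ used in the velocity bound, since Lemma~\ref{lem:Xh} supplies only the one-step case. To propagate it through \eqref{def:Xh n-i} I would write
\[
\BX_h^{n,n-i}-\tilde\BX^{n,n-i}=\BX_h^{n-1,n-i}\circ\BX_h^{n,n-1}-\tilde\BX^{n-1,n-i}\circ\tilde\BX^{n,n-1},
\]
insert the mixed term $\tilde\BX^{n-1,n-i}\circ\BX_h^{n,n-1}$, and estimate the two resulting differences separately: the first is controlled by the uniform Lipschitz constant of $\tilde\BX^{n-1,n-i}$ (itself furnished by the Jacobian bound \eqref{eq:Jacobin Xh}) times the inductive multi-step error at level $n-1$, and the second is the one-step error of Lemma~\ref{lem:Xh}. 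Iterating over the at most $k$ levels, each contributing only an $O(1)$ Lipschitz factor, preserves the order $\tau^{k-1/2}$. By contrast, the Jacobian and determinant parts \eqref{eq:Jacobin Xh}--\eqref{eq:inverse Jacobin Xh} are essentially self-contained once Lemma~\ref{lem:Xh} and Lemma~\ref{lem:Ah} are in hand.
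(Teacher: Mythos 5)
Your proposal is correct and follows essentially the same route as the paper: the Jacobian and determinant bounds \eqref{eq:Jacobin Xh}--\eqref{eq:inverse Jacobin Xh} are obtained by composing the one-step estimates \eqref{Xh} of Lemma~\ref{lem:Xh} (the paper simply calls them ``direct consequences''), and the velocity bound comes from comparing $\Bw_{k,h}^n$ with the Sobolev extension $\tilde\Bw_k^n=\tau^{-1}\sum_{i}\lambda_i^k\tilde\BX^{n,n-i}$, exactly as in the paper's \eqref{eq:bound of Bw}, followed by a triangle inequality and the embedding $\BH^{r+1/2}\hookrightarrow\BL^\infty$. The extra details you supply (the ordered-product expansion, the Neumann series, and the telescoping argument promoting the one-step displacement estimate to the multi-step bound $\|\BX_h^{n,n-i}-\tilde\BX^{n,n-i}\|_{\BL^\infty}\lesssim\tau^{k-1/2}$) are precisely the steps the paper leaves implicit, so they strengthen rather than deviate from its argument.
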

\begin{proof}
	Thanks to \eqref{eq:delta omega} and Lemma~\ref{lem:Xh}, we have
	$\BX_h^{n,n-j}(\Omega_\eta^{n}) \subset \tilde{\Omega}^{n-j}$ for $1\leq j<k$.
	So \eqref{def:Xh n-i} is well defined.
	The proofs of \eqref{eq:Jacobin Xh}--\eqref{eq:inverse Jacobin Xh} are direct consequences of \eqref{Xh}. The stability of Sovolev extension shows that
	$\tilde \Bw^{n}_k = \frac{1}{\tau}\sum\limits_{i=0}^k \lambda_i^k \tilde\BX^{n,n-i}$ satisfies
	\begin{equation*}
		\tilde\Bw^{n}_k|_{\Omega^n} = \Bw^{n}_k,\qquad
		\|\tilde \Bw^{n}_k\|_{\BH^{r+1/2}(\tilde\Omega^n)} 
		\lesssim \| \Bw^{n}_k\|_{\BH^{r+1/2}(\Omega^n)}.
	\end{equation*}
	Using Lemma~\ref{lem:Xh}, we have
	\begin{equation}\label{eq:bound of Bw}
		\|\Bw_{k,h}^n-\tilde\Bw^{n}_k\|_{\BL^\infty(\Omega_{\eta}^n)} \lesssim
		\tau^{-1}\sum_{i=0}^k
		\|\BX_h^{n,n-i}-\tilde\BX^{n,n-i}\|_{\BL^{\infty}(\Omega_{\eta}^n)}
		\lesssim \tau^{k-3/2}.
	\end{equation}
	The proof is finished by the triangle inequality.
\end{proof}

Finally, we present estimates of the discrete ALE mapping
by using Lemma~\ref{lem:Xh} and Corollary~\ref{corol:bound of Jh}.
\begin{lemma}\label{lem:Uh}
	Assuming $\BX_h^{n,n-1}$ satisfies \eqref{Xh}
	and $\tau = O(h)$,
	there exits a constant $C$ independent of $n,\tau$ such that, for any $v_h\in V(k,\Ct_h^{n-i})$, and $\mu=0,1$,
	\begin{align}\label{eq:UX}
		\|{\nabla^{\mu}
			\big(v_h \circ \BX_h^{n,n-i}\big)}\|^2_{L^2(\Omega_\eta^n)}&\leq\;  
		\|{\nabla^{\mu}(v_h \circ \BX_h^{n-l,n-i})}\|^2_{L^2(\Omega_\eta^{n-l})}
		+Ch \|\nabla^{\mu} v_h\|_{L^2(\tilde{\Omega}^{n-i})}^2,\\
		\|{v_h\circ \BX_h^{n,n-1}}\|^2_{L^2(\Gamma_\eta^n)}&\leq 
		(1+C\tau)\NLtwo[\Gamma_\eta^{n-1}]{v_h}^2 +C \tau^{k-1}\NHone[\tilde{\Omega}^{n-1}]{v_h}^2, \label{eq:L2 trace}\\
		\big|{v_h\circ \BX_h^{n,n-1}}\big|^2_{H^1(\Gamma_\eta^n)} &\lesssim h^{-1}
		\SNHone[\tilde \Omega^{n-1}]{v_h}^2. \label{eq:H1 trace}
	\end{align}
\end{lemma}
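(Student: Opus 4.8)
The three inequalities are all change-of-variables estimates for composition with the discrete ALE maps, so the plan is to derive each of them from the Jacobian bounds of Corollary~\ref{corol:bound of Jh} together with the $L^\infty$-closeness estimate \eqref{Xh} of Lemma~\ref{lem:Xh}, under the standing hypothesis $\tau = O(h)$.

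For the volume bound \eqref{eq:UX}, I would first use the multi-step factorization \eqref{def:Xh n-i} to write $v_h\circ\BX_h^{n,n-i} = w_h\circ\BX_h^{n,n-l}$ with $w_h := v_h\circ\BX_h^{n-l,n-i}$, and then change variables $\By=\BX_h^{n,n-l}(\Bx)$ on $\Omega_\eta^n$. By Corollary~\ref{corol:bound of Jh} the determinant $\Det\bbJ_h^{n-l,n}$ equals $1+O(\tau)$, and for $\mu=1$ the chain rule $\nabla(w_h\circ\BX_h^{n,n-l}) = (\bbJ_h^{n,n-l})^\top(\nabla w_h)\circ\BX_h^{n,n-l}$ produces the bounded factor $\bbJ_h^{n,n-l}=\bbI+O(\tau)$; since $\tau=O(h)$ these multiplicative deviations are absorbed into the $Ch$ remainder. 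This reduces the left-hand side to $(1+O(h))\int_{\BX_h^{n,n-l}(\Omega_\eta^n)}|\nabla^\mu w_h|^2$.

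The delicate point is that $\BX_h^{n,n-l}(\Omega_\eta^n)$ does not coincide with $\Omega_\eta^{n-l}$; by construction the two domains share approximately the same boundary, and \eqref{Xh} controls their symmetric difference by a thin layer near $\Gamma_\eta^{n-l}$. I would split the integral into the part over $\Omega_\eta^{n-l}$, which gives the first term on the right of \eqref{eq:UX}, and the part over the thin layer, on which $|\nabla^\mu w_h|^2$ is transported back to $|\nabla^\mu v_h|^2$ (again via the Jacobian bounds) and absorbed into $Ch\|\nabla^\mu v_h\|_{L^2(\tilde\Omega^{n-i})}^2$ using the inverse/trace estimates behind \eqref{norm-eq0} and \eqref{eq:infty to H1}. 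Converting the $L^\infty$ map discrepancy of \eqref{Xh} into the claimed $h$-order volume error, rather than a cruder bound, is the main obstacle and the step that must be carried out with care.

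For the trace estimates I would change variables along the curve. Mapping $\Gamma_\eta^n$ by $\BX_h^{n,n-1}$ onto its image $\tilde\Gamma:=\BX_h^{n,n-1}(\Gamma_\eta^n)$, the induced surface Jacobian is $1+O(\tau)$ by the tangential part of \eqref{Xh}, so $\|v_h\circ\BX_h^{n,n-1}\|_{L^2(\Gamma_\eta^n)}^2 \le (1+C\tau)\int_{\tilde\Gamma}|v_h|^2$. It then remains to compare $\int_{\tilde\Gamma}|v_h|^2$ with $\|v_h\|_{L^2(\Gamma_\eta^{n-1})}^2$: since $\tilde\Gamma$ lies within distance $O(\tau^{k-1/2})$ of $\Gamma_\eta^{n-1}$ by \eqref{Xh}, transporting $v_h$ between the two curves costs a gradient term, and combining $\tau^{k-1/2}$ with a trace inverse estimate (an $h^{-1/2}$ factor) and $\tau=O(h)$ yields the remainder $C\tau^{k-1}\|v_h\|_{H^1(\tilde\Omega^{n-1})}^2$, giving \eqref{eq:L2 trace}. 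For \eqref{eq:H1 trace} I would differentiate tangentially along $\Gamma_\eta^n$, pick up the bounded factor $\bbJ_h^{n,n-1}$, and pass from the surface $H^1$-seminorm on $\tilde\Gamma$ to the volume $H^1$-seminorm on $\tilde\Omega^{n-1}$ through the trace inverse inequality, which supplies the $h^{-1}$.
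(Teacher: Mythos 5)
Your route is the one the paper intends: the paper in fact prints no proof of this lemma at all, introducing it only with the sentence that the estimates follow ``by using Lemma~\ref{lem:Xh} and Corollary~\ref{corol:bound of Jh}'', and those are exactly the tools you deploy (change of variables, $\bbJ_h^{n,n-i}=\bbI+O(\tau)$ and $\Det(\bbJ_h^{n,n-i})=1+O(\tau)$, thin-layer control of the mismatch between $\BX_h^{n,n-l}(\Omega_\eta^n)$ and $\Omega_\eta^{n-l}$, and trace/inverse inequalities). Your bookkeeping for the two boundary estimates is sound: the surface Jacobian factor $1+O(\tau)$ gives the leading term of \eqref{eq:L2 trace}; the displacement between $\BX_h^{n,n-1}(\Gamma_\eta^n)$ and $\Gamma_\eta^{n-1}$ is $O(\tau^{k-1/2})$ by \eqref{Xh}, so the correction is of order $\tau^{2k-1}\cdot h^{-1}$ (square of the displacement times a trace-inverse factor), and even after a Young inequality with parameter $\tau$ this is $\tau^{2k-3}\le\tau^{k-1}$ for $k\ge 2$; and \eqref{eq:H1 trace} is exactly tangential differentiation plus the trace-inverse inequality, using that the image curve stays inside $\tilde\Omega^{n-1}$.

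The one step that deserves scrutiny is the one you flag yourself. Carried out with the tools you name, the layer term in \eqref{eq:UX} comes out as follows: the image layer has width $d=O(\tau^{k-1/2})$, and after transporting $|\nabla^\mu w_h|^2$ back to $|\nabla^\mu v_h|^2$, the per-element $L^\infty$ inverse estimate gives
\begin{equation*}
	\int_{\text{layer}}\SN{\nabla^\mu v_h}^2
	\lesssim \frac{d}{h}\,\|\nabla^\mu v_h\|^2_{L^2(\tilde\Omega^{n-i})}
	\sim h^{k-3/2}\,\|\nabla^\mu v_h\|^2_{L^2(\tilde\Omega^{n-i})}.
\end{equation*}
This is $\le Ch$ only for $k\ge 3$; at $k=2$, which the paper's standing assumption $2\le k\le 4$ allows, it yields only $Ch^{1/2}$, short of the claimed remainder. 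The same exponent appears if one instead uses the total layer area $O(\tau^{k+1/2})$ with a global $L^\infty$ inverse estimate, so this is not a defect of your particular covering argument but a limit of what \eqref{Xh} provides. It does not damage the paper's downstream analysis, since the stability theorem is proved for $k=4$, where $h^{k-3/2}=h^{5/2}\ll h$; but your proof, like any proof resting solely on \eqref{Xh}, establishes \eqref{eq:UX} as stated only for $k\ge 3$. For $k=2$ one would need something sharper, e.g.\ an $L^2(\Gamma_\eta^n)$-based bound of order $\tau^{k+1/2}$ on $\BX_h^{n,n-1}-\tilde\BX^{n,n-1}$ combined with a finer localization. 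Since the paper omits the proof entirely, it is silent on this borderline case as well; your write-up at least makes the obstruction visible.
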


\section{The stability of numerical solutions}
\label{sec:The proof of the stability for the discrete solutions}

Since the computational domain is time-varying, we have to deal with the issue 
that $u^{n-j}_h\notin V(k,\Ct^n_h)$ for $1\le j\le k$ 
when proving the stability and convergence of the discrete solution. 
To overcome this difficulty,
we introduce a modified Ritz projection operator $\Cp^n_h:Y(\Omega^n_{\eta})\to V(k,\Ct^n_h)$  proposed in \cite{ma21} which 
satisfies
\begin{align}	\label{eq:Pnh}
	\mathscr{A}^n_h(\Cp^n_hw,v_h)  =  a^n_h(w,v_h) \qquad
	\forall\, v_h\in V(k,\Ct^n_h),
\end{align}
where $Y(\Omega_\eta^{n}) := \big\{v\in \Hone[\Omega^n_{\eta}]:\TN{v}_{\Omega^n_{\eta}}<\infty\big\}$.
It satisfies
\begin{align}\label{Rw-0} 
	&\TN{\Cp_h^n w}_{\Omega_\eta^n}
	\lesssim \TN{w}_{\Omega_\eta^n},\qquad 
	\NLtwo[\Omega^n_{\eta}]{w-\Cp^n_h w}
	\lesssim  h \TN{w}_{\Omega^n_{\eta}},\quad
	\forall w\in Y(\Omega^n_{\eta}).
\end{align}
\subsection{Proof of Theorem~\ref{thm:uh-stab}}
\begin{proof}
	We only prove the theorem for $k=4$.
	The proofs for other cases are similar.
	The rest of the proof is parallel to the proof of \cite[Theorem~5.2]{ma21},
	we just sketch the proof and omit details. 
	
	\textbf{Step1: Selection of a particular test function.}
	Write $\tilde{U}^{n-1,n}_h=\Cp^n_h(U^{n-1,n}_h)$.
	Since $U^{n-1,n}_h\notin V(k,\Ct^n_h)$, we choose $v_h=2u^n_h -\tilde U^{n-1,n}_h$ as a test function, the discrete problem for $k=4$ has the form
	\begin{equation}  
		\sum_{i=1}^5 
		\|\Psibf_i^4 (\underline{\BU_h^{n}})\|_{L^2(\Omega^n_{\eta})}^2 
		- \sum_{i=1}^4 
		\|\Phibf_i^4 (\underline{\BU_h^{n}})\|^2_{L^2(\Omega^n_{\eta})} 
		+ \tau \mathscr{A}^n_h(u_h^n,v_h)+\tau \mathscr{B}_h^n(u_h^n,v_h)
		= \tau R^n_1 +R^n_2,   \label{eq:stab0}
	\end{equation}
	where $\mathscr{B}_h^n(u_h^n,v_h) = (-\nabla \Bw_{k,h}^n\cdot \nabla u_h^n,v_h)_{\Omega_{\eta}^n}$, 
	we have used \cite[Section~2 and Appendix~A]{liu13}, and 
	\begin{align*}
		&\Psibf^4_i \big(\ul{\BU^{n}_h}\big) =\sum_{j=1}^{i} c_{i,j}^4 U_h^{n+1-j,n},\quad
		\Phibf_i^4 \big(\ul{\BU^{n}_h}\big)= \sum_{j=1}^{i} c_{i,j}^4 U_h^{n-j,n},\\
		&R^n_1=\big(f^n,2u^n_h -\tilde U^{n-1,n}_h\big)_{\Omega^n_{\eta}},\quad
		R^n_2 = \big(\Lambda^4\ul{\BU_h^{n}},\tilde U^{n-1,n}_h - U^{n-1,n}_h\big)_{\Omega^n_{\eta}}.
	\end{align*}
	The trace inequality and inverse estimates show $
	\|\partial_\Bn u^n_h\|_{L^2(\Gamma^n_{\eta})} 
	\lesssim h^{-\frac 12}|u^n_h|_{H^1(\tilde{\Omega}^n)}$. For any $\varepsilon\in (0,1)$, 
	by the definitions of $\Cp^n_h$ and the Cauchy-Schwarz inequality, one has
	\begin{align}
		&\mathscr{A}^n_h(u_h^n,\,2u_h^n-\tilde U^{n-1,n}_h) 
		=\,2\mathscr{A}^n_h(u_h^n,u_h^n)
		-a^n_h(u_h^n, U^{n-1,n}_h)  \notag \\
		&\,\ge \frac{3}{2}\TN{u_h^n}^2_{\Ct_h^n} 
		-\frac{5}{2\varepsilon h}
		\|{u^{n}_h}\|^2_{L^2(\Gamma^{n}_\eta)}
		-C\varepsilon|{u^n_h}|^2_{H^1(\tilde{\Omega}^n)} 
		-\frac{1}{2}|{U^{n-1,n}_h}|^2_{H^1(\Omega^{n}_\eta)}
		-\frac{\varepsilon h}{2} |{U_h^{n-1,n}}|^2_{H^1(\Gamma_\eta^n)}
		- \frac{\gamma_0+\varepsilon^{-1}}{2h}
		\|{U^{n-1,n}_h}\|^2_{L^2(\Gamma^{n}_\eta)}.   \label{stab-est1}
	\end{align}		
	Since $\Bw_{k,h}^n$ is  bounded with $C_w=\|\Bw_{k,h}^n\|_{\BL^{\infty}(\Omega_{\eta}^n)}$ and
	$U^{n-1,n}_h\in Y(\Omega^n_\eta)$,  
	we infer from
	\eqref{Rw-0} and the Cauchy-Schwarz inequality that
	\begin{align}
		\mathscr{B}_h^n(u_h^n,v_h)\leq\,& 
		\frac{|{u_h^n}|^2_{H^1(\Omega_\eta^n)}}{4}+
		8 C_{w}^2\big(\|{u_h^n}\|^2_{L^2(\Omega_\eta^n)}
		+\|{U_h^{n-1,n}}\|^2_{L^2(\Omega_\eta^n)}
		+Ch^2\||{U_h^{n-1,n}}\||_{\Omega_\eta^n}^2\big),\notag  \\
		R^n_1\le\, & 2\|{f^n}\|^2_{L^2(\Omega^n_\eta)} 
		+\|{u^n_h}\|^2_{L^2(\Omega^n_\eta)}
		+\|{U^{n-1,n}_h}\|^2_{L^2(\Omega^n_\eta)}
		+C h^2\||{U^{n-1,n}_h}\||_{\Omega^n_\eta}^2,\notag \\
		R^n_2 \le\, & C \tau \varepsilon^{-1}\sum_{j=0}^4
		\|{U^{n-j,n}_h}\|^2_{L^2(\Omega^n_{\eta})}
		+\frac{\tau \varepsilon}{4}\||{U^{n-1,n}_h}\||_{\Omega^n_\eta}^2. \notag 
	\end{align}

	\textbf{Step2: Estimate of $\Phi_i^4(\ul{\BU_h^n})$.}
	We deduce from the inverse estimates and Lemma~\ref{lem:Uh} that
	\begin{align} 
		\|{\nabla^\mu U^{n-j,n}_h}\|^2_{\BL^2(\Omega^{n}_\eta)}
		\le\,&  
		\|{\nabla^\mu u^{n-j}_h}\|^2_{\BL^2(\Omega^{n-j}_\eta)}
		+C\tau 
		\|{\nabla^\mu u^{n-j}_h}\|^2_{\BL^2(\tilde\Omega^{n-j})}, \quad \mu=0,1, \\
		\|{U^{n-1,n}_h}\|^2_{L^2(\Gamma^{n}_\eta)}
		\le\,& (1+C\tau)\|{u^{n-1}_h}\|^2_{L^2(\Gamma^{n-1}_\eta)}
		+C\tau^3\|{u^{n-1}_h}\|^2_{H^1(\tilde\Omega^{n-1})},\\
		|{U_h^{n-1,n}}|_{H^1(\Gamma_\eta^n)}\lesssim\,& h^{-1/2}
		\|{u_h^{n-1}}\|_{H^1(\tilde \Omega^{n-1})}. \label{stab-est7}
	\end{align}
	Note that $\Phibf_i^4\big(\underline{\BU_h^n}\big)
	=\Psibf_i^4\big(\underline{\BU_h^{n-1}}\big)\circ \BX_{h}^{n,n-1}$. It is easy to see from Lemma~\ref{lem:Uh} that
	\begin{align}
		\|{\Phibf_i^4(\underline{\BU_h^n})}\|^2_{L^2(\Omega_{\eta}^n)}
		\le\, \|{\Psibf_i^4(\underline{\BU_h^{n-1}})}\|^2_{L^2(\Omega_{\eta}^{n-1})}  
		+ C\tau \sum_{j=1}^i 
		\|{u_h^{n-j}}\|^2_{L^2(\tilde{\Omega}^{n-j})} .
		\label{stab-est0}
	\end{align}
	
	\textbf{Step3: Application of Gronwall's inequality.}
	Substituting \eqref{stab-est1}--\eqref{stab-est0} into \eqref{eq:stab0}, using $O(h)=\tau\le h \ll\varepsilon\ll 1$, $C_w^2 h\leq \varepsilon$,  applying~\eqref{norm-eq0}, 	
	and taking the sum of the inequalities over $4\le n\le m$, we end up with 
	\begin{align*} 
		&\sum_{i=1}^4\|
		{\Psibf_i^4(\underline{\BU_h^m})}\|^2_{L^2(\Omega_\eta^m)}
		+\frac{3}{2}\tau\sum_{n=4}^m\||{u_h^n}\||^2_{\Ct_h^n} \\
		\le\,& \sum_{n=4}^m\tau
		(1+C\varepsilon) \||u_h^n\||_{\Ct_h^n}^2 
		+  \sum_{n=4}^m\tau
		\Big[ C\|{u^n_h}\|^2_{L^2(\Omega^n_\eta)} +2\|{f^n}\|^2_{L^2(\Omega^n_\eta)}
		+\big(\frac{6+C\tau}{2\varepsilon h}-\frac{\gamma_0}{2h}\big)
		\|u_h^{n}\|_{L^2(\Gamma_{\eta}^n)}^2 \Big]  
		+C_0,
	\end{align*}
	after careful calculation,
	where $C_0$ is related to the initial solutions, $C_0= C\sum_{i=0}^3\tau\||{u^{i}_h}\||^2_{\Ct_h^i} +\sum_{i=1}^4\|{\Psibf_i^{4}(\underline{\BU_h^3})}\|^2_{L^2(\Omega^3_\eta)}$.
	From \cite[Table~2.2]{liu13}, we know $\Psibf_1^{4}\big(\underline{\BU_h^{m}}\big)=c^4_{1,1}u^m_h$ ($c_{1,1}^4>0$). 
	Finally, we choose $\varepsilon$ small enough such that 
	$C\varepsilon <1/2$ 
	and $\gamma_0$ large enough 
	such that $\gamma_0 \geq (6+C\tau)/\varepsilon$. 
	Then the proof is finished 
	by using Gronwall's inequality.
\end{proof}

\end{appendix}

\end{document}